\newtheorem{theorem}{Theorem}[section]
\newtheorem{lemma}[theorem]{Lemma}
\newtheorem{corollary}[theorem]{Corollary}
\newtheorem{proposition}[theorem]{Proposition}
\newtheorem{claim}[theorem]{Claim}
\theoremstyle{definition}
\newtheorem{definition}[theorem]{Definition}
\newtheorem{notation}[theorem]{Notation}
\theoremstyle{remark}
\numberwithin{equation}{section}
\renewcommand\bigskip{\medskip}
\def\to{\rightarrow}
\def\cF{\mathcal{F}}
\def\N{\mathbb N}
\def\cT{\mathcal{T}}
\def\Q{\mathbb Q}
\def\R{\mathbb R}
\def\C{\mathbb C}
\def\RP{\mathbb{RP}^1}
\def\Z{\mathbb Z}
\def\cP{\mathcal{P}}
\def\cF{\mathcal{F}}
\def\dlp{d_{\rm LP}}
\def\cE{\mathcal{E}}
\def\cN{\mathcal{N}}
\def\mub{\bar{\mu}}
\def\-1{^{-1}}
\def\essinf{{{\rm ess}\inf}}
\def\bmu{\bar{\mu}}
\def\bnu{\bar{\nu}}
\newcommand{\ifi}{\mathtt{a}}
\newcommand{\io}{\mathtt{i}}
\newcommand{\ko}{\mathtt{k}}
\newcommand{\jo}{\mathtt{j}}
\newcommand{\jfi}{\mathtt{b}}
\DeclareMathOperator{\diam}{diam}
\DeclareMathOperator{\dimh}{\dim_H}
\begin{document}

\title{Resonance between planar self-affine measures}
\author{Aleksi Pyörälä}
\address
        {Department of Mathematics and Statistics \\ 
        P.O.\ Box 35 (MaD) \\ 
        FI-40014 University of Jyväskylä \\ 
         Finland}
\email{aleksi.pyorala@gmail.com}
\thanks{The research of this project was conducted as part of the author's doctoral studies at University of Oulu, and has been partly supported by the Research Council of Finland via the project \emph{GeoQuantAM: Geometric and Quantitative Analysis on Metric spaces}, grant no. 354241. I thank Ville Suomala and Meng Wu for their comments on an early version of the paper, Ariel Rapaport and the anonymous referee for many useful comments and suggestions that led to improved presentation of the paper, and Antti Käenmäki for pointing out an error in the proof of Proposition \ref{claim-ergodicproduct}.}
\subjclass[2020]{Primary 28A80; Secondary 37A10}
\keywords{Self-affine measures, Hausdorff dimension, convolution of measures, resonance}
\begin{abstract}
    We show that if $\lbrace \varphi_i\rbrace_{i\in \Gamma}$ and $\lbrace \psi_j\rbrace_{j\in\Lambda}$ are self-affine iterated function systems on the plane that satisfy strong separation, domination and irreducibility, then for any associated self-affine measures $\mu$ and $\nu$, the inequality $$\dim_{\rm H}(\mu*\nu) < \min \lbrace 2, \dim_{\rm H} \mu + \dim_{\rm H} \nu \rbrace$$ implies that there is algebraic resonance between the eigenvalues of the linear parts of $\varphi_i$ and $\psi_j$. This extends to planar non-conformal setting the existing analogous results for self-conformal measures on the line. 
\end{abstract}
\maketitle
\section{Introduction}\label{section1}
In the 1960s, Furstenberg conjectured that if $X,Y\subseteq [0,1]$ are closed sets invariant under multiplication by integers $m$ and $n$, respectively, then for any $s\neq 0$, the inequality
\begin{equation}\label{eq-dimensiondrop}
\dim(X+sY) < \min \lbrace 1,\dim X+\dim Y\rbrace
\end{equation}
implies that $\frac{\log m}{\log n} \in \Q$. Here and in the following, $\dim$ denotes the (lower) Hausdorff dimension for both sets and measures. This conjecture was one of several that aimed to capture the idea that if $\frac{\log m}{\log n}\not\in\Q$, then expansions in base $m$ and $n$ should have no common structure: Indeed, the right-hand side of \eqref{eq-dimensiondrop} is always an upper bound for $\dim(X+sY)$, while the strict inequality \eqref{eq-dimensiondrop} implies that many of the fibers $\lbrace (x,y)\in X\times sY:\ x+y=z\rbrace$ are large, which means that $X$ and $Y$ should have arithmetically similar structure in many places. The phenomenon \eqref{eq-dimensiondrop} is usually referred to as resonance: $X$ and $Y$ are said to \emph{resonate} if they satisfy \eqref{eq-dimensiondrop} for some $s$, and otherwise they are said to \emph{dissonate}.

It is also natural to ask if a similar phenomenon holds in a more general setting: For dynamical systems $X$ and $Y$, does \eqref{eq-dimensiondrop} imply some kind of algebraic or arithmetic similarity between the sets or the dynamics? The first result in this direction is due to Moreira \cite{Moreira1998} from 1998, who proved that for two self-conformal sets on the line, \eqref{eq-dimensiondrop} cannot hold in the presence of an irrationality assumption if one of the sets is totally non-linear. Recall that a set $K\subseteq \R$ is called self-conformal if 
\begin{equation}\label{eq-selfsimilar}
K = \bigcup_{i=1}^m f_i(K)
\end{equation}
for some $C^{1+\varepsilon}$-contractions $f_i$. 

In 2009, Peres and Shmerkin \cite{PeresShmerkin2009} established analogous results for sums of self-similar sets on the line: The dimension of the sum is maximal unless the contraction ratios of the defining similarities form an \emph{arithmetic set}. A set $A\subseteq \R$ is called arithmetic if $A\subseteq \alpha\N$ for some $\alpha\in \R$. Recall that a set is self-similar if it satisfies \eqref{eq-selfsimilar} with $f_i$ being similarities. An analogous result for convolutions of Cantor measures on the line was obtained shortly after by Nazarov, Peres and Shmerkin \cite{NazarovPeresShmerkin2012}: Indeed, by replacing sum with convolution in \eqref{eq-dimensiondrop}, one can formulate the concept of resonance for measures. 

A major breakthrough on the topic of resonance between dynamical systems was achieved by Hochman and Shmerkin in 2012 \cite{HochmanShmerkin2012}, who introduced a powerful method called the \emph{local entropy averages} to attack problems regarding projections (and therefore sums) of dynamically defined fractals. Hochman and Shmerkin managed to both prove the original conjecture of Furstenberg, and extend to the setting of measures the existing results on the sums of self-similar and self-conformal sets on the line. 

Namely, they proved that if $\lbrace f_i\rbrace_{i\in\Gamma}$ and $\lbrace g_j\rbrace_{j\in\Lambda}$ are families of $C^{1+\varepsilon}$-contractions on $\R$ that satisfy the open set condition, then for any associated self-conformal measures $\mu$ and $\nu$, 
\begin{equation}\label{eq-selfconformalresonance}
\dim(\mu*\nu) = \min \lbrace 1,\dim \mu+\dim\nu\rbrace
\end{equation}
unless the asymptotic contraction ratios of $\phi_i$ and $\psi_j$ form an arithmetic set. Recall that $\mu$ is self-conformal if 
\begin{equation}\label{eq-selfconformalmeasure}
\mu = \sum_{i\in\Gamma} p_i \cdot \mu\circ f_i^{-1}
\end{equation}
for a probability vector $(p_i)_{i\in\Gamma}$ and $f_i$ as above. Due to well-known variational principles, the result for self-conformal measures implies the result for sets as well. Recently, an analogue of \eqref{eq-selfconformalresonance} was proven by Bruce and Jin \cite{BruceJin2022} for a rich class of measures on homogeneous self-similar sets without requiring any separation conditions, in particular measures satisfying \eqref{eq-selfconformalmeasure} with $f_i$ being similarities. For self-conformal measures, \eqref{eq-selfconformalresonance} was recently verified without requiring any separation conditions by Bárány, Käenmäki, Wu and the author \cite{BaranyKaenmakiPyoralaWu2023}.

Perhaps surprisingly, almost nothing seems to be known of this phenomenon in higher dimensions. While there certainly exists literature on bounding the size of sums or convolutions from below, such as the famous inverse theorem of Hochman \cite{Hochman2015, Hochman2014}, it primarily focuses on showing that, for very general $X$ and $Y$, the sum (or convolution) $X+Y$ is \emph{strictly larger} than $X$, unless $X$ and $Y$ have a very special structure. See also \cite{RossiShmerkin2020, Fraser2022} and the references therein for progress in related phenomena. However, the existing results do not aim to capture the spirit of the phenomenon predicted by Furstenberg, that ``geometric resonance'' of dynamically defined sets should imply a kind of ``algebraic resonance'' between the dynamics. 

The purpose of the present work is to provide an extension of this principle to the planar setting. However, in formulating an extension beyond the line, one has to be careful: The direct extension, that 
\begin{equation}\label{eq-dimensiondropplane}
\dim(X+Y) < \min\lbrace 2, \dim X+ \dim Y\rbrace
\end{equation}
unless there is algebraic resonance between the dynamics of $X$ and $Y$, breaks down easily. Indeed, one can of course isometrically embed any sets $X$ and $Y$ on the line to the plane, and their sum will always have dimension at most $1$. It is also not difficult to construct examples of $X$ and $Y$ with dimension strictly greater than one by taking product sets. Thus, in order to expect \eqref{eq-dimensiondropplane} to imply algebraic resonance, one has to assume that $X$ and $Y$ are ``spread out'' in sufficiently many directions, in some sense. 

In this paper, we consider the size of $\mu*\nu$ when $\mu$ and $\nu$ are self-affine measures on the plane, that is, they satisfy \eqref{eq-selfconformalmeasure} with $f_i$ being invertible affine contractions on $\R^2$. Let $\RP$ denote the collection of one-dimensional subspaces of $\R^2$. For a $2 \times 2$-matrix $A$, let $|\lambda_1(A)| \leq |\lambda_2(A)|$ denote its eigenvalues. Let $A$ also denote the action induced by $A$ on $\RP$. We say that an iterated function system (an IFS for short) $\Phi= \lbrace f_i(x) = A_i x + a_i \rbrace_{i\in\Gamma}$ of affine contractions on $\R^2$ satisfies
\begin{itemize}
    \item[1)] the \emph{strong separation condition} if there exists a bounded open set $V \neq \emptyset$ such that for every $i\neq j\in \Gamma$, $f_i({\rm cl}(V)) \subseteq V$ and $f_i({\rm cl}(V)) \cap f_j({\rm cl}(V)) = \emptyset$,
    \item[2)] \emph{hyperbolicity} if there exists $n\in\N$ and $(i_1,\ldots, i_n) \in\Gamma^n$ such that $|\lambda_1(A_{i_1}\ldots A_{i_n})|<|\lambda_2(A_{i_1}\ldots A_{i_n})|$,
    \item[3)]  \emph{total irreducibility} if for every finite set $\Theta\subset\RP$, there exists $i\in\Gamma$ such that $A_i \Theta \neq \Theta$, and
    \item[4)] the \emph{domination condition} if there exists a multicone $\mathcal{C} \subsetneq \RP$, i.e. a finite union of closed cones, such that $A_i \mathcal{C} \subseteq {\rm int}(\mathcal{C})$ for each $i\in\Gamma$. 
\end{itemize}
Under the domination condition, $|\lambda_1(A_i)|<|\lambda_2(A_i)|$ for every $i\in\Gamma$ by \cite[Corollary 2.4]{BaranyKaenmakiMorris2020}, and total irreducibility is equivalent to the a-priori weaker condition of \emph{irreducibility}, that for every $\theta\in\RP$ there exists $i\in\Gamma$ such that $A_i\theta\neq \theta$; see \cite[Lemma 2.10]{BaranyKaenmakiYu2021}. 

\begin{theorem}\label{theorem-main}
Let $\Phi = \lbrace \varphi_i(x) = A_i x + a_i \rbrace_{i \in \Gamma}$ and $\Psi = \lbrace \psi_j(x) = B_j x + b_j \rbrace_{j \in \Lambda}$ be systems of affine contractions on $\R^2$ that satisfy the irreducibility, domination and strong separation conditions. Let $\mu$ and $\nu$ be fully supported self-affine measures associated to $\Phi$ and $\Psi$ with $\dim\mu\geq\dim\nu$. If
$$
\dim (\mu*\nu) < \min \lbrace 2,\dim \mu + \dim \nu \rbrace,
$$
then $\dim\mu>1>\dim\nu$ and $$
\lbrace \log|\lambda_1(A_i)|:\ i \in \Gamma\rbrace \cup \lbrace \log|\lambda_2(B_j)|:\ j \in \Lambda\rbrace
$$ is an arithmetic set.
\end{theorem}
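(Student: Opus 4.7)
The plan is to reduce the planar resonance question to the line via the Ledrappier--Young / Feng--Hu theory for planar self-affine measures, combined with the method of local entropy averages, so that the 1D resonance theorems of Hochman--Shmerkin and Bárány--Käenmäki--Pyörälä--Wu can be applied to a suitable auxiliary convolution. Under hyperbolicity and irreducibility, both $\mu$ and $\nu$ are exact-dimensional, and at typical small scales each resembles an approximate product along its dominated splitting: $\mu$ locally looks like a 1D self-conformal-type measure in the strong-unstable direction $\theta^u_\mu$ (scaling like $|\lambda_2(A_i)|$) times a thinner conditional in the transverse direction $\theta^s_\mu$ (scaling like $|\lambda_1(A_i)|$), and analogously for $\nu$.

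For the unconditional dichotomy $\dim\mu>1>\dim\nu$, I would rule out the two complementary regimes without any algebraic input. If $\dim\mu\leq 1$ (hence $\dim\nu\leq 1$), the projection theory of Bárány--Hochman--Rapaport combined with irreducibility gives that every 1D linear projection of $\mu$ and $\nu$ preserves dimension; the two measures are then ``1D spread in every direction'', and a Fubini-type argument across directions forces the planar convolution to attain the full sum $\dim\mu+\dim\nu$. Dually, if $\dim\nu\geq 1$, both measures are sufficiently ``fat'' that the 2D upper bound is already saturated by a transversality argument. Once $\dim\mu>1>\dim\nu$ has been forced and domination is added, I would decompose $\mu$ along its strong-stable leaves and $\nu$ along its strong-unstable direction; a dimension drop in $\mu*\nu$ then propagates, via Fubini over these slicings, to a dimension drop in the 1D convolution between a transverse slice of $\mu$ (scaling essentially like $|\lambda_1(A_i)|$) and the projection of $\nu$ onto $\theta^u_\nu$ (scaling like $|\lambda_2(B_j)|$). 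The 1D resonance theorem then yields the arithmeticity of $\{\log|\lambda_1(A_i)|:\ i\in\Gamma\}\cup\{\log|\lambda_2(B_j)|:\ j\in\Lambda\}$.

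The main obstacle is the slicing step: the conditional measures of $\mu$ along strong-stable leaves and the line projections of $\nu$ are not self-conformal in any literal sense, so one cannot cite the 1D theorem as a black box. One must either construct an explicit CP-chain for these auxiliary 1D measures and run the inverse-theorem / local entropy averages argument inside the proof, or establish a strong enough approximate self-conformality, quantitatively tied to the contraction ratios $|\lambda_1(A_i)|$ and $|\lambda_2(B_j)|$. The domination hypothesis is essential here, providing Hölder continuity of the Oseledets subspaces over the symbolic coding and thereby the quantitative product structure needed to identify the effective 1D contraction rates with the claimed eigenvalue logarithms. A secondary difficulty is the dual regime $\dim\nu\geq 1$: here one must rule out dimension drop even though both measures are genuinely 2D, which will likely require a combination of projection theorems with a transversality argument for the sum map $(x,y)\mapsto x+y$ acting on $\mu\times\nu$.
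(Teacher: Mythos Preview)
Your high-level architecture matches the paper's: both reduce to local entropy averages, identify the relevant $1$D measures as slices of $\mu$ (scaling by $|\lambda_1(A_i)|$) and projections of $\nu$ (scaling by $|\lambda_2(B_j)|$), and use domination for the regularity of directions. You also correctly flag that these $1$D measures are not self-conformal, so the $1$D resonance theorems cannot be cited as black boxes.

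Where your proposal diverges from the paper, and where there is a genuine gap, is in the mechanism linking the arithmetic condition to the $1$D dissonance. You suggest building a CP-chain or establishing ``approximate self-conformality'', but you do not identify \emph{how} the condition $\log|\lambda_1(A_i)|/\log|\lambda_2(B_j)|\notin\Q$ would actually enter such an argument. The paper's resolution is specific and different: it codes the sequence of rescaled slices of $\mu$ by a suspension flow $\mathcal{Z}_\Phi'$ over the skew product $(\io,\theta)\mapsto(\sigma\io,A_{i_0}^{-1}\theta)$ with roof $-\log\|A_{i_0}|_{A_{i_0}^{-1}\theta}\|$, and similarly codes the projections of $\nu$ by a flow $\mathcal{Z}_\Psi'$. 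By evaluating the roof at periodic points $\io=i^\infty$ and $\jo=j^\infty$, one finds that the eigenvalues of these flows lie in $(\log|\lambda_1(A_i)|)^{-1}\Q$ and $(\log|\lambda_2(B_j)|)^{-1}\Q$ respectively; non-arithmeticity then forces the \emph{product} flow $\mathcal{Z}_\Phi'\times\mathcal{Z}_\Psi'$ to be ergodic. Dissonance on the line follows not from any $1$D inverse theorem but from a soft Marstrand-type argument: convolution of an $S_r$-ergodic family with a fixed exact-dimensional measure achieves the generic projection dimension for $P$-a.e.\ member. This suspension-flow eigenvalue computation is the crux, and your proposal does not contain it or an equivalent.

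A smaller divergence: for the ``easy'' dichotomy you propose two separate mechanisms (projection-preservation plus Fubini when $\dim\mu\le 1$; transversality when $\dim\nu\ge 1$). The paper handles both regimes uniformly by magnifying $\mu$ and $\nu$ along their cylinder ellipses---each then looks like a $1$D projection of dimension $\min\{1,\dim\mu\}$ resp.\ $\min\{1,\dim\nu\}$, supported on lines in different directions by irreducibility---so their convolution has entropy close to the sum via the chain rule. Your ``transversality argument for the sum map'' in the case $\dim\nu\ge 1$ is left unspecified; it is not obvious that such a direct argument exists for general self-affine $\mu,\nu$, whereas the paper's cylinder-magnification route works without further ideas. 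Finally, note that the paper does not ``propagate a dimension drop downward via Fubini'' as you describe; it builds the dimension of $\mu*\nu$ \emph{upward} via the entropy chain rule $H_N(\mu'*\nu')\ge H_N(\pi\mu'*\pi\nu')+H_N(\mu')-H_N(\pi\mu')$, together with a fibre-structure result (Proposition~\ref{prop-parempijono}) identifying $\pi^2\mu^{\io,k}$ with a slice of $\mu$, and dimension conservation to control $H_N(\pi\mu')$.
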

For measures which do not satisfy the domination condition, we have a partial result which holds under a much weaker separation assumption: We say that $\Phi$ satisfies the \emph{exponential separation condition} if there exists a constant $c>0$ and a left-invariant metric $d$ on the group of invertible affine maps on the plane such that for any $n$ and any pair of words $(i_1,\ldots, i_n) \neq (j_1,\ldots, j_n)\in \Gamma^n$, we have $d(\varphi_{i_1}\circ\cdots\circ \varphi_{i_n}, \varphi_{j_1}\circ\cdots \circ \varphi_{j_n})> c^n$. 
\begin{theorem}\label{theorem-main-2}
    Let $\Phi = \lbrace \varphi_i(x) = A_i x + a_i \rbrace_{i \in \Gamma}$ and $\Psi = \lbrace \psi_j(x) = B_j x + b_j \rbrace_{j \in \Lambda}$ be systems of affine contractions on $\R^2$ that satisfy the hyperbolicity, total irreducibility and exponential separation conditions. Let $\mu$ and $\nu$ be fully supported self-affine measures associated to $\Phi$ and $\Psi$. Then
    $$
\dim(\mu*\nu) \geq \min \lbrace 1,\dim\mu\rbrace + \min\lbrace 1,\dim\nu\rbrace.
    $$
\end{theorem}
In particular, if $\dim(\mu*\nu) < \min\lbrace 2,\dim\mu+\dim\nu\rbrace$, then $\dim\mu>1>\dim\nu$ or $\dim\nu>1>\dim\mu$. In the proof of Theorem \ref{theorem-main-2} we do not confront the exponential separation condition directly; We only use it to gain access to a result of Bárány, Hochman and Rapaport \cite{BaranyHochmanRapaport2019}. We refer the reader to the papers \cite{BaranyHochmanRapaport2019, HochmanRapaport2021} for more discussion on the exponential separation condition.

Let us now comment on the assumptions of Theorem \ref{theorem-main}. The assumption of strong separation is classical in the study of iterated function systems, since it makes it possible to view the attractor as a dynamical system, giving access to a multitude of tools from ergodic theory. However, during recent years, much attention has been directed towards establishing existing results without assuming any separation conditions, and we expect it can be removed from our result as well.

The assumption of hyperbolicity ensures that the systems $\Phi$ and $\Psi$ are ``strictly self-affine''. This is crucial in our approach, since strictly self-affine measures have a very special tangential structure that we will heavily use. This structure is formulated in Proposition \ref{prop-parempijono} which is the main technical contribution of this paper, and we believe it can find applications outside this work as well. Without the hyperbolicity assumption, the contractions are similarities up to a change of basis, meaning that the self-affine measures are essentially self-similar. Of course, it would be interesting to find an analogous result for resonance between planar self-similar measures. 

The assumption of total irreducibility is our way to ensure that the measures $\mu$ and $\nu$ are ``spread out'' in sufficiently many directions, which is something one has to assume as explained in the preceding discussion. Without this assumption, it is easy to construct examples for which the conclusion of the theorem does not hold, by e.g. constructing measures on Bedford-McMullen carpets. However, we do not know if it is enough to assume total irreducibility for just one of the systems $\Phi$ and $\Psi$. 

If it happens that $\dim \mu \geq \dim \nu \geq 1$ or $1 \geq \dim \mu \geq \dim \nu$, then the strong separation (even exponential separation), hyperbolicity and total irreducibility are enough to ensure that $\mu*\nu$ has the maximal dimension, by Theorem \ref{theorem-main-2}. The case $\dim \mu > 1 > \dim \nu$ is more delicate, since now $\mu$ might be large enough to ``absorb'' some of $\nu$ in the convolution. However, if the arithmetic conclusion of Theorem \ref{theorem-main} does not hold, then we can use the domination condition to show that the measures $\mu$ and $\nu$ are ``spaced'' in such a manner that this kind of absorption cannot happen. However, it is likely that the requirement for domination is just a by-product of our method.

Finally, we remark that Theorems \ref{theorem-main} and \ref{theorem-main-2} combined with the variational principle of Bárány, Käenmäki and Rossi \cite[Proposition 2.4]{BaranyKaenmakiRossi2021} yield an analogous result for sets:
\begin{corollary}\label{corollary}
Let $\Phi = \lbrace \varphi_i(x) = A_i x + a_i \rbrace_{i \in \Gamma}$ and $\Psi = \lbrace \psi_j(x) = B_j x + b_j \rbrace_{j \in \Lambda}$ be systems of affine contractions on $\R^2$ that satisfy the domination, irreduciblity, and strong separation conditions. Let $X$ and $Y$ denote the attractors of $\Phi$ and $\Psi$, and suppose that $\dim X \geq \dim Y$. If
$$
\dim (X+Y) < \min \lbrace 2,\dim X  + \dim Y\rbrace,
$$
then $\dim X > 1 > \dim Y$ and there exists $n\in\N$ such that   
$$
\lbrace \log|\lambda_1(A_{i_1}\ldots A_{i_n})|:\ (i_1,\ldots,i_n) \in \Gamma^n\rbrace \cup \lbrace \log|\lambda_2(B_{j_1}\ldots B_{j_n})|:\ (j_1,\ldots,j_n) \in \Lambda^n\rbrace
$$
is an arithmetic set.
\end{corollary}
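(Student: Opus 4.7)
The plan is to reduce the corollary to Theorem \ref{theorem-main} by combining the variational principle of Bárány, Käenmäki and Rossi with the passage to $n$-fold iterated systems. By \cite[Proposition 2.4]{BaranyKaenmakiRossi2021}, for every $\varepsilon > 0$ there exist $n \in \N$ and fully supported self-affine measures $\mu, \nu$ associated with
$$
\Phi^n = \lbrace \varphi_{i_1}\circ\cdots\circ\varphi_{i_n} \rbrace_{(i_1,\ldots,i_n)\in\Gamma^n}, \qquad \Psi^n = \lbrace \psi_{j_1}\circ\cdots\circ\psi_{j_n}\rbrace_{(j_1,\ldots,j_n)\in\Lambda^n}
$$
with $\dim \mu > \dim X - \varepsilon$ and $\dim \nu > \dim Y - \varepsilon$. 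Using $\dim X \geq \dim Y$, one can arrange $\dim\mu \geq \dim\nu$.

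Before invoking the theorem, I would verify that $\Phi^n$ and $\Psi^n$ also satisfy strong separation, domination and irreducibility. The first two properties pass to compositions routinely. For irreducibility, if some $\theta \in \RP$ were fixed by every product $A_{i_1}\cdots A_{i_n}$, then specializing to $(j,\ldots,j)$ for a $j \in \Gamma$ with $A_j\theta \neq \theta$ (provided by irreducibility of $\Phi$) would force $\theta$ to be an eigenline of $A_j^n$; but domination makes $A_j$ a matrix with distinct positive real eigenvalues, so the eigenlines of $A_j^n$ coincide with those of $A_j$, contradicting $A_j\theta \neq \theta$.

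Now $\mu*\nu$ is supported on $X+Y$, so $\dim(\mu*\nu) \leq \dim(X+Y)$. Choosing $\varepsilon$ smaller than the gap $\min\lbrace 2, \dim X + \dim Y\rbrace - \dim(X+Y)$ yields
$$
\dim(\mu*\nu) < \min \lbrace 2, \dim\mu + \dim\nu\rbrace,
$$
and Theorem \ref{theorem-main} applied to $(\Phi^n, \Psi^n, \mu, \nu)$ delivers $\dim\mu > 1 > \dim\nu$ together with the arithmeticity of $\lbrace \log|\lambda_1(A_{i_1}\cdots A_{i_n})|\rbrace \cup \lbrace \log|\lambda_2(B_{j_1}\cdots B_{j_n})|\rbrace$, which is exactly the announced conclusion. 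Passing from measures to sets, $\dim X \geq \dim\mu > 1$ gives $\dim X > 1$, while $\dim\nu < 1$ combined with $\dim\nu > \dim Y - \varepsilon$ and $\varepsilon \to 0$ yields $\dim Y \leq 1$; the strict inequality $\dim Y < 1$ requires a small additional boundary argument.

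The main obstacle I anticipate is the synchronization of parameters: the variational principle couples the choice of $n$ to the approximating measures, and one must verify that a single $n$ is compatible both with the inheritance of the hypotheses of Theorem \ref{theorem-main} for the iterated systems and with the strict dimension gap needed to invoke it. Once these ingredients are aligned, the corollary is a direct translation of the measure-theoretic statement.
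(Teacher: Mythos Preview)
Your approach is essentially the same as the paper's: approximate $X$ and $Y$ by fully supported self-affine measures on the iterated systems via the variational principle of \cite{BaranyKaenmakiRossi2021}, and then invoke Theorem~\ref{theorem-main}. Your extra care in checking that $\Phi^n,\Psi^n$ inherit the hypotheses is sound (the paper omits this); note however that domination only guarantees that each $A_j$ has two real eigenvalues of distinct absolute value, not that they are positive --- this does not affect your argument, since a hyperbolic $2\times 2$ matrix is diagonalizable over $\R$ and therefore $A_j^n$ has the same eigenlines as $A_j$.

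The one genuine gap you flag is real: ruling out $\dim Y = 1$ does not follow from Theorem~\ref{theorem-main} alone. From $\dim\nu < 1$ and $\dim\nu > \dim Y - \varepsilon$ you only get $\dim Y \leq 1$, and letting $\varepsilon\to 0$ changes $n$, so no single arithmetic conclusion survives. The paper closes this case by appealing to Claim~\ref{claim-easycase} together with Theorem~\ref{thm-localentropyaverages}: if $\dim Y = 1$ then $\dim\mu > 1$ and $\dim\nu \geq 1-\varepsilon$, and the local entropy averages argument yields $\dim(\mu*\nu) \geq \min\{1,\dim\mu\} + \min\{1,\dim\nu\} - \varepsilon \geq 2 - 2\varepsilon$, contradicting $\dim(\mu*\nu) \leq \dim(X+Y) < 2 - 3\varepsilon$. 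So the ``small additional boundary argument'' you anticipate is precisely an appeal to an ingredient internal to the proof of Theorem~\ref{theorem-main}, not something derivable from its statement.
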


\begin{proof}
Suppose that $\dim(X+Y) < \min\lbrace 2,\dim X + \dim Y\rbrace$ and let $\varepsilon>0$ be small. Using \cite[Proposition 2.4]{BaranyKaenmakiRossi2021}, choose $n\in\N$ and fully supported self-affine measures $\mu$ and $\nu$ associated to the systems $\lbrace \varphi_{i_1}\circ \cdots \circ \varphi_{i_n}\rbrace_{(i_1,\ldots, i_n)\in\Gamma^n}$ and $\lbrace \psi_{j_1}\circ \cdots \circ \psi_{j_n}\rbrace_{(j_1,\ldots, j_n)\in\Lambda^n}$, such that $\dim \mu \geq \dim X - \varepsilon$ and $\dim \nu \geq \dim Y - \varepsilon$. Then, since $\mu*\nu$ is supported on $X+Y$, for small enough $\varepsilon$ we have
$$
\dim (\mu*\nu) \leq \dim(X+Y) < \min \lbrace 2,\dim  X+ \dim Y \rbrace - 3\varepsilon \leq \min \lbrace 2, \dim \mu + \dim\nu \rbrace - \varepsilon.
$$
The conclusion regarding eigenvalues now follows from Theorem \ref{theorem-main}. In addition, we have $\dim \mu > 1 > \dim \nu$, which implies that $\dim X > 1 \geq \dim Y$ if $\varepsilon$ was chosen small enough. If it were that $\dim Y = 1$, then $\dim \mu > 1 \geq \dim \nu \geq1-\varepsilon$, and Theorem \ref{theorem-main-2} would assert that $\dim(\mu*\nu) > 2-\varepsilon$ which is a contradiction. Thus $\dim Y < 1$ and the proof is complete.
\end{proof}

\subsection{On the proofs of Theorems \ref{theorem-main} and \ref{theorem-main-2}}
We begin with the proof of Theorem \ref{theorem-main-2}. Our proof is based on the local entropy averages of Hochman-Shmerkin \cite{HochmanShmerkin2012}: instead of proving directly that $\dim(\mu*\nu)$ is large, we will show that the average of the finite-scale entropies of $\mu_k *\nu_k$ over many $k\in\N$ is large, where $\mu_k$ and $\nu_k$ are \emph{magnifications} (at scale $2^{-k}$) of $\mu$ and $\nu$ along properly chosen filtrations of their supports. 

Indeed, magnifying both $\mu$ and $\nu$ along the ``cylinder ellipses'' $\varphi_{i_1} \circ \cdots \circ \varphi_{i_n}(B(0,1))$ and $\psi_{j_1} \circ\cdots \circ \psi_{j_n}(B(0,1))$, on the limit they both resemble orthogonal projections of the original measures, by the hyperbolicity assumption. Relying on the exponential separation condition and the strong projection theorem for self-affine measures \cite[Theorem 1.3]{BaranyHochmanRapaport2019} due to Bárány, Hochman and Rapaport, these magnifications will have entropy close to $\min\lbrace 1,\dim\mu\rbrace$ and $\min\lbrace 1,\dim\nu\rbrace$, respectively. Since the irreducibility assumption ensures that the ellipses on which the magnifications are supported on have major semi-axes pointing in different directions, their convolution has a product-like structure on the plane and thus has entropy close to $\min \lbrace 1,\dim\mu\rbrace + \min\lbrace 1,\dim\nu\rbrace$. This essentially concludes the proof of Theorem \ref{theorem-main-2}.

It is a formal consequence of Theorem \ref{theorem-main-2} that for self-affine measures $\mu$ and $\nu$ as in the statement with $\dim\mu\geq \dim \nu$, the inequality $\dim(\mu*\nu) < \min\lbrace 2,\dim\mu + \dim\nu\rbrace$ is only possible when $\dim\mu > 1 >\dim \nu$. This is the first statement of Theorem \ref{theorem-main}. Given measures $\mu$ and $\nu$ with $\dim\mu>1>\dim\nu$, we show that in the case where $\lbrace \log|\lambda_1(A_i)|:\ i \in \Gamma\rbrace \cup \lbrace \log|\lambda_2(B_j)|:\ j \in \Lambda\rbrace
$ is \emph{not} an arithmetic set, we will in fact have $\dim(\mu*\nu) = \min\lbrace 2,\dim\mu+\dim\nu\rbrace$, contradicting the assumption of strict inequality.
 
As in the proof of Theorem \ref{theorem-main-2}, we rely on the local entropy averages. However, a consequence of the assumption $\dim \mu >1 >\dim\nu$ is that the magnifications of $\mu$ along the cylinder ellipses can no longer store a sufficient amount of the dimension of $\mu$ for us to be able to reach the desired lower bound for the dimension of $\mu*\nu$. Instead, we will magnify $\mu$ along dyadic squares and $\nu$ along the cylinder ellipses. 

For any such magnifications $\mu_k$ and $\nu_k$ of $\mu$ and $\nu$, and any orthogonal projection $\pi$, applying the chain rule of entropy yields that the entropy of $\mu_k*\nu_k$ is equal to
\begin{equation*}
\text{entropy of}\ \pi\mu_k * \pi\nu_k\ +\ \text{conditional entropy of}\ \mu_k*\nu_k\ \text{w.r.t.}\ \pi 
\end{equation*}
which in turn is bounded from below by
\begin{equation}\label{eq-aim1}
\text{entropy of}\ \pi\mu_k * \pi\nu_k\ +\ \text{entropy of}\ \mu_k - \text{entropy of}\ \pi\mu_k,
\end{equation}
using again the chain rule and the fact that convolution never decreases entropy. This observation is put to use through our key geometric ingredient: We show that for nearly every $k$, the magnification $\mu_k$ has a \emph{fiber structure} in the sense that $\pi\mu_k$ is (close to) a \emph{slice measure} of $\mu$, for a properly chosen $\pi$. The precise form of this structure is stated in Proposition \ref{prop-parempijono}. Similar fiber structures have been previously observed for self-affine sets by Käenmäki, Koivusalo and Rossi \cite{KaenmakiKoivusaloRossi2017}, for self-affine measures on Bedford-McMullen carpets by Ferguson, Fraser and Sahlsten \cite{FergusonFraserSahlsten2015} and for self-affine measures with an additional projection condition by Kempton \cite{Kempton2015}. Combining this with the dimension conservation phenomenon that follows for planar self-affine measures from the Ledrappier-Young formula of Bárány \cite{Barany2015} and the general fact that when averaged over many $k$, the entropy of $\mu_k$ is close to the dimension of $\mu$, we see that upon averaging, \eqref{eq-aim1} is in fact close to
\begin{equation}\label{eq-aim2}
    \text{entropy of}\ \pi\mu_k * \pi\nu_k\ +\ 1,
\end{equation}
recalling that we are assuming $\dim \mu > 1$. 

Thus, if we manage to show that for most $k$, for the measures $\pi\mu_k$ and $\pi\nu_k$ \emph{on the line} we have 
\begin{equation}\label{eq-aim3}
\text{entropy of}\ \pi\mu_k*\pi\nu_k \geq \min \lbrace 1,\text{entropy of}\ \pi\mu_k\ +\ \text{entropy of}\ \pi\nu_k\rbrace - o(1),
\end{equation}
where $o(1)$ denotes a quantity that vanishes as the scale of the entropy decreases, then we have that the entropy of $\mu_k*\nu_k$ when averaged over many $k$ is at least
\begin{align*}
&\min \lbrace 2,\text{entropy of}\ \pi\mu_k\ +\ 1\ +\ \text{entropy of}\ \pi\nu_k\rbrace-o(1)\\
=\ &\min\lbrace 2, \dim \mu + \dim \nu \rbrace-o(1)
\end{align*}
by \eqref{eq-aim2}, another application of the dimension conservation of \cite{Barany2015} and the projection theorem of \cite{BaranyHochmanRapaport2019}. This would conclude the proof.

Proving \eqref{eq-aim3} is the part where the assumption on the eigenvalues of $A_i$ and $B_j$ steps in, and where the domination condition is most heavily utilized. According to the classical projection theorem of Marstrand (combined with some known exact-dimensionality results), \eqref{eq-aim3} would hold if we were allowed to scale either of the measures $\pi\mu_k$ or $\pi\nu_k$ by a random real number. Because of this, if we were able to show that the sequence $(\pi\mu_k \times \pi\nu_k)_{k\in\N}$ equidistributes for a distribution on the space of measures which is jointly invariant under scaling in horizontal and vertical directions, \eqref{eq-aim3} would follow from an application of Fubini and Marstrand's theorem. 

The first step in establishing the equidistribution of $(\pi\mu_k \times \pi\nu_k)_{k\in\N}$ is the observation that since $\pi\mu_k$ is (close to) a slice of $\mu$ and $\pi\nu_k$ is an orthogonal projection of $\nu$, they both can be expressed as images of points in certain underlying continuous-time dynamical systems $\mathcal{Z}_\Phi = (Z_\Phi, \lbrace T_t\rbrace_{t\in\R}, \lambda_\Phi)$ and $\mathcal{Z}_\Psi = (Z_\Psi, \lbrace S_t\rbrace_{t\in\R}, \lambda_\Psi)$ defined in Section \ref{section6}. As in \cite{BruceJin2022, HochmanShmerkin2012}, this allows us to describe the sequence $(\pi\mu_k  \times \pi\nu_k)_{k\in\N}$ as an image of an orbit of a point in the product system $\mathcal{Z}_\Phi\times\mathcal{Z}_\Psi$, and so the study of equidistribution of $(\pi\mu_k  \times \pi\nu_k)_{k\in\N}$ in the space of probability measures is reduced to the study of equidistribution of orbits in $\mathcal{Z}_\Phi\times\mathcal{Z}_\Psi$. In our setting, the study of equidistribution properties of $\mathcal{Z}_\Phi\times\mathcal{Z}_\Psi$ requires methods which are very different from those of \cite{BruceJin2022, HochmanShmerkin2012}, where the systems $\Phi$ and $\Psi$ were assumed to be self-similar, and which, to our knowledge, have not appeared before in a fractal-geometric setting. 

In the proof of Proposition \ref{claim-ergodicproduct}, we discover a relation between the eigenvalues of the systems $\mathcal{Z}_\Phi$ and $\mathcal{Z}_\Psi$ and the eigenvalues of the linear parts of the functions in $\Phi$ and $\Psi$. Using this relation we find out that under the assumption that $\lbrace \log|\lambda_1(A_i)|:\ i \in \Gamma\rbrace \cup \lbrace \log|\lambda_2(B_j)|:\ j \in \Lambda\rbrace
$ is not an arithmetic set, the flows $\mathcal{Z}_\Phi$ and $\mathcal{Z}_\Psi$ have no common eigenvalues and so, applying methods from ergodic theory, we show that typical orbits in $\mathcal{Z}_\Phi\times\mathcal{Z}_\Psi$ will equidistribute for the product measure $\lambda_\Phi \times \lambda_\Psi$. Consequently, the sequence $(\pi\mu_k \times \pi\nu_k)_{k\in\N}$ can be expressed as an image of an orbit which equidistributes for $\lambda_\Phi \times \lambda_\Psi$, and so the sequence $(\pi\mu_k \times \pi\nu_k)_{k\in\N}$ will equidistribute for a distribution with the desired joint invariance under scaling in vertical and horizonal directions. 

\subsection{Structure}
In Section \ref{section2} we introduce our setting more rigorously, and collect some general known results and short lemmas on self-affine measures, dynamical systems and random matrix products. Section \ref{section3} is devoted to translating the local entropy averages machinery of \cite{HochmanShmerkin2012} to our setting, while in Section \ref{section4} we state our main technical results, and use these to conclude the proof of Theorem \ref{theorem-main}. Section \ref{section5} is perhaps the most technical one, devoted to the proof of the main geometric result, Proposition \ref{prop-parempijono}. The arguments here were inspired by the work of Kempton \cite{Kempton2015}. Finally, in Section \ref{section6} we investigate the dynamics of the sequences of magnifications of $\mu$ and $\nu$, and prove the required lower bounds for their average entropies. 
\clearpage

\begin{table}[H]
\caption{Notation}
\begin{tabularx}{\textwidth}{@{}p{0.4\textwidth}X@{}}
\toprule
  $\Gamma, \Lambda$ & Finite alphabets \\
  $\io,\jo,\ko,\ldots $ & Infinite words of $\Gamma^\N$, $\Lambda^\N$\\
  $\ifi, \jfi,\ldots$ & Finite words \\
  $\Phi = \lbrace \varphi_i\rbrace_{i\in\Gamma} , \Psi =\lbrace \psi_j \rbrace_{j\in\Lambda}$ & Systems of affine invertible contractions \\
  $\bar{\mu},\bar{\nu}$ & Bernoulli measures on $\Gamma^\N$ and $\Lambda^\N$ \\
  $\Pi$ & The natural projections $\Gamma^\N \to\R^2$ and $\Lambda^\N\to\R^2$ \\
  $\mu, \nu$   & Projections of $\bar{\mu}$, $\bar{\nu}$ through $\Pi$  \\
  $\mu_D$ & Normalized restricition on $D$ \\
  $\mu^D$ & Measure $\mu_D$ linearly rescaled onto $[-1,1)^d$ \\
  $\pi_\theta$ & Orthogonal projection onto the line $\theta$ \\ 
  $\pi^i$ & Projection onto the $i$th coordinate \\
  $\mu_{\io,\theta}$ & A ``slice measure'' of $\mu$; see \eqref{eq-muitheta} \\
  $S_t, T_x$ & Scaling by $2^t$ and translation by $-x$, respectively\\
  $R_\theta$ & A rotation taking $\theta$ onto the $y$-axis \\
  $Y_{\io,\theta,r_1,r_2}$ & A rectangle with side lengths $2^{-r_2}\leq 2^{-r_1}$ \\
  $H_{Y_{\io,\theta,r_1,r_2}}$ & An affine map rescaling $Y_{\io,\theta,r_1,r_2}$ onto $R_\theta^{-1}[-1,1]^2$ \\
  $A = UDV^{-1}$ & The singular value decomposition \\
  $L_{\io,\theta,k}, F_{\theta,\io}^k$ & Non-singular linear maps $\R^2\to\R^2$\\
  $Q_{\io,\theta,k}$ & $Y_{\sigma^{\ell_k}\io, \theta, kN + \log \alpha_1 (\io|_{\ell_k}), kN + \log\alpha_2(\io|_{\ell_k})}$ \\
  $E_{\io,\theta,k}$ & The largest ellipse contained in $Q_{\io,\theta,k}$ \\
  $\theta(A)$ & The direction of the longer semi-axis of  $A(B(0,1))$ \\
  $\theta(\io), \theta^-(\io), \theta^*(\io)$ & Limit orientations; see Lemma \ref{lemma-suuntasuppeneekaikkialla} \\
  $\Vert A \Vert$ & Operator norm of $A$ \\
  $A|_\theta$ & The restriction of $A$ onto the line $\theta$ \\
  $\alpha_1(A) \leq \alpha_2(A)$ & The singular values of $A$ \\
  $|\lambda_1(A)|\leq |\lambda_2(A)|$ & The eigenvalues of $A$ \\
  $i_k(\io),\ i_k(\jo)$ & Stopping times; $\Vert A_{\io|_{i_k}} \Vert \approx \Vert B_{\io|_{i_k}}\Vert \approx 2^{-kN}$ \\
  $\ell_k = \ell_k(\io)$ & An increasing sequence; see \eqref{eq-lk}\\
  $\mu_{\io|_{i_k}},\ \nu_{\jo|_{i_k}},\ \mu^{D_{kN}(\Pi(\io))}$ & Magnifications of $\mu$ and $\nu$; see Notation \ref{notation-magnifications}\\
  $\rho(n, (\io,\theta))$ & The reflection done by $A_{\io|_n}^{-1}$ on the line $\theta$\\
  $\mathcal{Z}_\Phi=(Z_\Phi, \lambda_\Phi, \mathcal{T}_s)$ & A flow related to $(\pi_2 \mu^{D_{kN}(\Pi(\io))})_{k\in\N}$; see Section \ref{section6} \\
  $\mathcal{Z}_\Psi=(Z_\Psi, \lambda_\Psi, \mathcal{T}_s)$ & A flow related to $(\nu_{\jo|_{i_k}})_{k\in\N}$; see Section \ref{section6} \\
  $\mathcal{Z}_\Phi', \mathcal{Z}_\Psi'= (Z_\Psi', \lambda_\Psi', \mathcal{T}_s)$ & Projections of $\mathcal{Z}_\Phi$ and $\mathcal{Z}_\Psi$ through $\pi^{1,2,4}$ \\
  $F: \mathcal{Z}_\Phi \to \mathcal{P}(\R^2)$ & Coding of $\pi^2\mu^{D_{kN}(\Pi(\io))}$ via $\mathcal{Z}_\Phi$; see \eqref{eq-definitionF} \\
  $G: \mathcal{Z}_\Psi \to \mathcal{P}(\R^2)$ & Coding of $\pi^2 \nu_{\jo|_{i_k}}$ via $\mathcal{Z}_\Psi$; see \eqref{eq-definitionG}\\
  $F', G'$ & Functions $F$ and $G$ without reflection \\
\bottomrule
\end{tabularx}
\end{table}
\newpage

\section{Preliminaries}\label{section2}

In this paper, a measure refers to a Radon measure on a metrizable topological space. The notation $\cP(X)$ stands for probability measures on the space $X$. For a measure $\mu$ on $X$ and a subset $Y\subseteq X$, $\mu|_Y$ denotes the restriction of $\mu$ onto $Y$, $\mu_Y := \mu(Y)^{-1} \mu|_Y$ the normalized restriction when $\mu(Y) >0$. For a measurable function $f$, let $f\mu:= \mu\circ f^{-1}$ denote the push-forward. The space of probability measures is always equipped with the weak-$^*$ topology which we metrize using the L{\'e}vy-Prokhorov metric $d_{\rm LP}$, 
$$
\dlp(\mu, \nu) = \inf \lbrace \varepsilon>0:\ \mu(A) \leq\nu(A^\varepsilon)+\varepsilon,\ \nu(A)\leq\mu(A^\varepsilon)+\varepsilon\ \text{for all Borel}\ A\rbrace,
$$
where $A^\varepsilon$ denotes the open $\varepsilon$-neighbourhood of $A$. We measure the size of measures most often with the lower Hausdorff dimension, 
$$
\dim \mu = \dimh \mu = \inf \lbrace \dimh(E):\ \mu(E) > 0\rbrace,
$$ 
and occasionally with lower and upper local dimensions, 
$$
\underline{\dim}_{\rm loc}\mu(x) = \liminf_{r\to 0} \frac{\log \mu(B(x,r))}{\log r}, \qquad \overline{\dim}_{\rm loc}\mu(x) = \limsup_{r\to 0} \frac{\log \mu(B(x,r))}{\log r},
$$
where $B(x,r)$ denotes the closed ball centered at $x$ and of radius $r$. The measure $\mu$ is called exact dimensional if there exists $c\geq 0$ such that $\underline{\dim}_{\rm loc}\mu(x)=\overline{\dim}_{\rm loc}\mu(x) = c$ almost everywhere. The following connection between Hausdorff and lower local dimension is well-known, see e.g. \cite[Theorem 1.2]{FanLauRao2002}: For any measure $\mu$, 
$$
\dim \mu = \essinf_{x\sim \mu} \underline{\dim}_{\rm loc}\mu(x).
$$ 
In particular, if $\mu$ is exact dimensional, then $\dim\mu = \underline{\dim}_{\rm loc}\mu(x) = \overline{\dim}_{\rm loc}\mu(x)$ for $\mu$-almost every $x$.

\subsection{Symbolic dynamics}

Let $\Gamma$ be a finite set with $\#\Gamma \geq 2$, and let $\Phi = \lbrace \varphi_i \rbrace_{i\in \Gamma}$ be an iterated function system of contractions on $\R^d$. It is well-known that there exists a unique non-empty compact set $K$, called the \emph{attractor} of $\Phi$, such that 
$$
K = \bigcup_{i\in\Gamma}\varphi_i(K).
$$
We refer to Falconer's book \cite{Falconer1990} for standard properties of iterated function systems. If the functions $\varphi_i$ are of the form $\varphi_i(x) = A_i x + a_i$, where $A_i: \R^d\to\R^d$ are invertible linear maps with $\Vert A_i \Vert < 1$ and $a_i\in\R^d$, then the IFS $\Phi$ is called \emph{self-affine} and its attractor a self-affine set. 

Write $\Gamma^* = \bigcup_{n}\Gamma^n$ for the set of finite words composed of elements of $\Gamma$. For a finite word $\ifi = i_0 i_1 \ldots i_n\in \Gamma^*$ we write $\varphi_\ifi = \varphi_{i_0}\circ\cdots \circ \varphi_{i_n}$. Let $|\ifi|$ denote the number of elements in $\ifi$. For finite words $\ifi$ and $\jfi$, let $\ifi\jfi \in \Gamma^{|\ifi|+|\jfi|}$ denote their concatenation.

For a word $\io\in\Gamma^\N \cup \Gamma^*$ and and integer $k \leq |\io|$, let $\io|_k\in\Gamma^k$ denote its projection to the first $k$ coordinates. When $k=0$, set $\io|_k=\emptyset$. For $\io,\jo\in\Gamma^\N$, let $\io\wedge\jo := \io|_k$, where $k$ is the largest integer for which $\io|_k = \jo|_k$. Define a distance $d$ on $\Gamma^\N$ by 
$$
d(\io,\jo) = 2^{-|\io\wedge\jo|}
$$
for every $\io,\jo\in\Gamma^\N$. For a finite word $\ifi \in \Gamma^*$, write $[\ifi]$ for the cylinder set $\lbrace \io\in\Gamma^\N:\ \io|_{|\ifi|} = \ifi\rbrace$. It is not difficult to see that the cylinder sets are closed and open in the topology generated by $d$.

It is sometimes convenient to consider the two-sided sequence space $\Gamma^\Z$. For $\io = \ldots i_{-2}i_{-1};i_0i_1i_2\ldots \in\Gamma^\Z$ and $m\leq n \in \Z$, write $\io|_m^n = i_{m}i_{m+1}\ldots i_n$. The metric $d$ extends to $\Gamma^\Z$ by replacing $\io|_k$ by $\io|_{-k}^k$ in the definition of $\io\wedge\jo$. The cylinder sets of $\Gamma^\Z$ are given by $[\io]_m^n := \lbrace \jo \in \Gamma^\Z:\ \jo|_m^n = \io|_m^n \rbrace$. There is a natural surjection $\Gamma^\Z \to \Gamma^\N$ given by the restriction to the ``positive coordinates'', $\ldots i_{-1};i_0i_1\ldots =: \io \mapsto \io^+ := i_0 i_1\ldots$. Similarly, we define the projection to the ``negative coordinates'' by $\io^- := i_{-1} i_{-2}\ldots \in \Gamma^\N$.

We let $\sigma$ denote the left-shift on both $\Gamma^\N$ and $\Gamma^\Z$, given by $\sigma(i_0i_1\ldots) = i_1i_2\ldots$ and $\sigma(i_{-1};i_0i_1\ldots) = \ldots i_{0};i_1i_2\ldots$. The tuples $(\Gamma^\N, \sigma)$ and $(\Gamma^\Z,\sigma)$ are referred to as the one-sided and two-sided shift spaces, respectively. For any $\sigma$-invariant probability measure $\nu$ on $\Gamma^\Z$, there is a unique $\sigma$-invariant probability measure on $\Gamma^\Z$ which we also denote by $\nu$, given by $\nu([\io]_m^n) := \nu([\io|_m^n])$ for each $\io\in\Gamma^\Z$, $m\leq n\in\Z$. This is referred to as the natural extension of $\nu$. 

\subsection{Linear algebra and random matrix products}
Let $A$ be a real-valued $2\times 2$-matrix. Recall that the eigenvalues of $A$ are denoted by $\lambda_1(A)$ and $\lambda_2(A)$ with $|\lambda_1(A)|\leq|\lambda_2(A)|$. A matrix $A$ maps the unit ball onto an ellipse with major semi-axes of length $\alpha_1(A) \leq \alpha_2(A)$, where $\alpha_1(A), \alpha_2(A)$ are the singular values of $A$. Let $\theta(A) \in \RP$ denote the line parallel to the longer semi-axis of this ellipse. The conditions of total irreducibility, hyperbolicity and domination defined in Section \ref{section1} for affine iterated function systems extend in the obvious way for a tuple of matrices $\lbrace A_i\rbrace_{i\in\Gamma}$.  

For a finite word $\ifi = i_0i_1\ldots i_n\in\Gamma^*$, write $A_\ifi = A_{i_0}A_{i_1} \ldots A_{i_n}$ and $A_\ifi^{-1} = (A_\ifi)^{-1} = A_{i_n}^{-1} \ldots A_{i_1}^{-1} A_{i_0}^{-1}$. We define a distance $d$ on $\RP$, given by the smaller angle between lines.

\begin{lemma}\label{lemma-suuntasuppeneeprelminary}
    Let $\lbrace A_i \rbrace_{i\in\Gamma}$ be a collection of $2\times 2$-matrices satisfying the total irreducibility and hyperbolicity conditions, and let $\bmu$ be a fully supported Bernoulli measure on $\Gamma^\N$. For $\bmu$-almost every $\io\in\Gamma^\N$, there exists $\theta(\io)\in\RP$ such that any accumulation point of $$\lbrace \Vert A_{\io|_n}\Vert^{-1} A_{\io|_n}:\ n\in\N\rbrace$$ in the strong operator topology is a rank-$1$ matrix with range $\theta(\io)$. If $\lbrace A_i \rbrace_{i\in\Gamma}$ additionally satisfies the domination condition, then this is true for every $\io\in\Gamma^\N$.
\end{lemma}
\begin{proof}
    The statement for almost every $\io$ is included in \cite[Theorem III.3.1]{BougerolLacroix1985}. If $\lbrace A_i \rbrace_{i\in\Gamma}$ satisfies the domination condition, it is proved in \cite[Theorem B]{BochiGourmelon2009} that $\frac{\alpha_1(A_{\io|_n})}{\alpha_2(A_{\io|_n})}\to 0$ as $n\to \infty$, uniformly for every $\io\in\Gamma^\N$, whence any accumulation point of $\lbrace \Vert A_{\io|_n}\Vert^{-1} A_{\io|_n}:\ n\in\N\rbrace$ is a rank-$1$ matrix. For a proof that all of these accumulation points have the same range $\theta(\io)$, we refer to \cite[Lemma 2.1]{Rossi2021}.
\end{proof}

\begin{lemma}\label{lemma-suuntasuppeneekaikkialla}
Let $\lbrace A_i \rbrace_{i\in\Gamma}$ be a collection of $2\times 2$-matrices satisfying the total irreducibility and hyperbolicity conditions, and let $\bmu$ be a fully supported Bernoulli measure on $\Gamma^\N$. For $\bmu$-almost every $\io\in\Gamma^\N$, the following limits exist:
\begin{align*}
\theta(\io) &= \lim_{n \to \infty} \theta(A_{\io|_n}),\\
\theta^-(\io) &:= \lim_{n\to\infty} \theta(A_{i_0}^{-1} A_{i_1}^{-1}\ldots A_{i_n}^{-1}), \\
\theta^*(\io) &:= \lim_{n \to\infty} \theta(A_{i_0}^{*}A_{i_1}^{*}\ldots A_{i_n}^*).
\end{align*}
If $\lbrace A_i \rbrace_{i\in\Gamma}$ additionally satisfies the domination condition, then the above limits exist for every $\io\in\Gamma^\N$, the convergences are uniform, and the functions $\io \mapsto \theta(\io)$, $\io\mapsto \theta^-(\io)$ and $\io\mapsto \theta^*(\io)$ are Hölder continuous. 
\end{lemma}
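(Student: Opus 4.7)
I would prove the lemma by reducing all three statements to a single principle: for a family of invertible $2\times 2$ matrices satisfying domination with respect to a multicone $\mathcal{C} \subset \RP$, the forward images of $\mathcal{C}$ shrink exponentially in angular diameter, and the principal left-singular direction converges to the same limit as the nested intersection.

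To handle $\theta(\io)$: Since $\Gamma$ is finite and $A_i(\mathcal{C}) \subset {\rm int}(\mathcal{C})$, a standard compactness argument (for instance via contraction in the Hilbert metric on cones, or direct angular geometry) yields constants $C>0$ and $\tau \in (0,1)$ such that the angular diameter of $A_\ifi \mathcal{C}$ is at most $C\tau^{|\ifi|}$ for every $\ifi \in \Gamma^*$. A direct consequence is the exponential singular value gap $\alpha_1(A_\ifi)/\alpha_2(A_\ifi) \leq C\tau^{|\ifi|}$. Since $A_{\io|_{n+1}}\mathcal{C}\subseteq A_{\io|_n}\mathcal{C}$, the cones $A_{\io|_n}\mathcal{C}$ are nested with exponentially vanishing diameter, so their intersection is a single direction which I define to be $\theta(\io)$. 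If $\io$ and $\jo$ agree on the first $n$ coordinates, both $\theta(\io)$ and $\theta(\jo)$ lie in $A_{\io|_n}\mathcal{C}$, so $d(\theta(\io),\theta(\jo)) \leq C\tau^n = C\cdot d(\io,\jo)^{\log(1/\tau)/\log 2}$, giving Hölder continuity. To identify $\theta(\io)$ with $\lim_n \theta(A_{\io|_n})$, I would fix any $v_0 \in {\rm int}(\mathcal{C})$ and note that the direction of $A_{\io|_n} v_0 \in A_{\io|_n}\mathcal{C}$ tends to $\theta(\io)$; decomposing $v_0$ in the singular basis of $A_{\io|_n}$, the angle between $A_{\io|_n}v_0$ and $\theta(A_{\io|_n})$ is of order $(\alpha_1/\alpha_2)$ times a bounded factor, provided the bottom singular direction $v_1^{(n)}$ of $A_{\io|_n}$ stays uniformly away from ${\rm int}(\mathcal{C})$—a fact which I would verify by a further compactness argument, using that vectors in ${\rm int}(\mathcal{C})$ cannot be compressed to length $\asymp \alpha_1(A_{\io|_n})$. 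The rate $\tau^n$ is uniform in $\io$, yielding uniform convergence.

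For $\theta^-(\io)$ and $\theta^*(\io)$ I would verify that the inverse and transpose cocycles inherit domination with respect to related multicones, so the same machinery applies. Specifically, $A_i(\mathcal{C}) \subset {\rm int}(\mathcal{C})$ is equivalent to $A_i^{-1}(\RP \setminus {\rm int}(\mathcal{C})) \subset \RP \setminus \mathcal{C}$, so $\{A_i^{-1}\}$ dominates with multicone $\RP\setminus{\rm int}(\mathcal{C})$, yielding convergence and Hölder continuity of the forward products $A_{i_0}^{-1}A_{i_1}^{-1}\cdots A_{i_n}^{-1}$, i.e.\ of $\theta^-$. For $\theta^*$ I would use the duality $A^T(\theta^\perp) = (A^{-1}\theta)^\perp$ together with the previous observation to check that $\{A_i^T\}$ dominates with respect to the perpendicular of $\RP \setminus {\rm int}(\mathcal{C})$; applying the same strategy to this system yields the conclusion for $\theta^*$.

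The main technical obstacle is the uniform bound on the bottom singular direction in the convergence step; all other ingredients are standard applications of the Hilbert-metric contraction principle and the Bochi–Gourmelon characterization of dominated matrix cocycles.
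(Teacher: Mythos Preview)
The paper does not give an argument: it simply cites \cite[Lemma 2.1]{Rossi2021} and \cite[Lemma 2.1]{KaenmakiKoivusaloRossi2017}. Your sketch is essentially the standard proof that those references carry out, namely cone contraction in $\RP$ (giving the nested intersection $\bigcap_n A_{\io|_n}\mathcal{C}$, uniform exponential rate, and hence H\"older continuity) combined with the Bochi--Gourmelon singular value gap to identify this intersection with $\lim_n\theta(A_{\io|_n})$. Your reductions for $\theta^-$ and $\theta^*$ via the complementary and perpendicular multicones are also the usual ones and are correct as stated.

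The one place I would tighten is the step you flag yourself. Showing that the weak right singular direction of $A_{\io|_n}$ stays uniformly away from $\mathcal{C}$ is not quite a one-line compactness argument: the claim ``vectors in $\mathcal{C}$ cannot be compressed to length $\asymp\alpha_1$'' is exactly what needs proving. A cleaner route is to note that the \emph{strong} right singular direction $s_n$ satisfies $A_{\io|_n}s_n=\theta(A_{\io|_n})$ in $\RP$, and $s_n$ equals $\theta(A_{\io|_n}^*)^\perp$; since $\{A_i^*\}$ is itself dominated (by your own duality argument), $\theta(A_{\io|_n}^*)$ lies in the multicone for the adjoint family, hence $s_n$ lies in a fixed compact set containing $\mathcal{C}$ in its interior after finitely many steps, and then $\theta(A_{\io|_n})=A_{\io|_n}s_n\in A_{\io|_{n-k}}\mathcal{C}$ for some fixed $k$. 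This is how the cited references close the loop. With that adjustment your argument is complete and matches the literature.
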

\begin{proof}
    It is easy to see that the family $\lbrace A_i \rbrace_{i\in\Gamma}$ being totally irreducible, hyperbolic or dominated implies the same for the families $\lbrace A_i^{-1} \rbrace_{i\in\Gamma}$ and $\lbrace A_i^* \rbrace_{i\in\Gamma}$, whence the existence of the limits follows immediately from Lemma \ref{lemma-suuntasuppeneeprelminary}. The other statements are proved in \cite[Lemma 2.1]{Rossi2021}.
\end{proof}

\begin{lemma}\label{cor-almostcontraction}
    Let $\lbrace A_i \rbrace_{i\in\Gamma}$ be a collection of $2\times 2$-matrices satisfying the total irreduciblity and hyperbolicity conditions. Then for $\bmu$-almost every $\io\in\Gamma^\N$ and for every $\theta_1,\theta_2 \in \RP\setminus \lbrace \theta(\io)\rbrace$ and $\theta_3,\theta_4\in\RP\setminus\lbrace\theta(\io)^\perp\rbrace$,
    $$
\lim_{k\to\infty} d(A_{\io|_k}^{-1} \theta_1,\ A_{\io|_k}^{-1} \theta_2) = \lim_{k\to\infty} d(A_{\io|_k}^*\theta_3,\ A_{\io|_k}^*\theta_4) = 0.
    $$
    If $\lbrace A_i \rbrace_{i\in\Gamma}$ additionally satisfies the domination condition, then this holds for every $\io\in\Gamma^\N$.
\end{lemma}

\begin{proof}
    By \cite[Lemma III.4.2]{BougerolLacroix1985}, 
    $$
    \frac{d(A_{\io|_k}^{-1} \theta_1,\ A_{\io|_k}^{-1}\theta_2)}{d(\theta_1,\theta_2)} \leq  \frac{\alpha_1(A_{\io|_k})^{-1} \alpha_2(A_{\io|_k})^{-1}}{\Vert A_{\io|_k}^{-1}e_{\theta_1}\Vert \Vert A_{\io|_k}^{-1}e_{\theta_2}\Vert},
    $$
    where $e_{\theta_1}$ and $e_{\theta_2}$ are unit vectors of $\theta_1$ and $\theta_2$. For $\theta\in\RP$, let $\pi_\theta: \R^2\to \theta$ denote the orthogonal projection onto $\theta$. From the singular value decomposition, it is not difficult to see that for any invertible $2\times2$-matrix $A$ and any $x\in\R^2$, we have $\Vert A^{-1} x \Vert \geq \alpha_1(A)^{-1} \Vert \pi_{\theta(A)^{\perp}} x\Vert$, cf. \cite[Proof of Proposition III.3.2]{BougerolLacroix1985}. Inserting this into the above estimate we obtain 
    $$
    \frac{d(A_{\io|_k}^{-1} \theta_1,\ A_{\io|_k}^{-1}\theta_2)}{d(\theta_1,\theta_2)} \leq \frac{\alpha_1(A_{\io|_k})}{\alpha_2(A_{\io|_k})} \frac{1}{\Vert \pi_{\theta(A_{\io|_k})^\perp} e_{\theta_1}\Vert \Vert \pi_{\theta(A_{\io|_k})^\perp} e_{\theta_2}\Vert}.
    $$
    Since $\theta_1,\theta_2 \in \RP\setminus\lbrace \theta(\io)\rbrace$, it follows from Lemma 
\ref{lemma-suuntasuppeneekaikkialla} that the vectors $\pi_{\theta(A_{\io|_k})^\perp} e_{\theta_1}$ and $\pi_{\theta(A_{\io|_k})^\perp} e_{\theta_2}$ remain uniformly bounded away from $0$ for large $k$, for $\mu$-almost every (or every, if the domination condition holds) $\io\in\Gamma^\N$. Since $\lim_{k\to\infty} \frac{\alpha_1(A_{\io|_k})}{\alpha_2(A_{\io|_k})} = 0$ for $\bmu$-almost every $\io$ (or every, if domination is assumed) by Lemma \ref{lemma-suuntasuppeneeprelminary}, it follows that $\lim_{k\to\infty} d(A_{\io|_k}^{-1} \theta_1,\ A_{\io|_k}^{-1}\theta_2) = 0$. That $\lim_{k\to\infty} d(A_{\io|_k}^*\theta_3,\ A_{\io|_k}^*\theta_4) = 0$ is proved analogously, using the observation that $\Vert A^* x \Vert \geq \alpha_2(A) \Vert \pi_{\theta(A)} x \Vert$ for any $2\times2$-matrix $A$ and $x\in \R^2$. 
\end{proof}

\begin{lemma}\label{lemma-differentangles}
Let $\lbrace A_i \rbrace_{i\in\Gamma}$ be a collection of $2\times 2$-matrices satisfying the total irreduciblity and hyperbolicity onditions. Then for any $\theta\in\RP$ and for $\bmu$-almost every $\io\in\Gamma^\N$, we have $d(\theta(\io), \theta)>0$, $d(\theta^-(\io),\theta)>0$ and $d(\theta^*(\io),\theta) > 0$.
\end{lemma}
\begin{proof}
This is proved in \cite[Theorem III.3.1]{BougerolLacroix1985}.
\end{proof}

For a matrix $A$, let $v_i(A)$ denote the eigenspace associated to $\lambda_i(A)$ for $i=1,2$. When $A$ is \emph{hyperbolic}, that is, $|\lambda_1(A)| < |\lambda_2(A)|$, we have $v_i(A) \in \RP$ for $i=1,2$. The following observation follows immediately from the eigenvalue decomposition.

\begin{lemma}\label{lemma-A^n}
For a hyperbolic matrix $A$, $\lim_{n \to \infty} \theta(A^n) = v_2(A)$.
\end{lemma}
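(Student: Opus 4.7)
The plan is to extract the direction of the major axis of $A^n(B(0,1))$ from the eigenvalue decomposition of $A^n$, exploiting that the eigenvalue $\lambda_2$ dominates $\lambda_1$ exponentially.

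First, since $A$ is hyperbolic, write $A = P \Lambda P^{-1}$ with $\Lambda = \mathrm{diag}(\lambda_1, \lambda_2)$ and $P$ the invertible matrix whose columns $w_1, w_2$ are unit eigenvectors spanning $v_1(A)$ and $v_2(A)$ respectively. Then
$$
\frac{1}{\lambda_2^n} A^n \;=\; P \begin{pmatrix} (\lambda_1/\lambda_2)^n & 0 \\ 0 & 1\end{pmatrix} P^{-1} \;\xrightarrow[n \to \infty]{}\; M \;:=\; P\begin{pmatrix} 0 & 0 \\ 0 & 1 \end{pmatrix} P^{-1},
$$
because $|\lambda_1/\lambda_2| < 1$ by hyperbolicity. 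The limit $M$ is a rank-$1$ matrix whose image equals $\mathrm{span}(w_2) = v_2(A)$, so its non-zero left singular vector points along $v_2(A)$ and its other singular value is $0$. In particular, the two singular values of $M$ are distinct.

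Second, since scaling a matrix by a non-zero scalar does not change its singular directions, the major-axis direction $\theta(A^n)$ coincides with that of $A^n/\lambda_2^n$. Standard perturbation theory for the singular value decomposition (valid at matrices with simple singular values) then guarantees that the left singular vector associated to the larger singular value depends continuously on the matrix near $M$. Applying this to the convergent sequence $A^n/\lambda_2^n \to M$ yields that the unit vector spanning $\theta(A^n)$ converges, up to sign, to the unit vector spanning the image of $M$, which is $v_2(A)$. Hence $\theta(A^n) \to v_2(A)$ in $\RP$, as desired.

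The only step requiring a little care is invoking continuity of the singular vectors at $M$; I would either cite a standard perturbation result, or argue directly by picking a generic unit vector $u = c_1 w_1 + c_2 w_2$ with $c_2 \neq 0$, noting $A^n u / \lambda_2^n \to c_2 w_2$, and deducing that the maximizer $v_n$ of $u \mapsto \|A^n u\|$ on the unit circle must satisfy $|\langle v_n, w_2\rangle| \to 1$ (up to sign), since otherwise $\|A^n v_n\|$ would grow only like $|\lambda_1|^n$. This second, bare-hands route avoids quoting any perturbation theorem and is probably the cleanest write-up, given that the paper explicitly calls the statement an immediate consequence of the eigenvalue decomposition.
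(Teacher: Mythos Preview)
Your proposal is correct and matches the paper's approach: the paper gives no proof at all, merely stating that the lemma ``follows immediately from the eigenvalue decomposition,'' and your argument via $A^n/\lambda_2^n \to M$ with $M$ rank one and image $v_2(A)$, together with continuity of singular directions at a matrix with simple singular values, is precisely a clean way to flesh this out.

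One small imprecision in your bare-hands alternative: since $w_1$ and $w_2$ need not be orthogonal, the relevant quantity to control is the $w_2$-coefficient $c_2^{(n)}$ of $v_n$ in the basis $\{w_1,w_2\}$, not the inner product $\langle v_n, w_2\rangle$. What you actually need (and what your growth-rate argument gives) is that $c_2^{(n)}$ stays bounded away from zero, so that $A^n v_n/\lambda_2^n = c_1^{(n)}(\lambda_1/\lambda_2)^n w_1 + c_2^{(n)} w_2$ has direction converging to $v_2(A)$. With that correction the bare-hands route is fine too.
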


\subsection{The Furstenberg measures}

Let $\Phi = \lbrace \varphi_i(x) = A_i x + a_i\rbrace_{i\in\Gamma}$ be an affine iterated function system satisfying the total irreducibility and hyperbolicity conditions. It is due to Furstenberg \cite{Furstenberg1963} that for any probability vector $(p_i)_{i\in\Gamma}$, there exist unique and non-atomic probability measures $\mu_F$ and $\mu_F^*$ on $\RP$, called the \emph{Furstenberg measures}, that satisfy $\mu_F = \sum_{i\in\Gamma} p_i \mu_F\circ A_i$ and $\mu_F^* = \sum_{i\in\Gamma} p_i \mu_F \circ (A_i^*)^{-1}$. Proof of the existence of these measures, and an extensive discussion on their properties, can also be found in the book of Bougerol and Lacroix \cite[Theorem II.4.1]{BougerolLacroix1985}. If $\bmu := p^\N$ denotes the Bernoulli measure on $\Gamma^\N$ with marginal $(p_i)_{i\in\Gamma}$, the product measures $\bmu\times \mu_F$ and $\bmu\times \mu_F^*$ are invariant and ergodic under the maps
\begin{align*}
M: (\io,\theta) \mapsto (\sigma\io, A_{i_0}^{-1}\theta), \\
M_*: (\io,\theta) \mapsto (\sigma\io, A_{i_0}^* \theta),
\end{align*}
respectively; see for example \cite[Section 1.4]{BenoistQuint2016}. When $\Phi$ satisfies the domination condition, the measures $\bmu\times \mu_F$ and $\bmu\times\mu_F^*$ are images of the Bernoulli measure $p^\Z$ on $\Gamma^\Z$ through the factor maps $\io \mapsto (\io^+, \theta^-(\io^-))$ and $\io \mapsto (\io^+, \theta^*(\io^-))$, which easily implies the existence, invariance and ergodicity of $\bmu\times \mu_F$ and $\bmu\times\mu_F^*$. It follows from Birkhoff's ergodic theorem that the measures $\mu_F$ and $\mu_F^*$ are given by
\begin{align*}
\mu_F &= \lim_{n\to\infty} \frac{1}{n}\sum_{k=1}^n \delta_{A_{i_n}^{-1} \ldots A_{i_0}^{-1}\theta}, \\
\mu_F^* &=\lim_{n\to\infty} \frac{1}{n}\sum_{k=1}^n \delta_{A_{i_n}^{*} \ldots A_{i_0}^{*}\theta}
\end{align*}
for almost every $(\io,\theta)\in\Gamma^\N\times\RP$. In particular, when $\Phi$ satisfies the domination condition, it follows from Lemmas \ref{lemma-suuntasuppeneeprelminary} and \ref{lemma-suuntasuppeneekaikkialla} that $\mu_F$ and $\mu_F^*$ are supported on the closed sets $\theta^-(\Gamma^\N)$ and $\theta^*(\Gamma^\N) = \theta(\Gamma^\N)^\perp$, respectively.

\subsection{Self-affine measures}

Let $\Phi$ be a self-affine iterated function system, and let $K$ denote its attractor. Let $\Pi: \Gamma^\N \to K$ denote the surjection 
$$
\io \mapsto \lim_{k\to\infty}\varphi_{\io|_k}(0)
$$
which we call the natural projection. Fix a probability vector $p = (p_i)_{i \in \Gamma}$ and let $\bar{\mu} = p^\N$ denote the Bernoulli measure on $\Gamma^\N$ with marginal $p$. The measure $\mu := \Pi \mub = \mub \circ \Pi^{-1}$ is called the \emph{self-affine measure} associated to $(\Phi,p)$, and is well-known to be a Radon measure supported on $K$ that satisfies 
$$
\mu = \sum_{i\in\Gamma} p_i \cdot \varphi_i \mu.
$$
The following strong projection theorem for self-affine measures is due to by Bárány, Hochman and Rapaport \cite{BaranyHochmanRapaport2019}. For $\theta\in\RP$, let $\pi_\theta :\R^2\to\theta$ denote the orthogonal projection onto the line $\theta$.
\begin{theorem}[Theorems 1.3 and 7.1 of \cite{BaranyHochmanRapaport2019}]\label{theorem-selfaffineprojections}
Let $\mu$ be a self-affine measure associated to a totally irreducible, hyperbolic system of affine contractions $\Phi$ with the exponential separation condition. Then for $\mu_F^*$-almost every $\theta\in\RP$, 
\begin{equation}\label{eq-projectionresult}
\dim \pi_\theta \mu = \min \lbrace 1, \dim \mu \rbrace.
\end{equation}
Moreover, if $\Phi$ satisfies the strong separation condition, then \eqref{eq-projectionresult} holds for every $\theta\in\RP$.
\end{theorem}
The analogous result for self-similar measures was proven earlier by Hochman and Shmerkin in \cite{HochmanShmerkin2012}. 

It follows from the Ledrappier-Young formula due to Bárány \cite{Barany2015} that planar self-affine measures satisfy dimension conservation in directions typical for the Furstenberg measure $\mu_F$. The Ledrappier-Young formula was shortly after generalized to higher dimensions by Bárány and Käenmäki \cite{BaranyKaenmaki2017}. The application to dimension conservation was recently generalized by Feng \cite{Feng2023} to higher dimensions and for more general self-affine measures. Let $\mu = \int \mu_x^\theta\,d\pi_\theta\mu(x)$ denote the disintegration of $\mu$ with respect to the orthogonal projection $\pi_\theta$.

\begin{theorem}[Corollary of Theorem 2.7 of \cite{Barany2015}]\label{thm-dimensionconservation}
Let $\mu$ be a self-affine measure associated to an affine system of contractions $\Phi$ with the strong separation and domination conditions, and $\mu_F$ the associated Furstenberg measure. Then for $\mu_F$-a.e. $\theta$ and $\pi_\theta\mu$-a.e. $x$, the measure $\mu_x^\theta$ is exact dimensional and
$$
\dim \mu = \dim \pi_{\theta^\perp}\mu + \dim \mu^\theta_x.
$$
\end{theorem}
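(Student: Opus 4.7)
The plan is to obtain the statement as a direct consequence of Bárány's Ledrappier-Young formula for planar self-affine measures, \cite[Theorem 2.7]{Barany2015}. Under strong separation and domination, the IFS $\Phi$ fits into Bárány's framework: strong separation yields a well-defined symbolic coding with pairwise disjoint cylinders, while the domination condition equips the linear cocycle $(A_i)_{i\in\Gamma}$ with a dominated splitting. This in turn forces the existence and simplicity of the two Lyapunov exponents $\chi_1 > \chi_2$ and produces the Hölder continuous direction fields $\io\mapsto\theta(\io)$ and $\io\mapsto\theta^-(\io)$ afforded by Lemma \ref{lemma-suuntasuppeneekaikkialla}, which are precisely the geometric inputs Bárány requires.

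Bárány's theorem then provides two ingredients that together give our statement: first, the exact dimensionality of $\mu$ and of the conditional measures $\mu_x^\theta$ along the fibers of a $\mu_F$-typical projection; and second, the Ledrappier-Young identity that expresses $\dim\mu$ as a sum of the dimension of an orthogonal projection of $\mu$ along a Furstenberg-typical direction and the dimension of the transverse slices of $\mu$.

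The main item requiring any verification beyond invoking the theorem is a notational identification: the Furstenberg measure of the present paper --- the pushforward of $\bar{\mu}$ under $\io\mapsto\theta^-(\io)$ --- must coincide with the Furstenberg measure appearing in Bárány's Ledrappier-Young decomposition, which is defined via the invariant direction of the projective cocycle. Under the domination hypothesis this identification is immediate, since by Lemma \ref{lemma-suuntasuppeneekaikkialla} the stable invariant projective direction is a.s.\ $\theta^-(\io)$ and $\theta^-$ is Hölder continuous, so the pushforward of $\bar{\mu}$ under $\theta^-$ agrees with the stationary measure for the inverse projective cocycle. Once this bookkeeping is in place, the exact dimensionality of $\mu_x^\theta$ and the identity $\dim\mu = \dim\pi_{\theta^\perp}\mu + \dim\mu_x^\theta$ follow at once. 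As the proof is essentially a citation, there is no substantial obstacle beyond this identification.
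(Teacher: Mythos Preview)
Your proposal is correct and matches the paper's approach: the paper states this theorem as a direct corollary of B\'ar\'any's Ledrappier--Young formula \cite[Theorem 2.7]{Barany2015} without giving any further proof. Your additional remarks verifying the hypotheses and identifying the Furstenberg measure are reasonable bookkeeping that the paper leaves implicit.
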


\subsection{Flows and eigenvalues}

In the following, let $X$ be a complete separable metric space and let $\mu$ be a Borel probability measure on $X$. Let $\lbrace T_t: X\to X\rbrace_{t\in \R}$ be a family of measurable functions with the property that $T_0 = {\rm Id}$ and $T_s \circ T_t = T_{s+t}$ for each $s,t\in \R$. Recall that a real number $c \in \R$ is an \emph{eigenvalue} of the flow $(X, \lbrace T_t\rbrace_{t\in\R}, \mu)$ if there exists a measurable function $f: X \to \C$ such that $f\not\equiv 0$ and $f \circ T_t(x) = e(c t)f(x)$ for $\mu$-almost every $x$ and every $t\in \R$, where we write $e(x) := \exp (2\pi i x)$. Such a function $f$ is called an \emph{eigenfunction} for $c$. Recall that the flow $(X, \lbrace T_t\rbrace_{t\in\R}, \mu)$ is ergodic if and only if $0$ is a simple eigenvalue of $(X, \lbrace T_t\rbrace_{t\in\R}, \mu)$, that is, every invariant function is constant almost everywhere. We now record some standard properties of eigenvalues.

\begin{lemma}\label{preliminary-eigenvalueergodic}
Suppose that $T_t: X\to X$ is continuous for each $t\in\R$, and that $(X,\lbrace T_t\rbrace_{t\in\R},\mu)$ is ergodic. For any $c\in \R$, the discrete-time dynamical system $(X, T_c, \mu)$ is ergodic if and only if no non-zero integer multiple of $c^{-1}$ is an eigenvalue of $(X,\lbrace T_t\rbrace_{t\in\R},\mu)$. 
\end{lemma}
\begin{proof}
    This is proved in \cite[Lemma 3.11]{Hochman2012} in a slightly different language. For the convenience of the reader, we repeat the short proof here. 
    
    Let $\mu = \int \mu_x \,d\mu(x)$ be the ergodic decomposition of $\mu$ with respect to the map $T_{c}$. From $\lbrace T_t\rbrace_{t\in\R}$-invariance of $\mu$ it follows that $\mu = \int_0^{c} T_t \mu\,dt = \int\int_0^{c} T_t\mu_{x} \,dt\,d\mu(x)$. Since the function $x\mapsto \int_0^{c} T_t\mu_{x}\,dt$ is $\lbrace T_t\rbrace_{t\in\R}$-invariant, it is constant almost everywhere and so we have $\mu = \int_0^{c}T_t\mu_{x}\,dt$ for $\mu$-almost every $x$. Fix now such an $x$, let $\nu := \mu_x$ and note that the function $F: \R/c\Z \to \mathcal{P}(X)$ given by $F(t) = T_t \nu$ is well-defined since $T_{c}\nu = \nu$. Let $\Lambda = \lbrace r\geq 0:\ F(r) = F(0)\rbrace$ denote the set of periods of $F$. Since $F$ is Lebesgue measurable (even continuous, see e.g. \cite[Proposition 3.1]{EdekoGerlachKuhner2019}), either $\Lambda = \R/c\Z$ or $\Lambda = \lbrace kc/n: 0\leq k \leq n\rbrace$ for some $n\in\N$. In the former case, $\mu = \int_0^{c} F(t)\,dt = F(0) = \nu$ and so $\mu$ is $T_c$-ergodic. In the latter case, there exists a largest $n\in\N$ such that $T_{c/n}\nu = \nu$ and so $\mu = \int_0^{c/n} T_t\nu\,dt$. In particular, $n/c$ is an eigenvalue of $(X,\lbrace T_t\rbrace_{t\in\R},\mu)$ for the eigenfunction $f: X \to \C$ defined by $f(x) = e(t(x) n/c)$ for $\mu$-almost every $x$, where $t(x)\in \R/cn^{-1}\Z$ is the unique number such that $\lim_{k\to\infty} \frac{1}{k}\sum_{\ell=1}^k \delta_{T_{c}^\ell x} = T_{t(x)}\nu$.
\end{proof}

\begin{lemma}\label{preliminary-countablymanyeigenvalues}
The flow $(X, \lbrace T_t\rbrace_{t\in\R}, \mu)$ has at most countably many eigenvalues.
\end{lemma}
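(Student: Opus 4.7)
The plan is a standard Hilbert-space separability argument. Since $X$ is a standard Borel space and $\mu$ is a probability measure, $L^2(X,\mu)$ is separable; the idea is to associate to each eigenvalue a unit eigenfunction and show that distinct eigenvalues give mutually orthogonal eigenfunctions, so that the set of eigenvalues embeds into an orthonormal subset of a separable Hilbert space and is therefore at most countable.

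First, for each eigenvalue $c$ I would fix a measurable eigenfunction $f_c$ with $f_c \circ T_s = e(cs)\,f_c$ almost everywhere for every $s \in I$. By the remark recorded immediately before the statement, the measure-preservation $T_s\mu=\mu$ forces $|f_c|\equiv 1$ almost everywhere; after rescaling, $f_c$ is a unit vector in $L^2(X,\mu)$.

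Next I would verify orthogonality of eigenfunctions associated to distinct eigenvalues. For $c_1\neq c_2$, the product $f_{c_1}\overline{f_{c_2}}$ satisfies $(f_{c_1}\overline{f_{c_2}})\circ T_s = e((c_1-c_2)s)\,f_{c_1}\overline{f_{c_2}}$, so integrating against $\mu$ and using $T_s\mu=\mu$ yields
$$\int f_{c_1}\overline{f_{c_2}}\,d\mu \;=\; e((c_1-c_2)s)\int f_{c_1}\overline{f_{c_2}}\,d\mu$$
for every $s\in I$. Since $c_1-c_2\neq 0$, one can choose $s\in I$ (in both cases $I=\R$ and $I=[0,\infty)$, any $s\notin(c_1-c_2)^{-1}\Z$ works) so that $e((c_1-c_2)s)\neq 1$, which forces $\langle f_{c_1},f_{c_2}\rangle_{L^2(\mu)}=0$.

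Finally, the family $\{f_c\}$ indexed by eigenvalues is an orthonormal subset of the separable Hilbert space $L^2(X,\mu)$, and hence must be at most countable. No genuine obstacle arises; the lemma is essentially the orthogonality of distinct unitary characters of the semigroup $I$ combined with separability of $L^2$ on a standard Borel probability space.
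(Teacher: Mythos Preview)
Your proof is correct and takes essentially the same approach as the paper: show that eigenfunctions for distinct eigenvalues are orthogonal, then invoke separability of $L^2(X,\mu)$. The paper states this in two sentences without spelling out the orthogonality computation, whereas you have filled in those details carefully.
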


\begin{proof}
It is not difficult to see that eigenfunctions for different eigenvalues are orthogonal. Since the space $L^2(X)$ is separable when $X$ is complete and separable, any collection of orthogonal functions has to be countable. 
\end{proof}

The following characterisation of ergodicity of a product of ergodic systems plays a key role in the proof of Theorem \ref{theorem-main}.

\begin{proposition}\label{preliminary-ergodicproduct}
Let $(X, \lbrace T_t\rbrace_{t\in\R},\mu)$ and $(Y,\lbrace S_t\rbrace_{t\in\R},\nu)$ be ergodic flows over complete separable metric measure spaces $(X,\mu)$ and $(Y,\nu)$. Then every eigenvalue of $(X\times Y,\lbrace T_t \times S_t\rbrace_{t\in\R}, \mu\times \nu)$ is a sum of eigenvalues of $(X,\mu)$ and $(Y,\nu)$. In particular, the product flow $(X\times Y,\lbrace T_t \times S_t\rbrace_{t\in\R}, \mu\times \nu)$ is ergodic if and only if the flows have no common eigenvalues other than $0$.
\end{proposition}

A detailed proof for the product of discrete-time dynamical systems can be found for example in \cite[Theorem 6.6.6]{Hochmanlecturenotes}. While we believe that the result for the product of ergodic flows is also well-kwown, we were unable to find a reference in the literature and so provide a short proof relying on the spectral theorem for unitary operators. The form of the spectral theorem we require is recorded below.

\begin{theorem}[Spectral theorem for unitary operators]\label{theorem-spectraltheorem}
    Let $(U_t)_{t\in\R}$ be a strongly continuous one-parameter unitary group on a Hilbert space $H$. Then there exists a finite measure $\eta$ on $\R\times\N$ and a unitary map $R: L^2(\R\times\N,\eta)\to H$ such that 
    $$
U_t = R L_t R^{-1},
    $$
    for every $t\in\R$, where the operator $L_t: L^2(\R\times\N,\eta) \to L^2(\R\times\N,\eta)$ is defined by $L_t(a(x,n))(x, n) = e(tx) a(x,n)$ for every $a\in L^2(\R\times\N,\eta)$ and $\eta$-almost every $(x,n)$. 
\end{theorem}
\begin{proof}
    By Stone's theorem for one-parameter unitary groups \cite[Theorem 10.15]{Hall2013}, there exists a self-adjoint operator $A$ such that $U_t = e(t A) = \sum_{k=0}^\infty (2\pi i t A)^k/k!$ for every $t\in\R$. Applying the spectral theorem for self-adjoint operators \cite[Theorem 2.5.1]{Davies1995} we find a finite measure $\eta$ on $\R\times\N$ and a unitary map $R: L^2(\R\times\N,\eta)\to H$ such that $e(tA) = R e(th) R^{-1}$, where $h\in L^2(\R\times \N,\eta)$ is the map $h(x,n) = x$. Setting $L_t = e(th)$ for every $t\in\R$, we have $U_t = R L_t R^{-1}$ and $L_t(a)(x,n) = e(th(x,n)) a(x,n) = e(tx) a(x,n)$ for every $a\in L^2(\R\times\N,\eta)$ and $\eta$-almost every $(x,n)$, as was required. 
\end{proof}

\begin{proof}[Proof of Proposition \ref{preliminary-ergodicproduct}]
    Let $\alpha$ be an eigenvalue of $\lbrace T_t \times S_t\rbrace_{t\in\R}$ for an eigenfunction $f$. Let $\tilde{T}_t$ and $\tilde{S}_t$ denote the linear Koopman operators on the Hilbert spaces $L^2(X,\mu)$ and $L^2(Y,\nu)$ defined by $\tilde{T}_tg = g\circ T_t$ and $\tilde{S}_t g = g\circ S_t$. Since $T_t$ and $S_t$ are measure-preserving, these operators are easily seen to be unitary. Moreover, the groups $(\tilde{T}_t)_{t\in\R}$ and $(\tilde{S}_t)_{t\in\R}$ are strongly continuous, see for example \cite[Proposition 3.1]{EdekoGerlachKuhner2019}. Let $\lbrace \varphi_i\rbrace_{i\in\N}$ and $\lbrace \psi_j\rbrace_{j\in\N}$ be orthonormal bases for $L^2(X,\mu)$ and $L^2(Y,\nu)$. Then $\lbrace \phi_i \psi_j\rbrace_{i,j\in\N}$ forms an orthonormal basis for $L^2(X\times Y, \mu\times\nu)$, so we can expand $f= \sum_{i,j}a_{i,j}\varphi_i\psi_j$ for some $a_{i,j}\in\C$. Decomposing $\tilde{T}_t = R_T L_t^T R_T^{-1}$ and $\tilde{S}_t = R_S L_t^S R_S^{-1}$ using Theorem \ref{theorem-spectraltheorem}, we may further decompose 
    \begin{equation}\label{eq-tensorprod}
\tilde{T}_t \otimes \tilde{S}_t = (R_{T} \otimes R_{S}) (L_{t}^T\otimes L_{t}^S) (R_{T}^{-1}\otimes R_{S}^{-1}).
    \end{equation}
    Using \eqref{eq-tensorprod} and the identity $(\tilde{T}_t \otimes \tilde{S}_t)f = e(\alpha t) f$, we have
    \begin{align*}
    (R_{T}^{-1}\otimes R_{S}^{-1}) e(\alpha t)f &= L_{t}^T\otimes L_{t}^S(R_{T}^{-1}\otimes R_{S}^{-1}) f \\
    &= \sum_{i,j} a_{i,j} e(t(x+y)) R_T^{-1}\otimes R_S^{-1} \varphi_i\psi_j.
    \end{align*}
    By uniqueness of coordinate representation, for any $(i,j)$ such that $a_{i,j}\neq 0$ we have $R_T^{-1}\varphi_iR_S^{-1}\psi_j = e(t(x+y-\alpha)) R_T^{-1}\varphi_iR_S^{-1}\psi_j$, in particular, $R_T^{-1}\varphi_i(x,n) R_S^{-1}\psi_j(y,m)=0$ whenever $x+y\neq \alpha$. Fixing any such $(i,j)$, there exists $c\in\R$ such that $R_T^{-1}\varphi_i(x,n) = 0$ whenever $x\neq \alpha+c$ and $R_S^{-1}\psi_j(y,m)=0$ whenever $y\neq -c$. It easily follows that $\alpha+c$ is an eigenvalue of $\lbrace T_t\rbrace_{t\in\R}$ for the eigenfunction $\varphi_i$, and $-c$ is an eigenvalue of $\lbrace S_t\rbrace_{t\in\R}$ for the eigenfunction $\psi_j$:
    $$
    \varphi_i\circ T_t = R_T L_t^T R_T^{-1}\varphi_i \equiv R_T e((\alpha+c)t) R_T^{-1}\varphi_i = e((\alpha+c)t)\varphi_i
    $$
    and similarly, $\psi_j\circ S_t \equiv e(-ct)\psi_j$. In particular, $\alpha = (\alpha+c)-c$ is a sum of eigenvalues of $\lbrace T_t\rbrace_{t\in\R}$ and $\lbrace S_t\rbrace_{t\in\R}$. 
    
    To prove the second claim, apply the above with $f$ being an invariant function and $\alpha = 0$. Then we find that $c$ is an eigenvalue of both $\lbrace T_t\rbrace_{t\in\R}$ and $\lbrace S_t\rbrace_{t\in\R}$ for the eigenfunctions $\varphi_i$ and $\overline{\psi_j}$, respectively. Since the flows are assumed to have no common eigenvalues other than $0$, we conclude that $c=0$. But this implies that $\varphi_i$ and $\overline{\psi_j}$ are invariant functions for $\lbrace T_t\rbrace_{t\in\R}$ and $\lbrace S_t\rbrace_{t\in\R}$ and therefore constant almost everywhere. Since this is true for any pair $(i,j)$ such that $a_{i,j}\neq 0$ in the representation $f= \sum_{i,j}a_{i,j}\varphi_i\psi_j$, we conclude that also $f$ is constant almost everywhere.
\end{proof}

Let $(\Gamma^\N, \sigma)$ be a shift space. Given a function $f: \Gamma^\N \to (0,+\infty)$, we may build a semi-flow from $(\Gamma^\N, \sigma)$ by ``flowing up'' from a point $\io \in\Gamma^\N$ until we reach the time $f(\io)$, then switch to the point $\sigma \io$ and continue flowing until the time $f(\sigma \io)$, and so on. Formally, we let $Z = (\Gamma^\N\times [0,\infty))/\sim$ eqipped with the quotinent topology, where $\sim$ denotes the equivalence relation generated by $(\io, f(\io)) \sim (\sigma\io, 0)$ for every $\io\in\Gamma^\N$. Denoting each equivalence class $[(\io,t)]$ by the unique representative $(\io,t)$ with $0\leq t< f(\io)$, we may write
$$
Z = \lbrace (\io, t):\ \io \in \Gamma^\N, 0 \leq t \leq f(\io)\rbrace.
$$
Let $T_s: (\io, t) \mapsto (\io, t+s)$ for every $s\geq 0$. If there is a $\sigma$-invariant measure $\mu$ on $\Gamma^\N$ which measures $f$ and $\int f\,d\mu < \infty$, a natural $\lbrace T_s\rbrace_{s\geq 0}$-invariant probability measure on $Z$ is given by $\lambda := (\mu \times \mathcal{L})_Z$. The tripet $(\Gamma^\N, \lbrace T_s\rbrace_{s\geq 0}, \lambda)$ is called the \emph{suspension} of $(\Gamma^\N, \sigma, \mu)$ under the roof function $f$. It is easy to see that if $(\Gamma^\N, \sigma, \mu)$ is ergodic, then so is $(Z, \lbrace T_s\rbrace_{s\geq 0}, \lambda)$: For if $\varphi$ were a non-constant invariant function for $(Z, \lbrace T_s\rbrace_{s\geq 0},\lambda)$, then $\io \mapsto \varphi(\io,0)$ would be a non-constant invariant function for $(\Gamma^\N, \sigma, \mu)$. Suspension flows over the invertible system $(\Gamma^\Z, \sigma)$ are defined analogously by setting $Z= (\Gamma^\Z \times \R)/\sim$, where $\sim$ is the equivalence relation generated by $(\io,t) \sim (\sigma \io, t-f(\io))$ for every $(\io,t)\in\Gamma^\N\times\R$, and in this case we let $T_s: (\io,t)\mapsto (\io,t+s)$ for every $s\in\R$. 

A special property of regular enough suspension flows is that to any eigenvalue corresponds a \emph{continuous} eigenfunction.

\begin{proposition}[Proposition 6.2 of \cite{ParryPollicott1990}]\label{preliminary-weakmixing}
Let $(Z, \lbrace T_s\rbrace_{s\geq 0}, \lambda)$ be the suspension of a shift space $(\Gamma^\N, \sigma, \mu)$ under a locally Hölder continuous roof function, where $\mu$ is the equilibrium state for a locally Hölder continuous potential on $\Gamma^\N$. Then a number $\alpha \in \R$ is an eigenvalue of $(Z, \lbrace T_s\rbrace_{s\geq 0}, \lambda)$ for a continuous eigenfunction if and only if it is an eigenvalue for a measurable eigenfunction. 
\end{proposition}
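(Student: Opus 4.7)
The plan is to reduce the assertion to a rigidity statement for cohomological equations on the base shift $(\Gamma^\N,\sigma,\mu)$, and then invoke the measurable form of Livšic's theorem for equilibrium states of Hölder potentials. The implication "continuous $\Rightarrow$ measurable" is trivial, so suppose $h \in L^2(Z,\lambda)$ is a measurable eigenfunction, meaning $h\circ T_s = e(\alpha s)h$ $\lambda$-a.e. for every $s\in\R$. Since the flow preserves $\lambda$, we may normalize $|h|\equiv 1$, making $h$ essentially $S^1$-valued. Setting $g(\io):=h(\io,0)$, the flow equation along vertical fibers gives $h(\io,t)=e(\alpha t)g(\io)$ for $\lambda$-a.e. $(\io,t)$, and the quotient identification $(\io,f(\io))\sim(\sigma\io,0)$ forces the multiplicative cocycle equation
$$g(\sigma \io)=e(\alpha f(\io))\,g(\io)\qquad\text{for }\mu\text{-a.e. }\io\in\Gamma^\N. \qquad(\star)$$

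The heart of the proof is to upgrade any measurable $g\colon\Gamma^\N\to S^1$ satisfying $(\star)$ to a continuous (in fact Hölder) $\tilde g$ equal to $g$ $\mu$-a.e. This is the content of Livšic's measurable rigidity theorem in its $S^1$-valued form: since $\alpha f$ is Hölder continuous and $\mu$ is a Gibbs state for a Hölder potential, bounded distortion along periodic orbits together with an ergodic-theoretic comparison force measurable solutions of such cocycle equations to admit Hölder representatives. The additive version is classical; the circle-valued extension can be obtained either by passing to a measurable local logarithm (using Lusin's theorem to render $g$ approximately continuous on a set of large measure and then solving the resulting $\R$-valued cohomological equation on cylinders of small diameter), or by running the Livšic argument intrinsically on $S^1$ via the phases of $g$ along closed orbits of $\sigma$.

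With $\tilde g$ in hand, one defines $\tilde h(\io,t):=e(\alpha t)\tilde g(\io)$. Since $\tilde g$ and $f$ are continuous and $(\star)$ (with $\tilde g$ in place of $g$) holds on a set of full $\mu$-measure, it holds throughout $\operatorname{supp}(\mu)$ by continuity, so $\tilde h$ descends to a well-defined continuous function on $Z$. It satisfies $\tilde h\circ T_s=e(\alpha s)\tilde h$ pointwise, and $\tilde h=h$ $\lambda$-a.e., yielding the desired continuous eigenfunction. The main obstacle is precisely the $S^1$-valued Livšic step: the passage from a merely measurable cocycle identity to a Hölder one relies on the Gibbs property of $\mu$ in an essential way, and the circle-valued target introduces monodromy issues that require one to track phases modulo $2\pi\Z$ carefully, which is where the locally Hölder regularity of the roof function $f$ enters. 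A minor secondary point is that $(\star)$ is a priori only valid on a $\mu$-full set, so one has to verify that its continuous version extends to all of $\operatorname{supp}(\mu)$, but this is automatic once $\tilde g$ has been produced.
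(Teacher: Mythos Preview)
The paper does not prove this proposition; it is quoted as Proposition 6.2 of \cite{ParryPollicott1990} and used as a black box. Your sketch is essentially the argument given there: reduce the eigenfunction equation on the suspension to the multiplicative cocycle identity $g\circ\sigma = e(\alpha f)\,g$ on the base, then invoke the $S^1$-valued Livšic rigidity theorem for Gibbs measures to upgrade the measurable solution $g$ to a Hölder one, and finally lift back to a continuous eigenfunction $\tilde h(\io,t)=e(\alpha t)\tilde g(\io)$ on $Z$. So your approach is correct and coincides with the cited source.
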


It is well-known that Bernoulli measures on $\Gamma^\N$ are equilibrium states for locally constant potentials. 
\begin{lemma}\label{lemma-samatominaisarvot}
    Let $f: \Gamma^\Z \to (0,+\infty)$ be a continuous function such that $f(\io) = f(\jo)$ whenever $\io^+ = \jo^+$, and let $f^+: \Gamma^\N \to [0,+\infty)$ denote the function given by $f^+(\io) = f(\jo)$ for every $\io\in\Gamma^\N$ and $\jo\in\Gamma^\Z$ such that $\jo^+ =\io$.

    Let $\mu$ be a shift-invariant ergodic measure on $\Gamma^\Z$ which measures $f$, and write $\mu^+$ for the projection onto the positive coordinates. Let $(Z, \lbrace T_s\rbrace_{s\geq 0}, \lambda) =: \mathcal{Z}$ denote the suspension of $(\Gamma^\Z,\sigma,\mu)$ over $f$, and $(Z^+, \lbrace T_s\rbrace_{s\geq 0}, \lambda^+)=: \mathcal{Z}^+$ the suspension of $(\Gamma^\N, \sigma, \mu^+)$ over $f^+$. 

    Then for any eigenvalue $\alpha\neq 0$ of $\mathcal{Z}$, there exists $n\in\Z$ such that $n\alpha$ is an eigenvalue of $\mathcal{Z}^+$. 
\end{lemma}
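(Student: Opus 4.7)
The plan is to establish the two inclusions of eigenvalues separately. For the inclusion ``eigenvalues of $\mathcal{Z}^+$ are eigenvalues of $\mathcal{Z}$'', I would note that the projection $p: \Gamma^\Z \to \Gamma^\N$, $p(\io) = \io^+$, intertwines the two shifts ($p \circ \sigma = \sigma \circ p$), and since the hypothesis gives $f = f^+ \circ p$, the formula $\pi(\io, t) := (\io^+, t)$ descends to a well-defined measurable factor map $\pi: Z \to Z^+$ that commutes with the suspension flows and pushes $\lambda$ forward to $\lambda^+$. Consequently, any eigenfunction $h$ of $\mathcal{Z}^+$ with eigenvalue $\alpha$ pulls back to the eigenfunction $h \circ \pi$ of $\mathcal{Z}$ with the same eigenvalue.

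For the reverse inclusion, suppose $\alpha \in \R$ is an eigenvalue of $\mathcal{Z}$ with unit-modulus eigenfunction $g$ (the modulus is constant by ergodicity of the suspension). Restricting to the base, set $G(\io) := g(\io, 0)$; from $g(\io, t) = e(\alpha t) G(\io)$ and the identification $(\io, f(\io)) \sim (\sigma\io, 0)$ one reads off the cocycle equation
$$
G \circ \sigma \;=\; e(\alpha \cdot f^+ \circ p)\cdot G \qquad \mu\text{-a.e.}
$$
Let $\tilde G := \mathbb{E}_\mu[G \mid p^{-1}\mathcal{B}(\Gamma^\N)]$ and write $\tilde G = G^+ \circ p$ for a measurable $G^+:\Gamma^\N \to \C$. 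The multiplier $e(\alpha \cdot f^+ \circ p)$ is already $p^{-1}\mathcal{B}$-measurable, while $\sigma^{-1}(p^{-1}\mathcal{B}) = p^{-1}(\sigma^{-1}\mathcal{B}) \subseteq p^{-1}\mathcal{B}$ (this is the ``one-sidedness'' of the natural extension), so taking conditional expectation of both sides of the cocycle equation commutes correctly with $\sigma$ and yields
$$
G^+ \circ \sigma \;=\; e(\alpha \cdot f^+)\cdot G^+ \qquad \mu^+\text{-a.e.}
$$
Thus $|G^+|$ is $\sigma$-invariant and, by ergodicity of $\mu^+$, equals some constant $c \geq 0$ almost everywhere.

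The remaining and most delicate step is to show $c > 0$, because otherwise the candidate eigenfunction $g^+(\io, t) := e(\alpha t) G^+(\io)$ on $\mathcal{Z}^+$ is identically zero. For this I would exploit the natural-extension structure: the filtration $\mathcal{F}_n := \sigma^n(p^{-1}\mathcal{B}(\Gamma^\N))$, whose $n$th term is the $\sigma$-algebra generated by the coordinates $\geq -n$, is increasing and satisfies $\bigvee_{n \geq 0} \mathcal{F}_n = \mathcal{B}(\Gamma^\Z)$, so by the martingale convergence theorem the projections $G_n := \mathbb{E}_\mu[G \mid \mathcal{F}_n]$ converge to $G$ in $L^2$. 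A repetition of the conditional-expectation commutation argument produces the recursion
$$
G_n \circ \sigma \;=\; e(\alpha \cdot f^+ \circ p) \cdot G_{n+1} \qquad \text{for all } n \geq 0,
$$
so if $G_0 = \tilde G$ vanished then $G_n \equiv 0$ for every $n$ by induction on $n$, contradicting $G_n \to G$ and $|G| \equiv 1$. Therefore $c > 0$, and after normalising $G^+$ to have unit modulus one obtains a bona fide eigenfunction of $\mathcal{Z}^+$ with eigenvalue $\alpha$. The hardest part is this non-vanishing step; once the correct filtration is identified, it reduces to an inductive book-keeping argument, but it crucially uses the natural-extension property that positive shifts of the positive-coordinate $\sigma$-algebra generate $\mathcal{B}(\Gamma^\Z)$.
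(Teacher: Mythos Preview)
Your first inclusion (via the factor map $\pi$) coincides with the paper's. For the reverse inclusion the paper argues contrapositively and uses the equivalence ``$\alpha$ is an eigenvalue $\Leftrightarrow$ the time-$\alpha^{-1}$ map is non-ergodic'': it shows that ergodicity of $(Z^+,T_{1/\alpha},\lambda^+)$ forces ergodicity of $(Z,T_{1/\alpha},\lambda)$ by pushing cylinder sets of $Z$ far enough forward along the flow until they depend only on positive coordinates. Your approach---building a one-sided eigenfunction directly by conditioning---is a genuinely different and natural route.

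There is, however, a real gap in the step ``taking conditional expectation of both sides commutes correctly with $\sigma$''. Writing $\mathcal{F}_0=p^{-1}\mathcal{B}(\Gamma^\N)$, the general identity is $\mathbb{E}[G\mid\mathcal{F}_0]\circ\sigma=\mathbb{E}[G\circ\sigma\mid\sigma^{-1}\mathcal{F}_0]$, and $\sigma^{-1}\mathcal{F}_0$ is \emph{strictly smaller} than $\mathcal{F}_0$; the inclusion $\sigma^{-1}\mathcal{F}_0\subseteq\mathcal{F}_0$ you invoke only tells you that $\tilde G\circ\sigma$ is $\mathcal{F}_0$-measurable, not that it equals $\mathbb{E}[G\circ\sigma\mid\mathcal{F}_0]$. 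So the displayed cocycle equation for $G^+$ is not yet justified. The same issue makes your filtration recursion come out with the indices reversed: the correct identity (for $n\ge 1$) is
\[
G_n\circ\sigma=\mathbb{E}[G\circ\sigma\mid\mathcal{F}_{n-1}]=e(\alpha f)\,G_{n-1},
\]
not $G_n\circ\sigma=e(\alpha f)\,G_{n+1}$.

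Both problems are repaired by one observation that your own filtration already delivers. From the corrected recursion and $|e(\alpha f)|=1$ one gets $\|G_n\|_2=\|G_{n-1}\|_2$ for all $n\ge1$, while $G_n\to G$ in $L^2$ by martingale convergence and $\|G\|_2=1$; hence $\|G_0\|_2=1$. Since $G_0=\mathbb{E}[G\mid\mathcal{F}_0]$ is the orthogonal projection of $G$ onto $L^2(\mathcal{F}_0)$ and $\|G_0\|_2=\|G\|_2$, in fact $G=G_0$ almost everywhere, i.e.\ $G$ is already $\mathcal{F}_0$-measurable. The one-sided cocycle equation $G^+\circ\sigma=e(\alpha f^+)G^+$ then follows immediately, and the non-vanishing issue disappears as well. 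With this fix your argument goes through; as written, the commutation step is the missing idea.
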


\begin{proof}
    Let $\alpha\neq 0$ be such that $n\alpha$ is not an eigenvalue of $\mathcal{Z}^+$ for any $n\in\Z$. Then by Lemma \ref{preliminary-eigenvalueergodic}, the discrete-time system $(Z^+, T_{1/\alpha}, \lambda^+)$ is ergodic. (By possibly replacing $\alpha$ by $-\alpha$, we may assume $\alpha>0$.) Our aim is to show that also the system $(Z, T_{1/\alpha},\lambda)$ is ergodic. Let $U\subseteq Z$ be an open set of the form $[\io]_m^n \times I$ for some $\io\in\Gamma^\Z$, $m\leq n \in \Z$ and an interval $I\subseteq \R$. Since any continuous function on $Z$ can be approximated arbitrarily well (in $L^1$) by simple functions on this kind of sets, in order to show that $\lambda$ is ergodic under $T_{1/\alpha}$, it suffices to show that
    $$
    \lim_{n\to\infty} \frac{1}{n}\#\lbrace 0\leq k\leq n:\ T_{k/\alpha}(\io, t)\in U\rbrace = \lambda(U)
    $$
    for a.e. $(\io,t)$. 
    
    Let $\ell\in\N$ be large enough so that for every $(\io,t)\in T_{-\ell/\alpha}U$, also $(\jo,t)\in T_{-\ell/\alpha} U$ whenever $\jo^+ =\io^+$. Write $(T_{-\ell/\alpha}U)^+$ for the projection of $T_{-\ell/\alpha}U$ onto $Z^+$, and note that $T_{-\ell/\alpha}U$ equals the embedding of $(T_{-\ell/\alpha}U)^+$ to $Z$.
    
    Let $A^+$ be the set of full $\lambda^+$-measure such that for each $(\jo, t)\in A^+$, we have
    $$
    \lim_{n\to\infty} \frac{1}{n}\#\lbrace 0\leq k\leq n:\ T_{k/\alpha}(\jo, t)\in (T_{-\ell/\alpha}U)^+\rbrace = \lambda^+((T_{-\ell/\alpha}U)^+) = \lambda(T_{-\ell/\alpha}U) = \lambda(U).
    $$
    by Birkhoff's ergodic theorem. The second-to-last equality follows from the choice of $\ell$, and the last follows from $T_t$-invariance of $\lambda$. 
    Now, if $A$ is the embedding of $A^+$ to $Z$, then $T_{\ell/\alpha} A$ has full $\lambda$-measure, and for each $(\io,t)\in T_{\ell/\alpha} A$, 
    \begin{align*}
    &\lim_{n\to\infty} \frac{1}{n}\#\lbrace 0\leq k\leq n:\ T_{k/\alpha}(\io, t)\in U\rbrace\\
    =\ & \lim_{n\to\infty} \frac{1}{n}\#\lbrace 0\leq k\leq n:\ T_{(k-\ell)/\alpha}(\io, t)\in T_{-\ell/\alpha}U\rbrace \\
    =\ &\lambda(U).
    \end{align*}
    Therefore $\lambda$ is ergodic under $T_{1/\alpha}$ and by Lemma \ref{preliminary-eigenvalueergodic}, $\alpha$ is not an eigenvalue of $\mathcal{Z}$. 
\end{proof}

\subsection{Shannon entropy}

For a probability measure $\mu$ on $\R^d$ and any measurable partitions $\cE$ and $\cF$ of $\R^d$, we write $H(\mu, \cE) = -\sum_{E \in \cE} \mu(E)\log\mu(E)$ for the \emph{(Shannon) entropy} of $\mu$ with respect to $\cE$, and $H(\mu, \cE|\cF) = -\sum_{F\in\cF} \mu(F) H(\mu_F, \cE)$ for the \emph{conditional entropy} of $\mu$ with respect to $\cE$, given $\cF$. Write $\cE \vee \cF = \lbrace E\cap F:\ E\in \cE, F\in\cF\rbrace$ for the join of $\cE$ and $\cF$. In the following, we record some elementary properties of entropy. We refer to \cite[Section 2]{CoverThomas2006} and \cite[Sections 4.2 and 4.3]{Walters1982} for more detailed discussions on the topic.

\begin{lemma}[Chain rule, Theorem 4.3 of \cite{Walters1982}]\label{lemma-chainrule}
Let $\mu$ be a probability measure on $\R^2$, and let $\cE$, $\cF$ be partitions of $\R^2$. Then
$$
H(\mu, \cE\vee \cF) = H(\mu, \cE | \cF) + H(\mu, \cF).
$$
\end{lemma}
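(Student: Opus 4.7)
The chain rule is essentially an algebraic manipulation of the definitions once the refinement property of $\cE$ and $\cF$ is exploited, so the plan is short and direct. First, I would invoke the refinement hypothesis: each atom $E \in \cE$ with $\mu(E) > 0$ is contained in a unique atom $F(E) \in \cF$, and conversely each $F \in \cF$ is partitioned (up to $\mu$-null sets) by the atoms $E \in \cE$ with $E \subseteq F$. This lets me reindex the single sum defining $H(\mu,\cE)$ as an iterated sum $\sum_{F \in \cF} \sum_{E \subseteq F}$.

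The key algebraic step is then to split the logarithm. For $E \subseteq F$ with $\mu(F) > 0$, write
\[
    \log \mu(E) \;=\; \log \mu(F) \;+\; \log \frac{\mu(E)}{\mu(F)} \;=\; \log \mu(F) + \log \mu_F(E),
\]
where I use $\mu_F(E) = \mu(E)/\mu(F)$ from the definition of the normalized restriction. Substituting this into $H(\mu,\cE) = -\sum_E \mu(E)\log \mu(E)$ and summing over $E \subseteq F$ and then $F \in \cF$, the contribution from $\log\mu(F)$ produces
\[
    -\sum_{F \in \cF} \log \mu(F) \sum_{E \subseteq F} \mu(E) \;=\; -\sum_{F \in \cF} \mu(F) \log \mu(F) \;=\; H(\mu,\cF),
\]
since $\sum_{E \subseteq F} \mu(E) = \mu(F)$.

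For the remaining piece, factoring out $\mu(F)$ from each inner sum gives
\[
    -\sum_{F \in \cF} \mu(F) \sum_{E \subseteq F} \mu_F(E) \log \mu_F(E) \;=\; \sum_{F \in \cF} \mu(F)\, H(\mu_F, \cE),
\]
which matches the definition of $H(\mu, \cE \mid \cF)$. Adding the two pieces yields the claimed identity. The only bookkeeping to watch is the convention $0\log 0 = 0$, so that atoms of zero $\mu$-measure contribute nothing on either side and the reindexing is legitimate; there is no real obstacle beyond this, as the statement is purely combinatorial.
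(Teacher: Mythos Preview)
Your proof is correct and is exactly the standard argument. The paper does not actually supply a proof of this lemma: it is listed among several ``elementary properties of entropy'' that are simply recorded without justification, so there is nothing to compare against beyond noting that your derivation is the expected one. (Incidentally, the paper's displayed definition of conditional entropy carries a stray minus sign; your computation uses the correct convention $H(\mu,\cE\mid\cF)=\sum_{F}\mu(F)H(\mu_F,\cE)$, which is what makes the chain rule hold.)
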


\begin{lemma}[Concavity and almost-convexity]\label{lemma-concavityofentropy}
For any partitions $\cE$ and $\cF$ of $\R^2$, the function $\mu\mapsto H(\mu,\cE|\cF)$ is concave. Moreover, the function $\mu\mapsto H(\mu,\cE)$ is almost convex in the sense that for any probability measures $\mu_1, \ldots, \mu_k$ and a probability vector $(p_1, \ldots, p_k)$, 
$$
H\left(\sum_{i=1}^k p_i \mu_i, \cE\right) \leq \sum_{i=1}^k p_i H(\mu_i, \cE) - \sum_{i=1}^k p_i \log p_i.
$$
\end{lemma}
\begin{proof}
    The concavity of the function $\mu\mapsto H(\mu,\cE|\cF)$ follows essentially from the concavity of $x\mapsto - x\log x$ on $[0,1]$; For details, we refer to \cite[Equations (2.29) and (2.43) and Theorems 2.7.2 and 2.7.3]{CoverThomas2006}. The second claim is immediate from the definition of entropy: 
    \begin{align*}
H\left(\sum_{i=1}^k p_i \mu_i, \cE\right) &= -\sum_{i=1}^k p_i \sum_{E\in\cE} \mu_i(E)\log\left(\sum_{i=1}^k p_i\mu_i(E)\right) \\
& \leq-\sum_{i=1}^k p_i\sum_{E\in\cE} \mu_i(E)(\log p_i + \log \mu_i(E)) \\
&= \sum_{i=1}^k p_i H(\mu_i, \cE) - \sum_{i=1}^k p_i \log p_i.
    \end{align*}
\end{proof}

\begin{lemma}\label{lemma-almostcontinuity}
Let $\mu$ be a probability meausure on $\R^d$, and let $\cE$ and $\cF$ be partitions of $\R^d$ such that each element of $\cE$ intersects at most $k$ elements of $\cF$ and vice versa. Then 
$$
|H(\mu, \cE) - H(\mu, \cF)| \leq \log k.
$$
\end{lemma}
\begin{proof}
    We first note that $H(\mu, \cE\vee\cF) \geq H(\mu,\cE)$ by \cite[Theorem 4.3]{Walters1982}. Lemma \ref{lemma-chainrule} now asserts that $H(\mu, \cE) - H(\mu, \cF) \leq H(\mu, \cE\vee F) - H(\mu, \cF) =  H(\mu, \cE |\cF)$ and since for each $F\in \cF$, $\mu_F$ is supported on at most $k$ atoms of $\cE$, it follows from \cite[Corollary 4.21]{Walters1982} that $H(\mu, \cE|\cF) \leq \log k$. Similarly, $H(\mu,\cF) - H(\mu,\cE)\leq \log k$. 
\end{proof}

A consequence of concavity is that taking a convolution can decrease entropy at most by an additive constant.

\begin{lemma}\label{lemma-entropyofconvolution}
Let $\cE$ and $\cF$ be partitions of $\R^2$ such that any translate of $\cE\vee \cF$ intersects at most $k$ elements of $\cE\vee\cF$ and any translate of $\cF$ intersects at most $k$ elements of $\cF$. Then for any probability measures $\mu$ and $\nu$,
$$
H(\mu*\nu, \cE|\cF) \geq H(\mu, \cE|\cF) -2\log k.
$$
\end{lemma}
\begin{proof}
It follows from Lemma \ref{lemma-concavityofentropy} and Jensen's inequality that $H(\mu*\nu,\cE|\cF) \geq \int H(\mu*\delta_x, \cE|\cF)\,d\nu(x)$. By Lemmas \ref{lemma-chainrule} and \ref{lemma-almostcontinuity}, $H(\mu*\delta_x, \cE|\cF) \geq H(\mu, \cE|\cF) - 2\log k$ for every $x\in\R^2$.
\end{proof}

Let $\mathcal{D}_n = \mathcal{D}_n(\R^d)$ denote the partition of $\R^d$ into dyadic cubes of side-length $2^{-n}$ which we call the ``level-$n$'' dyadic partition. For $x\in\R^d$, write $D_n(x)$ for the element of $\mathcal{D}_n$ that contains $x$. For entropy with respect to the dyadic partition we use the short-hand notation $H_n(\mu) = H(\mu, \mathcal{D}_n) = -\sum_{D\in\mathcal{D}_n} \mu(D)\log \mu(D)$. For $\theta\in\RP$, let $\mathcal{D}_n(\theta)$ denote the level-$n$ dyadic partition of the line $\theta$. The following is a simple application of the chain rule; Let $R_\theta$ denote the ``shortest'' rotation which takes $\theta\in\RP$ onto the $y$-axis, with $R_{x\text{-axis}}$ given by the clockwise rotation.

\begin{lemma}\label{lemma-chainruleapplication}
Let $\mu$ be a probability measure on $\R^2$, and let $\theta\in\RP$. Then, denoting $H_n(\mu|\pi_\theta) := H(\mu, \mathcal{D}_n(\R^2) | \pi_\theta^{-1} \mathcal{D}_n(\theta))$, we have
$$
|H_n(\mu) - (H_n(\pi_\theta\mu) + H_n(\mu|\pi_\theta))| \leq \log 9
$$
for every $n\in\N$. 
\end{lemma}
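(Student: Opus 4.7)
The plan is a direct application of the chain rule (Lemma \ref{lemma-chainrule}) after identifying the right refinement relation between partitions. First I would observe that $R_{\theta^\perp}^{-1}\mathcal{D}_n(\R^2)$ is a partition of $\R^2$ into squares of side $2^{-n}$ whose edges are parallel to $\theta$ and $\theta^\perp$, because $R_{\theta^\perp}$ sends $\theta$ to the $x$-axis and $\theta^\perp$ to the $y$-axis. On the other hand, $\pi_\theta^{-1}\mathcal{D}_n(\theta)$ is a partition of $\R^2$ into strips of width $2^{-n}$ perpendicular to $\theta$. Since each rotated square of side $2^{-n}$ with one pair of edges parallel to $\theta^\perp$ is contained in exactly one such strip, the partition $R_{\theta^\perp}^{-1}\mathcal{D}_n(\R^2)$ refines $\pi_\theta^{-1}\mathcal{D}_n(\theta)$.

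Next I would apply Lemma \ref{lemma-chainrule} to obtain
\begin{equation*}
H(\mu, R_{\theta^\perp}^{-1}\mathcal{D}_n(\R^2)) = H(\mu, R_{\theta^\perp}^{-1}\mathcal{D}_n(\R^2) \mid \pi_\theta^{-1}\mathcal{D}_n(\theta)) + H(\mu, \pi_\theta^{-1}\mathcal{D}_n(\theta)).
\end{equation*}
The second term on the right is exactly $H_n(\pi_\theta\mu)$ by the push-forward property of entropy, while the first term is $H_n(\mu \mid \pi_\theta)$ by definition. The left-hand side equals $H_n(R_{\theta^\perp}\mu)$, again by the push-forward identity $H(\mu, f^{-1}\cE) = H(f\mu, \cE)$.

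Finally I would compare $H_n(R_{\theta^\perp}\mu)$ with $H_n(\mu)$ using Lemma \ref{lemma-almostcontinuity}. The two partitions $\mathcal{D}_n(\R^2)$ and $R_{\theta^\perp}^{-1}\mathcal{D}_n(\R^2)$ consist of squares of the same side length $2^{-n}$; a simple geometric argument shows that every square in one of them meets at most a uniformly bounded number (say $k \leq 9$) of squares in the other, independently of $n$ and $\theta$. Hence $|H_n(R_{\theta^\perp}\mu) - H_n(\mu)| = O(1)$, and combining this with the chain rule identity above yields the claim.

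I do not anticipate any real obstacle; the only mildly delicate point is the uniformity in $\theta$ of the overlap bound between the two families of unit squares, but this is elementary once one notes that rotating a unit square changes how many unit squares of the standard grid it can overlap by at most a fixed constant.
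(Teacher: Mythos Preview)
Your proposal is correct and matches the paper's intended argument: the paper does not give a detailed proof but only remarks that the lemma ``is a simple application of the chain rule,'' and your write-up is precisely that application (chain rule with the rotated dyadic partition refining the strip partition, followed by Lemma~\ref{lemma-almostcontinuity} to pass from $H_n(R_{\theta^\perp}\mu)$ to $H_n(\mu)$). The only point worth noting is that the exact refinement $R_{\theta^\perp}^{-1}\mathcal{D}_n(\R^2) \preceq \pi_\theta^{-1}\mathcal{D}_n(\theta)$ relies on $\mathcal{D}_n(\theta)$ being defined via the rotation $R_{\theta^\perp}$; if one uses a different identification of $\theta$ with $\R$, the refinement may fail by one cell, but then another invocation of Lemma~\ref{lemma-almostcontinuity} absorbs the discrepancy into the $O(1)$ term anyway.
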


\begin{proof}
    It follows from Lemma \ref{lemma-chainrule} that $H(\mu, \mathcal{D}_n(\R^2)\vee \pi_\theta^{-1} \mathcal{D}_n(\theta))= H(\pi_\theta\mu, \mathcal{D}_n(\theta)) + H_n(\mu|\pi_\theta)$. It is not difficult to see that each element of $\mathcal{D}_n(\R^2)$ intersects at most two elements of $\mathcal{D}_n(\R^2)\vee \pi_\theta^{-1} \mathcal{D}_n(\theta)$ and vice versa. On the other hand, each element of $\mathcal{D}_n(\theta)$ intersects at most three elements of $\mathcal{D}_n(\R^2)$ and vice versa, whence it follows from Lemma \ref{lemma-almostcontinuity} that $|H_n(\mu) - (H_n(\pi_\theta\mu) + H_n(\mu|\pi_\theta))| \leq \log 9$.
\end{proof}

\begin{lemma}\label{lemma-continuityofentropy}
Let $\mu$ and $\nu$ be probability measures on $[0,1]$, and suppose that $\mu$ is non-atomic. Then for every $r > 0$ and $\varepsilon>0$, there exists $N_0\in\N$ such that for any interval $I$ with $\mu(I)\geq r$, we have 
$$
\frac{1}{N} H_N(\mu_I*\nu) \geq \dim(\mu * \nu)- \varepsilon
$$
for every $N\geq N_0$.
\end{lemma}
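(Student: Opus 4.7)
The plan is to exploit the measure-theoretic domination
$$\mu_I * \nu \;\leq\; \mu(I)^{-1} (\mu * \nu) \;\leq\; r^{-1}(\mu * \nu),$$
which follows from $\mu_I = \mu(I)^{-1}\,\mu|_I \leq r^{-1}\mu$. This means that the local behavior of $\mu_I * \nu$ at any point $z$ is uniformly controlled by that of $\mu * \nu$, with only a multiplicative loss of $r^{-1}$ independent of $I$.

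First I would recall from the preliminaries that $\dim(\mu*\nu) = \essinf_{z\sim\mu*\nu}\underline{\dim}_{\rm loc}(\mu*\nu)(z)$, so that for $(\mu*\nu)$-a.e.\ $z$ one has $\liminf_{N\to\infty}\frac{-\log(\mu*\nu)(D_N(z))}{N} \geq \dim(\mu*\nu)$. Fix $\delta>0$ to be chosen below. A standard monotone-continuity argument (a form of Egorov's theorem applied to the sets $\bigcap_{N\geq n}\{z : (\mu*\nu)(D_N(z))\leq 2^{-N(\dim(\mu*\nu)-\varepsilon/2)}\}$) produces a Borel set $E_0$ and $N_0\in\N$ with $(\mu*\nu)(E_0)\geq 1-\delta$ and
$$(\mu*\nu)(D_N(z)) \leq 2^{-N(\dim(\mu*\nu)-\varepsilon/2)} \qquad \text{for every } z\in E_0,\ N\geq N_0.$$

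Combining this with $\mu_I*\nu \leq r^{-1}(\mu*\nu)$ yields both $(\mu_I*\nu)(E_0^c)\leq \delta/r$ and, for every $z\in E_0$ and $N\geq N_0$,
$$-\log(\mu_I*\nu)(D_N(z)) \;\geq\; N(\dim(\mu*\nu)-\varepsilon/2) + \log r.$$
Writing the entropy as an integral $H_N(\mu_I*\nu) = \int -\log(\mu_I*\nu)(D_N(z))\,d(\mu_I*\nu)(z)$, and noting that the integrand is non-negative so the $E_0^c$ contribution can be dropped, I obtain
$$H_N(\mu_I * \nu) \;\geq\; (1-\delta/r)\bigl[N(\dim(\mu*\nu) - \varepsilon/2) + \log r\bigr].$$

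Dividing by $N$, it remains to choose parameters so the right-hand side exceeds $\dim(\mu*\nu)-\varepsilon$. Taking $\delta$ small enough that $(\delta/r)(\dim(\mu*\nu)+1) < \varepsilon/4$, and then enlarging $N_0$ so that $|\log r|/N_0 < \varepsilon/4$, the two error terms are each bounded by $\varepsilon/2$ and the conclusion $\tfrac{1}{N}H_N(\mu_I*\nu)\geq \dim(\mu*\nu)-\varepsilon$ follows for every $N\geq N_0$, uniformly in $I$. The only remotely technical step is the Egorov-type selection of $E_0$, but this is a direct unpacking of the definition of Hausdorff dimension of a measure; the rest is book-keeping with the constants. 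Non-atomicity of $\mu$ is not actively used in this argument, beyond ensuring that $\mu_I$ is well-defined for any interval $I$ with $\mu(I)\geq r$.
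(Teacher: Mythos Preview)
Your argument is correct, and it is genuinely different from the paper's. The paper proves the lemma by showing that the map $(a,b)\mapsto \frac{1}{N}H_N(\mu_{[a,b]}*\nu)$ is equicontinuous on the compact set $K_r=\{(a,b):\mu([a,b])\geq r\}$, uniformly in $N$; the non-atomicity of $\mu$ is used precisely here, to control $\mu(I_\delta)/\mu([a,b])$ for nearby intervals. It then combines this equicontinuity with the pointwise bound $\liminf_N \frac{1}{N}H_N(\mu_{[a,b]}*\nu)\geq \dim(\mu*\nu)$ and a compactness argument to upgrade pointwise convergence to uniform convergence over $K_r$.

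Your route bypasses all of this by exploiting the single inequality $\mu_I*\nu \leq r^{-1}(\mu*\nu)$, which immediately transfers both the mass bound on $E_0^c$ and the dyadic-cell bound on $E_0$ from $\mu*\nu$ to $\mu_I*\nu$, uniformly in $I$. The Egorov-type selection of $E_0$ and $N_0$ then does all the work. This is more direct, avoids the somewhat delicate equicontinuity estimate, and, as you observe, does not use non-atomicity of $\mu$ at all. The paper's approach, on the other hand, yields a slightly stronger byproduct (genuine uniform continuity of the entropy in the interval endpoints), which it later invokes in a mild variant in the proof of Claim~\ref{claim-1}; your argument gives exactly what the lemma states and nothing more, which is enough for the application in Claim~\ref{claim-2}.
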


\begin{proof}
Let $K_r = \lbrace (a,b)\in \R^2:\ \mu([a,b])\geq r \rbrace$. We will show that for every $r>0$ such that $K_r$ is nonempty, $\frac{1}{N}H_N(\mu_{[a,b]} * \nu)$ is continuous in $(a,b)\in K_r$ (in the subspace topology) and the continuity is uniform in $N$.

Let $r, \varepsilon > 0$ be given, and let $\delta > 0$ be small with respect to $\varepsilon$ and $r$. Fix $(a_0,b_0) \in K_r$ and let $I_\delta$ be the largest interval contained in $[a,b]$ for every $(a,b)\in B((a_0,b_0), \delta)$. If $\delta$ is small enough, we have $\frac{\mu(I_\delta)}{\mu([a,b])} \in [1-\varepsilon,1+\varepsilon]$ for every $(a,b)\in B((a_0,b_0),\delta)$, by non-atomicity of $\mu$ and the assumption $\mu([a_0,b_0])\geq r$. 

Now, for every $(a,b), (a',b') \in B((a_0,b_0), \delta)$, applying bilinearity of convolution and Lemma \ref{lemma-concavityofentropy} in the first and second-to-last inequalities, we have
\begin{align*}
&H_N(\mu_{[a,b]}*\nu) \\
\leq\ &\frac{\mu(I_\delta)}{\mu([a,b])} H_N(\mu_{I_\delta}*\nu) + \left(1-\frac{\mu(I_\delta)}{\mu([a,b])}\right) H_N(\mu_{[a,b]\setminus I_\delta}*\nu) + \varepsilon \\
\leq\ &(1+\varepsilon) \frac{\mu(I_\delta)}{\mu([a', b'])} H_N (\mu_{I_\delta}*\nu) + 2N\varepsilon \\
\leq\ &(1+\varepsilon) H_N(\mu_{[a',b']}*\nu) + 2N\varepsilon \\
\leq\ &H_N(\mu_{[a', b']}*\nu) + 3N\varepsilon.
\end{align*}
Thus $(a,b) \mapsto \frac{1}{N} H_N(\mu_{[a,b]} *\nu)$ is continuous in $K_r$, uniformly in $N$. On the other hand, for every $(a,b)\in K_r$, 
$$
\dim (\mu*\nu) \leq \dim (\mu_{[a,b]}*\nu) \leq \liminf_{N\to\infty} \frac{1}{N} H_N(\mu_{[a,b]}*\nu)
$$ 
by \cite[Theorem 1.3]{FanLauRao2002}. By uniform continuity, this convergence is uniform in $K_r$, which is what we wanted to prove.
\end{proof}

\subsection{Magnifying measures}

For $x\in\R^d$ and $r\geq 0$, we let $T_x:\ y\mapsto y-x$ denote the translation taking $x$ to the origin, and $S_r: x \mapsto 2^r x$ the exponential ``magnification'' operation. We let $S_r^*$ denote the action of $S_r$ on measures equipped with normalisation and restriction, that is,
$$
S_r^*\mu(A) = \mu(B(0,2^{-r}))^{-1} \mu(2^{-r}A \cap B(0,2^{-r}))
$$
for every Borel set $A\subseteq B(0,1)$ and measure $\mu$ whose support contains the origin.

There is a natural way in which measures on $\R^d$ give rise to measures on $\cP(\R^d)$. Namely, consider the sequence $(S^*_r T_x\mu)_{r\geq 0}$, called the \emph{scenery} of $\mu$ at $x$. The statistical properties of this sequence are described by the accumulation points of the sequence $\left( \frac{1}{t}\int_0^t \delta_{S_r^* T_x\mu}\,dr \right)_{t\geq 1}$, called the \emph{scenery flow} of $\mu$ at $x$. The accumulation points of the scenery flow in the weak-$^*$ topology are measures on $\cP(\R^d)$, and are called \emph{tangent distributions} of $\mu$ at $x$. The measure $\mu$ is called \emph{uniformly scaling} if the scenery flow converges almost everywhere to a unique tangent distribution $P$. It is then said that $\mu$ generates $P$.

A remarkable result of Hochman \cite{Hochmanpreprint} is that tangent distributions at almost every point are \emph{fractal distributions}, objects which enjoy strong spatial invariance properties. We give the definition here for completeness, although we will not use it directly.
\begin{definition}
An $S_r^*$-invariant measure $P$ on $\cP(\R^d)$ is called a \emph{fractal distribution} if for any measurable $A$, $P(A)=1$ if and only if for every $r>0$, $P$-almost every $\nu$ satisfies 
$$
S_r^* T_x \nu \in A
$$
for $\nu$-almost every $x$ with $B(x,e^{-r})\subseteq B(0,1)$.
\end{definition}
\begin{theorem}[Theorem 1.7 of \cite{Hochmanpreprint}]\label{thm-fractaldistribution}
    Let $\mu$ be a Radon measure on $\R^d$. Then for $\mu$-almost every $x$, every tangent distribution at $x$ is a fractal distribution. 
\end{theorem}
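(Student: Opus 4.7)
The plan is to verify, for $\mu$-a.e.\ $x$, that any weak-$^*$ accumulation point $P$ of $\bigl(\tfrac{1}{t}\int_0^t \delta_{S_r T_x\mu}\,dr\bigr)_{t\geq 1}$ satisfies the two defining properties of a fractal distribution: $S_r$-invariance and the quasi-Palm condition.

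The $S_r$-invariance is essentially automatic from the Birkhoff-type structure of the tangent distribution. Indeed, for any bounded continuous $\phi\colon \cP(\R^d)\to\R$ and any fixed $s>0$,
\[
\Bigl|\int\phi\,dP - \int\phi\,d(S_sP)\Bigr| \;=\; \lim_n\Bigl|\tfrac{1}{t_n}\int_0^{t_n}[\phi(S_rT_x\mu)-\phi(S_{r+s}T_x\mu)]\,dr\Bigr| \;\leq\; \lim_n \tfrac{2s\Vert\phi\Vert_\infty}{t_n}=0,
\]
since the integrand telescopes over intervals of length $s$ at each end.

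The quasi-Palm property is the heart of the matter, and I would attack it first in a discrete analogue. Define the \emph{dyadic scenery} at $x$ as the sequence $(\mu^{D_n(x)})_n$ of rescalings of $\mu$ restricted to the level-$n$ dyadic cube containing $x$, and consider any weak-$^*$ accumulation point $\tilde P$ of the empirical measures $\tfrac{1}{N}\sum_{n=1}^N \delta_{\mu^{D_n(x)}}$. Then $\tilde P$ is shift-invariant on a natural symbolic space of nested measures and satisfies a discrete version of the quasi-Palm identity: for each $k\in\N$ and measurable $A\subseteq \cP(\R^d)$,
\[
\int \sum_{D\in\mathcal{D}_k} \mathbf{1}_A(\nu^D)\,\nu(D)\,d\tilde P(\nu) \;=\; \tilde P(A),
\]
which follows by a direct Fubini computation from the nested-zoom identity $(\mu^D)^{D'}=\mu^{D'}$ whenever $D'\subseteq D$ are dyadic cubes. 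Existence of the limit at $\mu$-a.e.\ $x$ can be arranged by a soft compactness argument together with the mass distribution principle.

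The final step is to pass from the dyadic tangent distribution $\tilde P$ to the continuous $S_r$-tangent distribution $P$. The standard route is to randomize: translate the dyadic grid by a vector drawn uniformly from the unit cube and let the scale run continuously over an interval of length $\log 2$; the resulting object simultaneously projects onto $P$ (via the continuous scenery) and a mixture of dyadic tangent distributions (via the randomization). Combining the discrete quasi-Palm identity with Lebesgue differentiation in the translation variable then delivers the continuous quasi-Palm property for $P$. The main obstacle throughout is a technical one, namely controlling \emph{boundary effects}: because $\mu^D$ truncates $\mu$ outside $D$, points close to $\partial D$ have distorted scenery statistics, and one must use Besicovitch-style covering arguments together with uniform estimates on the $\mu$-mass of thin boundary layers to show that the non-Palm-compatible contributions vanish in the limit. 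This is where essentially all the technical weight of the proof is concentrated.
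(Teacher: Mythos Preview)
The paper does not prove this theorem: it is quoted verbatim as \cite[Theorem~1.7]{Hochmanpreprint} and used as a black box. So there is nothing in the paper to compare your argument against.

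That said, your sketch is a fair high-level outline of Hochman's actual strategy in \cite{Hochmanpreprint}: one first establishes a discrete (CP-process) version of the quasi-Palm property for the dyadic scenery, then passes to the continuous $S_r$-flow by randomizing the grid and integrating over scale, with the boundary effects absorbed via Besicovitch density and the fact that thin annuli carry negligible mass at a.e.\ point. Two remarks on the sketch itself. First, the $S_r$-invariance argument as written only shows invariance of accumulation points along the chosen subsequence $t_n\to\infty$; this is fine, since that is exactly what a tangent distribution is, but the ``telescoping'' should be phrased for the specific subsequence rather than for all $t$. Second, and more substantively, the real work in Hochman's proof is not just the boundary control you mention but also showing that the \emph{centering} in the quasi-Palm condition (the requirement that $S_rT_x\nu\in A$ for $\nu$-a.e.\ $x$ with $B(x,e^{-r})\subseteq B(0,1)$) survives the passage from the dyadic picture, where one re-centers at the center of a sub-cube, to the continuous picture, where one re-centers at a $\nu$-typical point; this step uses the extended (non-restricted) version of the scenery and a separate argument that the restriction to $B(0,1)$ does not destroy the property. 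Your outline gestures at this with the randomization, but the identification of the continuous tangent distribution with a factor of the randomized dyadic one is itself a nontrivial lemma in \cite{Hochmanpreprint}.
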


The following is the only property of fractal distributions that we require directly.

\begin{lemma}\label{lemma-nolines0}
    Let $P$ be a fractal distribution. Then for $P$-a.e. $\nu$, any line $L$ with $\nu(L)>0$ must contain the origin. 
\end{lemma}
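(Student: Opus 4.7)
The plan is to argue by contradiction using the quasi-Palm property baked into the definition of a fractal distribution. Suppose, for the sake of contradiction, that there exists a Borel set $B\subseteq \mathcal{P}(\R^2)$ with $P(B) > 0$ such that each $\nu \in B$ admits a line $L_\nu$ with $\nu(L_\nu) > 0$ and $0 \notin L_\nu$. A routine countable decomposition combined with a measurable selection (using that the space of affine lines in the plane is standard Borel) produces constants $c_0, \delta_0 > 0$ and a Borel subset $B'\subseteq B$ with $P(B') > 0$ on which $\nu(L_\nu) \geq c_0$ and $d(0, L_\nu) \geq \delta_0$, with $\nu \mapsto L_\nu$ Borel measurable.

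The key geometric ingredient, which I would establish by a short computation from the definitions of $S_r$ and $T_x$, is the following transformation rule: for any $\nu$, any $r\geq 0$ and any $x$ with $B(x, e^{-r}) \subseteq B(0,1)$, the positive-mass lines of $S_r T_x\nu$ are exactly the lines $L' = 2^r (L - x)$, where $L$ ranges over positive-mass lines of $\nu$ satisfying $\nu(L \cap B(x, 2^{-r}))>0$; moreover, $L'$ passes through the origin if and only if $x \in L$. Thus lines-through-origin of the magnified measure correspond exactly to lines-through-$x$ of the original.

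Armed with this rule, I would apply the iff in the fractal-distribution property to the set $A:= \{\nu : \Phi(\nu)=0\}$, where $\Phi(\nu) := \nu\bigl(\bigcup\{L : \nu(L)>0,\ 0\notin L\}\bigr)$ is the $\nu$-mass carried by the union of bad lines. From $P(B') > 0$ we have $P(A) < 1$, so the iff yields some $r>0$ and a $P$-positive set of $\nu$ on which $\nu$-a.e.\ $x$ satisfies $S_r T_x \nu \notin A$, i.e., produces a magnification still carrying a positive-mass bad line. The transformation rule converts this into a quantitative statement about $\nu$: such $x$ must lie in the $2^{-r}$-neighborhood of some positive-mass line of $\nu$ while off it, and the magnification factor $2^r$ by which the distance-from-origin of any bad line is amplified prevents the bad-line mass from being compatible with $S_r$-invariance of $P$. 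Concretely, combining the equivalence of null sets $\tilde Q_r \equiv P$ (where $\tilde Q_r$ is the push-forward of $P$ under the scenery map $(\nu,x)\mapsto S_r T_x \nu$) with the scaling of $\Phi$ under this map, I would force $\Phi = 0$ $P$-almost everywhere, contradicting $P(B') > 0$.

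The main obstacle I anticipate is the bookkeeping in the last step: a single positive-mass line $L_\nu$ of $\nu$ gives rise to a bad line of $S_r T_x \nu$ only when $x$ lies within distance $2^{-r}$ of $L_\nu$ but off it, and conversely bad lines of $S_r T_x\nu$ may appear from lines of $\nu$ quite different from the distinguished $L_\nu$. Controlling this correspondence uniformly in $\nu$ and $x$, so that $\int \Phi\,dP$ can be related \emph{quantitatively} to $\int \Phi\,d\tilde Q_r$ rather than merely having the same nullity behavior, is the technical heart of the argument and where the ergodic content of the quasi-Palm property is fully exploited.
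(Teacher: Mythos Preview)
Your approach is genuinely different from the paper's, and the gap you yourself flag in the final paragraph is real and unresolved. The quasi-Palm ``iff'' only tells you that $P(A)=1$ fails; its negation gives some $r>0$ and a $P$-positive set of $\nu$ for which a \emph{$\nu$-positive} (not $\nu$-a.e., as you wrote) set of $x$ has $S_rT_x\nu\notin A$. From here you need a quantitative relation between $\Phi(\nu)$ and $\Phi(S_rT_x\nu)$, but there is none: $\Phi(S_rT_x\nu)$ is the renormalized mass in $B(x,2^{-r})$ of lines not through $x$, which is unrelated to the mass of lines not through the origin. Equivalence of null sets between $P$ and $\tilde Q_r$ gives no leverage, since both have positive mass on $\{\Phi>0\}$ and neither inequality follows. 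Your sketch never produces the contradiction.

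The paper's proof avoids this bookkeeping entirely by passing to an ergodic component $P'$ and using the fact (from Hochman's theory) that $P'$-a.e.\ $\eta$ is uniformly scaling and \emph{generates} $P'$. One first shows the analogous statement for atoms: if $\eta$ has an atom at $x\neq 0$, then $\eta|_{\{x\}}\ll\eta$ also generates $P'$, forcing $P'=\delta_{\delta_0}$, a contradiction. Then for lines: pick $\eta$ with $\eta(L_\eta)>0$, $0\notin L_\eta$; the restriction $\eta|_{L_\eta}$ also generates $P'$, and all its tangent measures are supported on the translate $L$ of $L_\eta$ through the origin. Hence $P'$-a.e.\ measure is supported on $L$, and any line off the origin meets $L$ in at most a point, which has measure zero by the atom step --- contradicting $P'(B)>0$. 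The restriction-and-regenerate trick replaces your quantitative $\Phi$-tracking with a clean structural statement about the support of $P'$.
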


\begin{proof}
    We first show that $P$-a.e. atomic measure is the point mass at the origin. This is almost immediate from the results of \cite{Hochmanpreprint}. 

    For a contradiction, let $A = \lbrace \nu:\ \nu(\lbrace x\rbrace) > 0,\ x \neq 0\rbrace$ and suppose that $P(A) >0$. Let $P'$ be an ergodic component of $P$ with $P'(A)>0$, and let $\eta\in A$ be a uniformly scaling measure generating $P'$. Indeed, by \cite[Theorem 1.6]{Hochmanpreprint}, $P'$-almost every measure is uniformly scaling. Let $x$ be such that $\eta(\lbrace x \rbrace)>0$. Since $\eta|_{\lbrace x\rbrace}\ll \eta$, also $\eta|_{\lbrace x\rbrace}$ generates $P'$ by an application of the Lebesgue-Besicovitch differentiation theorem, whence $P'$ is supported on the point mass at the origin. In particular, $P'(A) = 0$, a contradiction.  
    
    Now, to prove the statement of the lemma, suppose that there exists a set $B$ with $P(B)>0$ such that for every $\nu\in B$, there exists a line $L_\nu$ with $\nu(L_\nu)>0$ and $0\not\in L_\nu$. Let $P'$ be an ergodic component of $P$ with $P'(B)>0$, and let $\eta\in B$ be a uniformly scaling measure generating $P'$. Now, since $\eta(L_\eta) >0$, also the measure $\eta|_{L_\eta}$ generates $P'$. Let $L$ denote the line $L_\eta$ translated so that it contains the origin. Clearly, all tangent measures of $\eta|_{L_\eta}$ are supported on $L$, whence $P'$ is supported on measures which are supported on $L$. Since any line not containing the origin intersects $L$ in at most one point, such a line has $P'$-almost surely zero measure by the above. Thus $P'(B)  =0$ which is a contradiction. 
\end{proof}

\subsection{Conditional measures on lines}

For a measure $\mu$ on $\R^2$ and $\theta\in\RP$, let $\mu = \int \mu_{x}^\theta \,d\pi_\theta\mu(x)$ denote the disintegration of $\mu$ with respect to $\pi_\theta$. It is well-known that for almost every $x\in\theta$, the measure $\mu_x^\theta$ is supported on the line $x+\theta^\perp$, and that $\mu_x^\theta$ is the limit of normalized restrictions of $\mu$ on thinner and thinner tubes centered at $x+\theta^\perp$. It will be useful for us to know that these tubes can replaced by preimages of sets of relatively large measure. 

\begin{lemma}\label{prop-generalslices}
Let $\mu$ be a measure on $\R^2$, let $\delta>0$ and $\theta\in\RP$. For every $r>0$, let 
$$
\mathcal{I}(x,r,\delta) = \lbrace I \subseteq B(x, r):\ \frac{\pi_\theta\mu(I)}{\pi_\theta\mu(B(x,r))}\geq \delta \rbrace.
$$
Then for $\pi_\theta \mu$-almost every $x$, we have
$$
\lim_{r \to 0} \sup_{I\in \mathcal{I}(x,r,\delta)} d_{\rm LP}\left( \mu_{\pi_\theta^{-1}(I)},\ \mu_x^\theta\right) = 0.
$$ 
\end{lemma}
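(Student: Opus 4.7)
The plan is to reduce the statement to a uniform Lebesgue differentiation argument on the line, tested against a countable dense family of continuous functions. The measure $\pi_\theta\mu$ is a Radon measure on $\theta\simeq\R$, so the Lebesgue--Besicovitch differentiation theorem applies to it without any regularity assumption on $\mu$. The idea is that when $I\subseteq B(x,r)$ carries a definite fraction $\delta$ of the mass of $B(x,r)$, averaging an $L^1(\pi_\theta\mu)$-function over $I$ differs from averaging over $B(x,r)$ by at most a factor $1/\delta$ in the error estimate, so we lose nothing essential by replacing balls with such subsets.

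First, I would fix a countable family $\{f_k\}\subseteq C_c(\R^2)$ whose linear span is dense in $C_c(\R^2)$ in the uniform topology on compacts; convergence of probability measures in $d_{\rm LP}$ on a fixed compact set is then implied by convergence of the integrals $\int f_k\,d(\cdot)$ for every $k$. For each $k$ define the conditional expectation
$$
g_k(y) := \int f_k \, d\mu_y^\theta,
$$
which lies in $L^1(\pi_\theta\mu)$ by the disintegration formula, and let $E_k\subseteq\theta$ be the set of Lebesgue points of $g_k$ with respect to $\pi_\theta\mu$, i.e.\ points $x$ satisfying
$$
\lim_{r\to 0}\frac{1}{\pi_\theta\mu(B(x,r))}\int_{B(x,r)} |g_k(y)-g_k(x)|\,d\pi_\theta\mu(y) = 0.
$$
By the Lebesgue--Besicovitch differentiation theorem applied to the Radon measure $\pi_\theta\mu$ on the line, $\pi_\theta\mu(\theta\setminus E_k)=0$, so the intersection $E:=\bigcap_k E_k$ has full $\pi_\theta\mu$-measure.

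Now I would fix $x\in E$ and $I\in\mathcal{I}(x,r,\delta)$. By the disintegration formula,
$$
\int f_k\, d\mu_{\pi_\theta^{-1}(I)} = \frac{1}{\pi_\theta\mu(I)}\int_I g_k(y)\,d\pi_\theta\mu(y),
$$
and hence
$$
\left|\int f_k\,d\mu_{\pi_\theta^{-1}(I)} - \int f_k\,d\mu_x^\theta\right| \leq \frac{1}{\pi_\theta\mu(I)}\int_I |g_k(y)-g_k(x)|\,d\pi_\theta\mu(y) \leq \frac{1}{\delta\, \pi_\theta\mu(B(x,r))}\int_{B(x,r)} |g_k(y)-g_k(x)|\,d\pi_\theta\mu(y),
$$
where the last step uses $I\subseteq B(x,r)$ and $\pi_\theta\mu(I)\geq\delta\,\pi_\theta\mu(B(x,r))$. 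Since $x\in E_k$, the right-hand side tends to $0$ as $r\to 0$, and crucially the bound is uniform in $I\in\mathcal{I}(x,r,\delta)$ (the dependence on $I$ has been absorbed into the factor $1/\delta$).

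Finally, I would conclude as follows: pick any $\varepsilon>0$ and let $f_{k_1},\ldots,f_{k_m}$ be enough test functions so that closeness of the integrals $\int f_{k_j}\,d(\cdot)$ up to $\varepsilon$ implies $d_{\rm LP}$-closeness up to $\varepsilon$, on the common compact support (obtainable by a standard approximation since the relevant measures are all supported in a bounded set once $r$ is small). The previous step furnishes an $r_0$ such that for every $r<r_0$ and every $I\in\mathcal{I}(x,r,\delta)$, all these integrals are within $\varepsilon$ of the corresponding $\int f_{k_j}\,d\mu_x^\theta$, giving the required uniform convergence in $d_{\rm LP}$. The only delicate point, which is not really an obstacle but a bookkeeping issue, is ensuring that the relation between closeness of finitely many integrals and closeness in $d_{\rm LP}$ is truly uniform — this is handled by confining everything to a fixed compact neighbourhood of $x$ and using the metrizability of weak-$^*$ convergence there.
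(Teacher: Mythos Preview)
Your proof is correct and takes a somewhat different route from the paper's. Both arguments hinge on the same core observation---that the constraint $\pi_\theta\mu(I)\geq\delta\,\pi_\theta\mu(B(x,r))$ lets one bound averages over $I$ by $1/\delta$ times averages over $B(x,r)$, so Lebesgue differentiation at $x$ controls the error uniformly in $I$---but they differ in how the measure-valued map $y\mapsto\mu_y^\theta$ is handled. The paper invokes Lusin's theorem to produce compact sets $E_n$ on which $y\mapsto\mu_y^\theta$ is continuous, applies density-point differentiation to $\chi_{E_n}$, and then estimates $\mu_{\pi_\theta^{-1}(I)}(A^\varepsilon)$ directly in the definition of $d_{\rm LP}$ using that continuity. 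You instead test against a countable dense family $\{f_k\}\subseteq C_c(\R^2)$, reduce to Lebesgue differentiation of the scalar functions $g_k(y)=\int f_k\,d\mu_y^\theta$, and pass from uniform convergence of the integrals to $d_{\rm LP}$-convergence at the end. Your route is a bit more elementary (no Lusin, no direct manipulation of $d_{\rm LP}$), while the paper's argument is slightly more hands-on with the metric. The final ``bookkeeping'' you flag is genuinely routine: since all measures involved are probabilities and you have $\sup_{I\in\mathcal{I}(x,r,\delta)}|\int f_k\,d\mu_{\pi_\theta^{-1}(I)}-\int f_k\,d\mu_x^\theta|\to 0$ for every $k$, any sequence $r_n\to 0$, $I_n\in\mathcal{I}(x,r_n,\delta)$ gives vague and hence weak convergence $\mu_{\pi_\theta^{-1}(I_n)}\to\mu_x^\theta$, which is exactly $d_{\rm LP}$-convergence; the uniform statement follows by the usual subsequence argument.
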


\begin{proof}
For every $n\in\N$, let $E_n\subseteq \theta$ be the compact set given by Lusin's theorem with $\pi_\theta\mu(E_n) \geq 1-1/n$, on which the function $y \mapsto \mu_{y}^\theta$ is continuous. Since $\pi_\theta \mu(\bigcup_{n\in\N} E_{n}) = 1$, it suffices to prove the statement for almost every $x\in E_n$, for every $n\in\N$.  
Now, for almost every $x\in E_n$, if $B := B(x,r)$ and $I\in \mathcal{I}(x,r,\delta)$,
\begin{align*}
\frac{\pi_\theta\mu(I\cap E_n)}{\pi_\theta\mu(I)} &= 1 - \frac{\pi_\theta\mu(I\setminus E_n)}{\pi_\theta\mu(I)} \\
&\geq 1 -\frac{\pi_\theta\mu(B\setminus E_n)}{\pi_\theta\mu(B)} \frac{\pi_\theta\mu(B)}{\pi_\theta\mu(I)}\\
&\geq 1 - \frac{1}{\delta} \frac{\pi_\theta\mu(B\setminus E_n)}{\pi_\theta\mu(B)} \\
&= 1- o(1)/\delta
\end{align*}
by an application of the Lebesgue-Besicovitch differentiation theorem. Here $o(1)$ denotes a quantity that vanishes as $r\to 0$.

Therefore, for any Borel set $A\subseteq \R^2$, $I\in \mathcal{I}(x,r,\delta)$ and $\varepsilon>0$,
\begin{align*}
   \mu_{\pi_\theta^{-1} I}(A^\varepsilon) &= \frac{1}{\pi_\theta\mu( I)} \mu(\pi_\theta^{-1}I\cap A^\varepsilon)\\ &= \frac{1}{\pi_\theta\mu( I)} \int_{I} \mu_y^\theta(A^\varepsilon)\,d\pi_\theta\mu(y) \\
    &\geq \frac{1-o(1)/\delta}{\pi_\theta\mu(I\cap E_n)} \int_{ I\cap E_n} \mu_y^\theta(A^\varepsilon)\,d\pi_\theta\mu(y)  \\
    &\geq (1-o(1)/\delta) (\mu_{x}^\theta(A)-\varepsilon)
\end{align*}
if $r$ is small enough, by continuity of $y\mapsto \mu_y^\theta$. Similarly, 
$$
\mu_{\pi_\theta^{-1} I}(A) \leq \frac{1}{\pi_\theta\mu(I\cap E_n)} \int_{I\cap E_\varepsilon} \mu_y^\theta(A^\varepsilon)\,d\pi_\theta\mu(y) + o(1)/\delta \leq \mu_x^\theta(A^\varepsilon) + o(1)/\delta + \varepsilon,
$$
so $d_{\rm LP}(\mu_{\pi^{-1} I},\ \mu_x^\theta) \leq 2\varepsilon$ for small enough $r$. Taking $\varepsilon\to0$ completes the proof.
\end{proof}

\section{On local entropy averages}\label{section3}

In this section, we recall the local entropy averages of \cite{HochmanShmerkin2012} and introduce the different notions of magnifactions of measures that we use. Let $\mu$ and $\nu$ be self-affine measures associated to iterated function systems $\Phi =\lbrace \varphi_i(x) = A_i x + a_i\rbrace_{i\in\Gamma}$ and $\Psi = \lbrace \psi_j(x) = B_j x + b_j\rbrace_{j\in\Lambda}$, and denote by $\bmu$, $\bnu$ the associated Bernoulli measures. In this section, we impose no conditions on $\Phi$ and $\Psi$ other than that $\Vert A_i\Vert < 1$ and $\Vert B_j \Vert < 1$ for every $i,j$.

Let $\beta>0$. For every $\io\in\Gamma^\N$ and $\jo \in \Lambda^\N$, we define the ``stopping times'' 
\begin{align*}
i_k&=i_k(\io,\beta) = \min \lbrace n\in \N:\ \Vert A_{\io|_n} \Vert \leq 2^{-k\beta} \rbrace, \\
i_k&=i_k(\jo,\beta) = \min \lbrace n\in \N:\ \Vert B_{\jo|_n} \Vert \leq 2^{-k\beta} \rbrace.
\end{align*}
Although the stopping times on $\Gamma^\N$ and $\Lambda^\N$ are denoted by the same letter $i_k$, the choice of domain will always be clear from the context: for example, $\io|_{i_k} := \io|_{i_k(\io,\beta)}$ and $\jo|_{i_k} := \jo|_{i_k(\jo,\beta)}$. Note also that we omit the dependence on $\beta$ from the notation: In practice, $\beta$ will always be a large real number fixed beforehand. Below, we collect the different notions of scale-$k$ magnifications of $\mu$ and $\nu$. Note that they also depend on $\beta$.

\begin{notation}\label{notation-magnifications}
Let $\beta>0$. For each $\io \in \Gamma^\N$, $\jo\in \Lambda^\N$ and $k\in \N$, let
\begin{alignat*}{2}
&\nu_{\jo|_{i_k}} &&:= S_{k\beta}^* T_{\Pi(\jo)} \psi_{\jo|_{i_k}} \nu, \\
&\mu_{\io|_{i_k}} &&:= S_{k\beta}^* T_{\Pi(\io)} \varphi_{\io|_{i_k}} \nu, \\
&\mu^{D_{k\beta}(\Pi(\io))} &&:= S_{k\beta - \lfloor k\beta\rfloor} F\mu_{D_{\lfloor k\beta\rfloor}(\Pi(\io))},
\end{alignat*}
where $F$ denotes the unique homothety taking $D_{\lfloor k\beta\rfloor}(\Pi(\io))$, the level-$\lfloor k\beta\rfloor$ dyadic square that contains $\Pi(\io)$, onto $[-1,1)^2$.
\end{notation}

Note that $\mu_{\io|_{i_k}}$ and $\nu_{\jo|_{i_k}}$ are measures supported on ellipses that contain the origin and whose major semi-axes have length comparable to $1$ and are oriented in the directions $\theta(A_{\io|_{i_k}})$ and $\theta(B_{\jo|_{i_k}})$, respectively. On the other hand, $\mu^{D_{k\beta}(\Pi(\io))}$ is a measure supported on $[-2^{k\beta - \lfloor k\beta\rfloor},2^{k\beta - \lfloor k\beta\rfloor}]^2$. We require the following form of the local entropy averages of \cite{HochmanShmerkin2012}. For (non-integer) $\beta>0$, we write $H_\beta := H_{\lfloor \beta\rfloor}$.

\begin{theorem}\label{thm-localentropyaverages}
Let $\mu$ and $\nu$ be self-affine measures associated to $\Phi=\lbrace \varphi_i(x) = A_i x + a_i\rbrace_{i\in\Gamma}$ and $\Psi = \lbrace \psi_j(x) = B_j x + b_j\rbrace_{i\in\Lambda}$. For any $\varepsilon>0$, there exists $N_0\in\N$ such that if either
\begin{enumerate}
    \item[i)] $\liminf_{n\to\infty} \frac{1}{n} \sum_{k=0}^{n-1} \frac{1}{\beta} H_{\beta}(\mu^{D_{k\beta}(\Pi(\io))} * \nu_{\jo|_{i_k(\jo,\beta)}}) \geq \alpha$ or 
    \item[ii)] $\liminf_{n\to\infty} \frac{1}{n} \sum_{k=0}^{n-1} \frac{1}{\beta} H_{\beta}(\mu_{\io|_{i_k(\io,\beta)}} * \nu_{\jo|_{i_k(\jo,\beta)}}) \geq \alpha$
\end{enumerate}
holds for $\bmu\times \bnu$-a.e. $(\io,\jo)$ and some $\beta = \beta(\io,\jo, \varepsilon) \geq N_0$, then $\dim(\mu*\nu) \geq \alpha-\varepsilon$.
\end{theorem}
In applications, we will only apply i) with $\beta\in\N$, while in ii) we require the statement also for non-integer $\beta$. 

The proof is essentially in \cite{HochmanShmerkin2012}, but for the convenience of the reader we provide a sketch for the proof of the statement under the assumption that i) holds for almost every $(\io,\jo)$. The proof under the assumption ii) goes through similarly. We begin by recalling some terminology of \cite{HochmanShmerkin2012}. If $X$ is a finite set, $\rho>0$ and $d$ is the metric on $X^\N$ given by $d(\mathtt x, \mathtt y) = 2^{-\rho|\mathtt{x} \wedge \mathtt{x}'|}$, then $(X,d)$ (or just $X$, implicitly equipped with $d$) is called a $\rho$\emph{-tree}. Following \cite{HochmanShmerkin2012}, for $\rho$-trees $X^\N$ and $Y^\N$ we say that
\begin{itemize}
    \item[i)] a map $g: X^\N \to Y^\N$ is a \emph{tree morphism} if, for every $n\in \N$ and length-$n$ cylinder $[x_1\ldots x_n]\subseteq X^\N$ there exists a length-$n$ cylinder $[y_1 \ldots y_n]\subseteq Y^\N$ such that $g([x_1\ldots x_n])\subseteq [y_1\ldots y_n]$,
    \item[ii)] a map $h: X^\N \to \R^d$ is \emph{($c$-)faithful} if there exists a constant $c\geq1$ such that for any $[x_1 \ldots x_n] \subseteq X^\N$, no point in $f([x_1 \ldots x_n])$ is covered by more than $c$ of the sets $f([x_1 \ldots x_n x])$ for $x\in X$, and $f([x_1\ldots x_n])$ contains a ball of radius $(c^{-1} \rho)^n$ and is contained in a ball of radius $(c \rho)^n$.   
\end{itemize}
For a measure $\eta$ supported on a $\rho$-tree $X^\N$, we write
\begin{equation*}
    H_{k\rho}(\eta) := -\sum_{\mathtt x \in X^k} \eta([\mathtt x])\log \eta([\mathtt x]) = -\int \log\eta(B(\mathtt x, 2^{\rho k}))\,d\eta(\mathtt x),\qquad k\in\N.
\end{equation*}
\begin{proof}[Proof of Theorem \ref{thm-localentropyaverages}]
Let $\varepsilon>0$, and let $N_0\in\N$ be large with respect to $\varepsilon$. Assume first that there exists $\beta = \beta(\varepsilon) \geq N_0$ such that the condition i) holds for $\bmu\times\bnu$-almost every $(\io,\jo)$. The case where $\beta$ is allowed to depend on $(\io,\jo)$ is discussed near the end of the proof. As in \cite{HochmanShmerkin2012}, the idea is to lift $\mu \times \nu$ to a measure $\eta$ on a tree 
$$
\Sigma := \mathcal A^\N \times \mathcal B^\N
$$
with $\mathcal A$ and $\mathcal B$ being finite sets, and then find a finite collection of symbols $Y_\beta$, a tree morphism $g_\beta: \Sigma \to Y_\beta^\N$ and a faithful map $h_\beta: Y_\beta^\N\to \R^2$ such that $(h_\beta\circ g_\beta)\eta = \mu*\nu$. 

We begin by defining $\mathcal A$ and $\mathcal B$. The idea is to define $\mathcal A$ in such a way that $\mathcal A^k$ codes the dyadic covering of $[-1,1)^2$ via squares of diameter comparable to $2^{-\lfloor k\beta\rfloor}$, and $\mathcal B$ in such a way that $\mathcal B^k$ is the symbolic coding of the support of $\nu$ via cylinders of diameter comparable to $2^{-\lfloor k\beta\rfloor}$. We choose the following approach to formalise this. Recall the definition of $i_k = i_k(\jo,\beta)$. 

Let $\mathcal A \subset\lbrace 1,2,3,4\rbrace^* := \bigcup_{k\in\N}\lbrace 1,2,3,4\rbrace^k$ and $\mathcal B\subset \Lambda^*$ be the minimal finite sets such that 
\begin{enumerate}
\item for any $\ko\in \lbrace 1,2,3,4\rbrace^\N$ and $k\in\N$, $\ko|_{\lfloor k\beta\rfloor}\in\mathcal A^k$, 
\item for any $\jo\in\Lambda^\N$ and $k\in\N$, $\jo|_{i_k} \in \mathcal B^k$.
\end{enumerate}
Such $\mathcal A$ and $\mathcal B$ exist since the functions $\ko \mapsto |\ko|_{\lfloor (k+1)\beta\rfloor}| - |\ko|_{\lfloor k\beta\rfloor}|$ and $\jo\mapsto |\jo|_{i_{k+1}}| - |\jo|_{i_k}|$ are bounded uniformly in $k$, from below and above, by constant multiplicatives of $\log(\beta)$. On $\Sigma$, define the metric $d$ by 
\begin{equation*}
    d((\mathtt a, \mathtt b), (\mathtt a', \mathtt b')) = 2^{-\beta \min \lbrace |\mathtt a \wedge \mathtt a'|, |\mathtt b\wedge \mathtt b'|\rbrace}
\end{equation*}
so that $\Sigma$ is a $\beta$-tree (to match the definition of a $\rho$-tree we may identify $\Sigma$ with $(\mathcal A\times \mathcal B)^\N$). Denote by $\mathcal A_{\rm adm}^\N\subset\mathcal A^\N$ the set of \emph{admissible words} defined by 
\begin{equation*}
    \mathcal A^\N_{\rm adm}= \lbrace \mathtt a\in \mathcal A^\N:\ \text{there exists}\ \ko\in\lbrace1,2,3,4\rbrace^\N\ \text{s.t.}\ \ko|_{\lfloor k\beta\rfloor} = \mathtt a|_k\ \text{for every}\ k\in\N\rbrace.
\end{equation*}
The set $\mathcal A^\N_{\rm adm}$ is just the dyadic coding of $[-1,1)^2$ along the scales $2^{-\lfloor k\beta\rfloor}$, $k\in\N$.

Let $\lbrace D_{x_1},\ldots, D_{x_m}\rbrace$ be the (possibly overlapping) collection of dyadic squares indexed by $\mathcal A$, and let $F_{x_k}$ denote the homothety which sends $[-1,1]$ onto the closure of $D_{x_k}$. Note that $\diam D_{x_k}$ is comparable to $2^{-\lfloor\beta\rfloor}$ for every $k=1,\ldots, m$. Define the following projections:
\begin{alignat*}{3}
    &\Pi_\mathcal A&&: \mathcal A^\N \to [-1,1]^2,\ \ &&\mathtt a \mapsto \lim_{n\to\infty} F_{\mathtt a|_n}(0)\\
    &\Pi_\mathcal B&&: \mathcal B^\N \to \R^2, &&\mathtt b \mapsto \lim_{n \to \infty} \psi_{\mathtt b|_n}(0) \\
    &\Pi_\Gamma&&: \Gamma^\N \to \R^2, &&\io \mapsto \lim_{k \to \infty} \varphi_{\io|_k}(0) \\
    &\tilde{\Pi}&&: \Sigma \to \R^2\times\R^2, &&(\mathtt a, \mathtt b) \mapsto (\Pi_\mathcal A(\mathtt a), \Pi_\mathcal B(\mathtt b)).
\end{alignat*}
By possibly translating $\mu$ so that boundaries of dyadic squares have zero measure, $\mu$-almost every $x\in\R^2$ has a unique pre-image under $\Pi_\mathcal A|_{\mathcal A_{\rm adm}^\N}$ and so we may define the measure $\tilde{\mu} := (\Pi_\mathcal A|_{\mathcal A_{\rm adm}^\N})^{-1} \mu$ on $\mathcal A_{\rm adm}^\N \subset \mathcal A^{\N}$. The measure $\bnu$ on $\Lambda^\N$ will be identified with the measure on $\mathcal B^\N$ which is the image of $\bar{\nu}$ through the map 
\begin{equation*}
\jo \mapsto \mathtt b,\qquad \jo|_{i_k} = \mathtt b|_k\ \text{for every}\ k\in\N,
\end{equation*}
also denoted by $\bnu$. Let $\eta := \tilde{\mu} \times \bnu$ and note that $\tilde{\Pi}\eta = \mu\times \nu$. If we let $*$ denote the map $\R^2\times \R^2 \to \R^2, (x,y) \mapsto x+y$, then $(*\circ \tilde{\Pi})\eta = \mu*\nu$. 

We will now construct a tree $Y_\beta^\N$, a tree morphism $g_\beta:\Sigma \to Y_\beta^\N$ and a faithful map $h_\beta: Y_\beta^\N \to \R^2$ such that the following diagram commutes:
\[
  \begin{tikzcd}
    \Sigma \arrow{r}{g_\beta} \arrow[swap]{dr}{*\circ \tilde{\Pi}} & Y_\beta^\N \arrow{d}{h_\beta} \\
     & \R^2
  \end{tikzcd}
\]
Let $Y_\beta = \lbrace (\frac{k}{2^{\beta}}, \frac{\ell}{2^{\beta}}):\ 0 \leq k, \ell \leq \lceil 2^{\beta}\rceil-1 \rbrace$, and associate to each $x \in Y_\beta$ the closed square $Q_x$ of side length $10 \cdot 2^{-\beta}<1$ and bottom left corner at $x$. Then $[-1,1]^2 \subseteq \bigcup_{x\in Y_\beta} Q_x$. Let the metric on $Y_\beta^\N$ be given by $d(\mathtt{x},\mathtt{y})=2^{-\beta|\mathtt{x}\wedge\mathtt{y}|}$ for $\mathtt{x},\mathtt{y}\in Y_\beta^\N$. Writing $L_x$ for the homothety which sends $[-1,1]^2$ onto $Q_x$, the map $h_\beta: Y_\beta^\N \to [-1,1]^2$ defined by $h_N(x_1 x_2 x_3 \ldots) = \lim_{n \to \infty} L_{x_1} \circ \cdots \circ L_{x_n}(0)$ is a faithful surjection.

We construct $g_\beta: \Sigma \to Y_\beta^\N$ iteratively. First observe that for any $(a,b)\in\mathcal A\times \mathcal B$, we have $\Pi_\mathcal A[a] + \Pi_\mathcal B[b] \subseteq Q_{x_1}$ for some $x_1\in Y_\beta$. This is because by the definitions of $\mathcal A$ and $\mathcal B$, $\diam(\Pi_\mathcal A([a])) \leq 2\cdot2^{-\lfloor \beta\rfloor}\leq 4\cdot2^{-\beta}$ and $\diam (\Pi_{\mathcal B}([b])) \leq \diam \psi_b(B(0,1)) \leq 2^{-\beta}$, and therefore 
\begin{equation*}
\diam (\Pi_\mathcal A[a] + \Pi_\mathcal B[b]) \leq 5\cdot2^{-\beta}.
\end{equation*}
Supposing that the sequence $x_1 \ldots x_n \in Y_N^n$ has been determined in such a way that $\Pi_\mathcal A[\mathtt a|_n] + \Pi_\mathcal B[\mathtt b|_n] \subseteq Q_{x_1\ldots x_{n}}$, we choose $x_{n+1}\in Y_N$ so that $\Pi_\mathcal A[\mathtt a|_{n+1}] + \Pi_\mathcal B[\mathtt b|_{n+1}] \subseteq Q_{x_1\ldots x_n x_{n+1}} =: L_{x_1\ldots x_n}(Q_{x_{n+1}})$. This is again possible since 
\begin{equation*}
    \diam(\Pi_\mathcal A[\mathtt a|_{n+1}] + \Pi_\mathcal B[\mathtt b|_{n+1}]) \leq 5\cdot 2^{-(n+1)\beta}
\end{equation*}
and $Q_{x_1\ldots x_n} \subset \bigcup_{x\in Y_\beta} Q_{x_1\ldots x_n x}$. Since $\diam(Q_{x_1 \ldots x_n}) \to 0$, we may let $g_\beta: \Sigma \to Y_\beta^\N$ be defined by $g_\beta(\mathtt a,\mathtt b) = x_1x_2\ldots$. It is evident from the construction that $g_\beta([\mathtt a|_n]\times [\mathtt b|_n]) \subseteq [x_1x_2\ldots x_n]$, that is, sets of the form $[\mathtt a|_n]\times [\mathtt b|_n]$ are mapped into cylinders of $Y_\beta^\N$. Moreover, $h_\beta \circ g_\beta = * \circ \tilde{\Pi}$. 

Now, since the $h_\beta$ is faithful, \cite[Proposition 5.3]{HochmanShmerkin2012} asserts that it essentially preserves entropy in the sense that
\begin{equation}\label{eq-firstplace}
|H_{(k+1)\beta}(g_\beta\eta_{[\mathtt a|_k] \times [\mathtt b|_k]}) - H_{(k+1)\beta}(\Pi_{\mathcal A}\tilde{\mu}_{[\mathtt a|_k]}*\Pi_\mathcal B\bnu_{[\mathtt b|_k]})| \leq O(1)
\end{equation}
for every $k\in\N$. Here and in the following, we use the ``big-$O$'' notation $A \leq O(B)$ to indicate that $A \leq C B$ for some universal constant $C> 0$ independent of $A$ and $B$. In particular, in \eqref{eq-firstplace} the constant $O(1)$ does not depend on $\beta$, $\mathtt a$ or $\mathtt b$. Let $\io \in \Gamma^\N$ and $\jo\in\Lambda^\N$ be the unique words (which exist for almost every $\mathtt a,\mathtt b$) such that $\Pi_{\mathcal A}(\mathtt a)= \Pi_\Gamma(\io)$ and $\mathtt b|_k = \jo|_{i_k}$ for $k\in\N$, and note that 
\begin{align*}
    &\Pi_\mathcal A\tilde{\mu}_{[\mathtt a|_k]} = \mu_{\Pi_\mathcal A([\mathtt a|_k])} = \mu_{D_{\lfloor k\beta\rfloor} (\Pi_\Gamma(\io))}, \\
    &\Pi_\mathcal B \bnu_{[\mathtt b|_k]} = \psi_{\jo|_{i_k}} \nu.
\end{align*}
Using the bound $|H_n(S_m^* \zeta) - H_{n+m}(\zeta_{B(0,2^{-m})})|\leq O(1)$ which holds for any $n,m>0$ and probability measure $\zeta$ on $B(0,1)$, and Lemma \ref{lemma-almostcontinuity}, it follows from \eqref{eq-firstplace} that for every $k$, for $\mathtt i,\mathtt j$ chosen as above,
\begin{equation}\label{eq-secondplace}
|H_{(k+1)\beta}(g_\beta\eta_{[\mathtt a|_k] \times [\mathtt b|_k]}) - H_{\beta}(\mu^{D_{k\beta}(\Pi_\Gamma(\io))} * \nu_{\jo|_{i_k}})| \leq O(1).
\end{equation}
Therefore, by the assumption i) and the local entropy averages for the measure $\eta$ on the tree $\Sigma$ \cite[Theorem 4.4]{HochmanShmerkin2012}, we have $\dim g_\beta \eta \geq \alpha$. Finally, taking $g_\beta\eta$ through the faithful map $h_\beta$ yields $\mu * \nu$ and distorts the dimension by at most $O(1/\beta)$, by \cite[Proposition 5.2]{HochmanShmerkin2012}, which shows that $\dim(\mu*\nu) \geq \alpha - O(1/\beta) \geq \alpha-\varepsilon$ as long as $\beta\geq N_0$ is large enough, which is what we wanted to show.

Suppose then that the condition i) holds for $\bmu\times\bnu$-almost every $(\io,\jo)$ and for some $\beta= \beta(\io,\jo,\varepsilon) \geq N_0$ depending on $\io$ and $\jo$. Let $\varepsilon' < \varepsilon$ be small enough so that 
\begin{equation}\label{eq-choiceofep'}
\dim (\mu*\nu) \geq \inf \lbrace \dim E\subseteq \R^2: \mu*\nu(E) \geq \varepsilon'\rbrace - \varepsilon.
\end{equation}
By Egorov's theorem, we find a set $E\subseteq \Gamma^\N\times\Lambda^\N$ and an integer $N_1\geq N_0$ such that $\bmu\times\bnu(E) \geq 1-\varepsilon'/2$ and the condition i) holds for every $(\io,\jo)\in E$ and every $\beta\geq N_1$. Arguing as above with the measure $\eta$ replaced by the measure $\eta' := ((\Pi_\mathcal A|_{\mathcal A_{\rm adm}^\N})^{-1}\circ\Pi_\Gamma, {\rm Id})(\bmu\times\bnu)_E$ on $\Sigma$, where 
\begin{equation*}
((\Pi_\mathcal A|_{\mathcal A_{\rm adm}^\N})^{-1} \circ \Pi_\Gamma, {\rm Id}): (\mathtt i, \mathtt j) \mapsto ((\Pi_\mathcal A|_{\mathcal A_{\rm adm}^\N})^{-1}(\Pi_\Gamma(\io)), \jo),
\end{equation*}
the proof proceeds identically up until the equation \eqref{eq-firstplace} where we used the identity $(*\circ\tilde{\Pi})(\eta_{[\mathtt a|_k]\times[\mathtt b|_k]}) = \Pi_\mathcal A\tilde{\mu}_{[\mathtt a|_k]}*\Pi_\mathcal B\bnu_{[\mathtt b|_k]}$. Unfortunately, this is no longer true if $\eta$ is replaced by $\eta'$ because of the restriction to $E$ involved in the definition, and instead of \eqref{eq-firstplace}, we arrive at
$$
|H_{(k+1)\beta}(g_\beta\eta'_{[\mathtt a|_k]\times[\mathtt b|_k]}) - H_{(k+1)\beta}((*\circ\tilde{\Pi})(\eta'_{[\mathtt a|_k]\times[\mathtt b|_k]}))| \leq O(1).
$$

However, since $\eta'\ll\eta$, it follows from an application of the Lebesgue-Besicovitch differentiation theorem (cf. \cite[Proposition 3.8]{Hochmanpreprint}) that 
$$
\lim_{k\to\infty} \sup|\eta'_{[\mathtt a|_k]\times[\mathtt b|_k]}(A) - \eta_{[\mathtt a|_k]\times[\mathtt b|_k]}(A)| =0
$$
for $\eta'$-almost every $(\mathtt a,\mathtt b)\in\Sigma$, where the supremum is taken over all Borel sets $A\subset \Sigma$. In particular, 
$$
\lim_{k\to\infty} |H_{(k+1)\beta}((*\circ\tilde{\Pi})(\eta'_{[\mathtt a|_k]\times[\mathtt b|_k]})) - H_{(k+1)\beta} ((*\circ\tilde{\Pi})(\eta_{[\mathtt a|_k]\times[\mathtt b|_k]}))| = 0
$$
where $(*\circ\tilde{\Pi})(\eta_{[\mathtt a|_k]\times[\mathtt b|_k]}) = \Pi_\mathcal A\tilde{\mu}_{[\mathtt a|_k]}*\Pi_\mathcal B\bnu_{[\mathtt b|_k]}$. Combining this with \eqref{eq-secondplace}, the assumption i) and the local entropy averages for the measure $\eta'$ on $\Sigma$ \cite[Theorem 4.4]{HochmanShmerkin2012}, we conclude that $\dim (*\circ\tilde{\Pi}) \eta' \geq \alpha-O(1/\beta) \geq \alpha - \varepsilon$ when $\beta$ is large enough, and so $\dim(\mu*\nu) \geq \alpha -2 \varepsilon$ by \eqref{eq-choiceofep'}.
\end{proof}

\section{Proof of Theorems \ref{theorem-main} and \ref{theorem-main-2}}\label{section4}

In this section, we state our key technical results, and apply them to prove Theorems \ref{theorem-main} and \ref{theorem-main-2}. We begin with some notation.

Recall that we defined $\RP$ as the collection of one-dimensional subspaces of $\R^2$. Through the identification $\RP \cong [0,\pi)$ we use $\theta \in [0,\pi)$ to denote both angles and lines (making that angle with the positive $x$-axis). Recall that $R_\theta$ denotes the ``shortest'' rotation which takes $\theta$ onto $0^\perp$ (the $y$-axis), and for $\theta = 0$, we choose the clockwise rotation. 

For almost every $\io \in \Gamma^\N$ and $\theta\in\RP$, we let $\mu_{\io, \theta}$ denote the probability measure supported on the $y$-axis, obtained by taking the conditional measure $\mu_{\pi_\theta \Pi(\io)}^\theta$ from the disintegration $\mu = \int \mu_x^\theta \,d\pi_{\theta}\mu(x) = \int \mu_{\pi_\theta \Pi(\io)}^\theta\,d\bmu(\io)$ supported on the line $\theta^\perp + \Pi(\io)$, translating it by $T_{\Pi(\io)}$ and finally rotating it by $R_\theta$. That is,
\begin{equation}\label{eq-muitheta}
\mu_{\io,\theta} = R_\theta T_{\Pi(\io)} \mu_{\pi_\theta \Pi(\io)}^\theta.
\end{equation}
For $t \geq 0$, write $\mu_{\io, \theta, t} = S_t^* \mu_{\io, \theta}$. The measures $\mu_{\io,\theta}$ are occasionally called \emph{slices} of $\mu$. 

Let now $\mu$ be a self-affine measure associated to an iterated function system $\Phi$ as in Theorem \ref{theorem-main}. The strong separation condition asserts the existence of a bounded open set $V \neq \emptyset$ whose closure is mapped into disjoint subsets of $V$ by the maps $\varphi_i$. For the sake of notational simplicity, we suppose that $V = {\rm int}(B(0,1))$; Since all the statements in the following are local in nature, everything works also for a general $V$ by restricting onto small balls centered in the point of interest. 

Let $\beta$ be a large real number so that $2^{-\beta} < \min \lbrace d(\varphi_i(B(0,1)), \varphi_j(B(0,1))):\ i,j\in\Gamma\rbrace$. For $\io\in\Gamma^\N$ and $k\in\N$, let $\ell_k = \ell_k(\io,\beta)\in\N$ be defined by
\begin{equation}\label{eq-lk}
\ell_k(\io) = \max \lbrace n:\ \alpha_1(A_{\io|_n}) \geq 2^{-(k-1)\beta} \rbrace
\end{equation}
and notice that for every $k$, 
$$
B(\Pi(\io), 2^{-k\beta}) \cap \Pi(\Gamma^\N) \subseteq \varphi_{\io|_{\ell_k}}(B(0,1)).
$$
Our main geometric observation is that for any self-affine measure $\mu$ with $\dim\mu > 1$, the measures $\mu^{D_{k\beta}(\Pi(\io))}$ have a fiber structure in the sense that $\pi_{\theta(\io)^\perp} \mu^{D_{k\beta}(\Pi(\io))}$ is very close to a slice of the original measure $\mu$, in a direction typical to the Furstenberg measure $\mu_F$. This is true also when $\dim \mu\leq 1$, but in this case the proof is slightly different. 

Throughout the paper, we adopt the convention that $-\mu$ denotes the push-forward of $\mu$ through the map $x\mapsto -x$. Let $\pi^2: \R^2\to 0^\perp$ denote the orthogonal projection onto the $y$-axis. If $\nu$ is a measure and $I$ is an interval on the $y$-axis with $\nu(I)>0$, we let $\nu^I := F \nu_I$, where $F$ denotes the unique homothety on the $y$-axis taking the closure of $I$ onto $\lbrace 0\rbrace\times[-1,1]$. We say that two iterated function systems $\Phi = \lbrace \varphi_i\rbrace_{i\in\Gamma}$ and $\Psi = \lbrace \psi_i\rbrace_{i\in\Gamma}$ are \emph{linearly conjugate} if there exists a non-singular linear map $A$ such that $\varphi_i = A \circ \psi_i \circ A^{-1}$. In this case we say that $\Phi$ is a conjugate of $\Psi$ through $A$. 

\begin{proposition}[Fiber structure]\label{prop-parempijono}
Let $\Phi=\lbrace \varphi_i(x)= A_ix +a_i\rbrace_{i\in\Gamma}$ be a self-affine IFS with irreducibility, domination and strong separation conditions. For any $\varepsilon>0$, the following holds after conjugating $\Phi$ through a linear map:

Let $\mu$ be a fully supported self-affine measure associated to $\Phi$ with $\dim\mu > 1$. For $\bmu$-almost every $\io\in\Gamma^\N$, any large enough integer $N$ and all $\theta$ in a set of positive $\mu_F$-measure, there exists a sequence of intervals $(I_k)_k$ of length at least $1/2$ on the $y$-axis and a set $\cN_\varepsilon\subseteq\N$ with $\liminf_{n\to\infty} \frac{\#(\cN_\varepsilon\cap[0,n])}{n} \geq 1-\varepsilon$ such that
$$
d_{\rm LP}(\pi^2\mu^{D_{k\beta}(\Pi(\io))},\ \left(\rho(\ell_k, (\io,\theta)) \mu_{\sigma^{\ell_k}\io,\, A_{\io|_{\ell_k}}^{-1}\theta,\, k\beta -1+ \log \alpha_1(\io|_{\ell_k})}\right)^{I_k}) < \varepsilon
$$
for every $k\in\cN_\varepsilon$, where $\ell_k$ is as in \eqref{eq-lk} and $\rho: \N \times (\Gamma^\N \times \RP) \to \lbrace -1,1\rbrace$ is a cocycle.
\end{proposition}
The reason for having to pass to a conjugate IFS is purely technical and it arises only because in applications it is convenient to have the conclusion stated for magnifications of $\mu$ along the standard dyadic squares; We leave it for the interested reader to verify that if $\pi^2$ is replaced by $\pi_{\theta(\io)^\perp}$ and one chooses to magnify along balls centered at $\Pi(\io)$ instead of dyadic squares, then no domination condition, no passing to a conjugate IFS nor the restriction to $I_k$ is required. Such a modification is not required in the proof of Theorem \ref{theorem-main}, however. The proof of the proposition is given in Section \ref{section5}.

Let us briefly compare Proposition \ref{prop-parempijono} with the existing results on the scenery of self-affine measures. First, the result of Ferguson et. al. \cite{FergusonFraserSahlsten2015} deals with measures on Bedford-McMullen carpets. In this case, the Furstenberg measure $\mu_F$ equals a point mass, and the defining matrices involve no reflections, whence the cocycle $\rho$ above may be replaced by the identity and the map $M$ is just the left shift on the first argument. Using the carpet structure, the authors also describe the fibers of $\mu^{D_{k\beta}(\Pi(\io))}$ with respect to $\pi^2$, which seems to be difficult in our setting. 

The result of Kempton \cite{Kempton2015}, on the other hand, deals with self-affine measures under the irreducibility and domination conditions. In addition, the methods of \cite{Kempton2015} assume that the orthogonal projection of $\mu$ in $\mu_F$-almost every direction is absolutely continuous, and that the defining affine contractions preserve orientation. The projection condition allows Kempton to deduce that each fiber of $\mu^{D_{k\beta}(\Pi(\io))}$ with respect to $\pi^2$ is just the Lebesgue measure, while assuming the defining contractions to be orientation-preserving allows the cocycle $\rho$ to be replaced with the identity. Finally, Proposition \ref{prop-parempijono} considers magnifications of $\mu$ along the dyadic squares instead of balls centered at $\Pi(\io)$, which brings in some additional technical considerations regarding the distribution of $\mu$ near the boundaries of dyadic squares.

\subsection{Estimating the local entropy averages}

Recall from Section \ref{section3} that we have to find a lower bound for either $\frac{1}{\beta}H_\beta(\mu^{D_{k\beta}(\Pi(\io))} * \nu_{\jo|_{i_k}})$ or $\frac{1}{\beta}H_\beta(\mu_{\io|_{i_k}} * \nu_{\jo|_{i_k}})$, for most $k$. Bounding the second quantity is easier and can be done without the domination condition, but the bound we obtain is weaker:

\begin{claim}\label{claim-easycase}
Let $\mu$ and $\nu$ be as in Theorem \ref{theorem-main-2}, and let $\varepsilon>0$. For $\bmu$-a.e. $\io\in\Gamma^\N$, $\bnu$-a.e. $\jo\in\Lambda^\N$ and any $N_0\in\N$, there exists a real number $\beta\geq N_0$ such that 
$$
\liminf_{n\to\infty} \frac{1}{n}\sum_{k=0}^{n-1} \frac{1}{\beta} H_\beta(\mu_{\io|_{i_k}} * \nu_{\jo|_{i_k}}) \geq \min\lbrace 1,\dim\mu\rbrace + \min \lbrace 1,\dim \nu\rbrace - \varepsilon.
$$
\end{claim}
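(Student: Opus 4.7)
The plan is to estimate $H_N(\mu_{\io|_{i_k}}*\nu_{\jo|_{i_k}})$ from below for large $k$ by exploiting three facts. First, by the hyperbolicity assumption together with \cite[Theorem B]{BochiGourmelon2009}, $\alpha_1(A_{\io|_{i_k}})/\alpha_2(A_{\io|_{i_k}})\to 0$ for $\bmu$-a.e.\ $\io$, so $\mu_{\io|_{i_k}}$ concentrates on a thinner and thinner tube along $\theta(A_{\io|_{i_k}})\to\theta(\io)$, and analogously for $\nu_{\jo|_{i_k}}$ along $\theta(B_{\jo|_{i_k}})\to\theta(\jo)$. Second, by irreducibility the Furstenberg measures $\mu_F$ and $\nu_F$ are non-atomic, so $\theta(\io)\neq\theta(\jo)$ for $(\bmu\times\bnu)$-a.e.\ $(\io,\jo)$, and after restricting to a compact set $G_\delta$ of measure $\geq 1-\delta$ we may assume $d(\theta(\io),\theta(\jo))\geq c_\delta>0$ throughout $G_\delta$. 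Third, a direct SVD computation shows that $\pi_{\theta(A_{\io|_{i_k}})}\mu_{\io|_{i_k}}$ coincides, up to a translation and a bounded rescaling by the factor $2^{kN}\alpha_2(A_{\io|_{i_k}})\in[\tfrac12,2]$, with the projection $\pi_{\theta^*(A_{\io|_{i_k}})}\mu$ identified with the line $\theta(A_{\io|_{i_k}})$; by Theorem \ref{theorem-selfaffineprojections}, this latter measure has dimension $\min\{1,\dim\mu\}$.

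Combining these, for $(\io,\jo)\in G_\delta$ and $k$ so large that the transverse extents of the tubes are much less than $2^{-N}$, the linear map $\theta(\io)\times\theta(\jo)\to\R^2$, $(x,y)\mapsto x+y$, is a bi-Lipschitz isomorphism with constants depending only on $c_\delta$. Together with the chain rule (Lemma \ref{lemma-chainruleapplication}) applied with the orthogonal projection $\pi_{\theta(\io)^\perp}$, this yields the near-product estimate
$$
H_N(\mu_{\io|_{i_k}}*\nu_{\jo|_{i_k}})\geq H_N(\pi_{\theta(A_{\io|_{i_k}})}\mu_{\io|_{i_k}})+H_N(\pi_{\theta(B_{\jo|_{i_k}})}\nu_{\jo|_{i_k}})-O_{c_\delta}(1).
$$
By the third ingredient above, each term on the right equals, up to a bounded affine distortion and an additive $O(1)$, the entropy of a projection of $\mu$ or $\nu$; using the general estimate $\liminf_N\tfrac1NH_N(\eta)\geq\dim_H\eta$, each such term is at least $N(\min\{1,\dim\mu\}-\varepsilon)$ respectively $N(\min\{1,\dim\nu\}-\varepsilon)$ for $N$ large. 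Dividing by $N$, taking Ces\`aro averages over $k$, and letting $\delta\to 0$ completes the argument.

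The main obstacle will be the transfer step hidden in the penultimate sentence: at a fixed scale $2^{-N}$, dyadic entropy is not continuous under $d_{\rm LP}$-small perturbations, so passing from the asymptotic bound $\liminf_N\tfrac1NH_N(\pi_{\theta^*(\io)}\mu)\geq\min\{1,\dim\mu\}$ to the analogous bound for $\pi_{\theta^*(A_{\io|_{i_k}})}\mu$ that is uniform in $k$ requires more than just Lemma \ref{lemma-suuntasuppeneekaikkialla}. I expect one must combine the uniform convergence $\theta^*(A_{\io|_{i_k}})\to\theta^*(\io)$ with a uniform Frostman-type regularity of the family $\{\pi_\theta\mu\}_{\theta\in\RP}$ coming from the Ledrappier--Young formula and dimension conservation (Theorem \ref{thm-dimensionconservation}), which furnishes a common rate of convergence $\tfrac1NH_N(\pi_\theta\mu)\to\min\{1,\dim\mu\}$ valid for $\theta$ in a set of large $\mu_F$-measure.
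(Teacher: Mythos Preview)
Your overall strategy coincides with the paper's: both exploit that $\mu_{\io|_{i_k}}$ and $\nu_{\jo|_{i_k}}$ are supported on thin ellipses with transverse limiting orientations, project onto those axes to reduce to projections of $\mu$ and $\nu$, invoke the strong projection theorem, and combine via the chain rule. The transversality step (your $G_\delta$ argument via non-atomicity of Furstenberg measures) matches the paper's appeal to ``different limiting directions''.

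Where the arguments diverge is precisely at the obstacle you flag. You are right that the projection direction you denote $\theta^*(A_{\io|_{i_k}})$ --- the right singular direction of $A_{\io|_{i_k}}$ --- does \emph{not} converge as $k\to\infty$; under domination it is governed by the \emph{tail} $\sigma^{i_k}\io$ rather than the initial segment, so it genuinely moves with $k$. Hence uniform control over $k$ of $\tfrac{1}{N}H_N(\pi_{\theta_k}\mu)$ is needed. However, your proposed resolution via Ledrappier--Young and dimension conservation is heavier than necessary, and it is not clear how those tools would yield the uniform rate you describe.

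The paper's resolution is lighter and purely ergodic-theoretic. It packages the sequence of magnifications through the map $G$ on the suspension $\mathcal{Z}_\Psi$, so that $\pi_{\theta(\jo)}\nu_{\jo|_{i_k}}$ is, up to rotation, $G(\mathcal{T}_{kN}(\jo,\theta(\jo),1,0))$. Since $\dim G(\cdot)=\min\{1,\dim\nu\}$ $\lambda_\Psi$-a.e.\ (projection theorem only, no dimension conservation), Egorov yields a set $S\subseteq Z_\Psi$ of measure $>1-\varepsilon/2$ and an $N_0$ with $\tfrac{1}{N}H_N(G(z))>\min\{1,\dim\nu\}-\varepsilon/2$ for all $z\in S$ and $N\geq N_0$. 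Birkhoff applied to the ergodic flow then gives the density-$(1-\varepsilon/2)$ set $\mathcal{N}_\varepsilon^1$ of $k$'s with $\mathcal{T}_{kN}(\jo,\theta,v,t)\in S$. One then passes from a generic base point to $(\jo,\theta(\jo),1,0)$ by replacing $H_N$ with the weak-$^*$ continuous substitute $\eta\mapsto\int_0^1 H_N(\delta_x*\eta)\,dx$ and using that the $\theta$-orbits under $\mathcal{T}_N$ are asymptotic (Lemma~\ref{cor-almostcontraction}). The same is done for $\mu$, and intersecting the two density sets finishes the Ces\`aro estimate. So rather than a uniform Frostman regularity for the whole family $\{\pi_\theta\mu\}_\theta$, all that is needed is Egorov plus equidistribution of the orbit of projection directions for the relevant invariant measure.
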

The proof is given in Section \ref{section6}. Theorem \ref{theorem-main-2} is immediate from Claim \ref{claim-easycase}. 
\begin{proof}[Proof of Theorem \ref{theorem-main-2}] 
For any $\varepsilon>0$, applying Claim \ref{claim-easycase} and Theorem \ref{thm-localentropyaverages} yields that 
$$
\dim (\mu*\nu) \geq \min \lbrace 1,\dim \mu \rbrace + \min \lbrace 1,\dim\nu\rbrace-2\varepsilon
$$
which proves Theorem \ref{theorem-main-2}.
\end{proof}
From Theorem \ref{theorem-main-2} it readily follows that if $\mu$ and $\nu$ satisfy the assumptions of Theorem \ref{theorem-main} and
\begin{equation}\label{eq-counterassumption}
\dim (\mu*\nu) < \min \lbrace 2, \dim\mu + \dim\nu\rbrace,
\end{equation}
then $\dim \mu > 1>\dim\nu$. From now on, we suppose that $\dim\mu>1>\dim\nu$, and that $\Phi$ and $\Psi$ satisfy the domination condition. 

The strategy of the rest of the proof is to assume that the arithmetic conclusion of Theorem \ref{theorem-main} does not hold, and use this assumption to obtain a lower bound for $\frac{1}{\beta}H_\beta(\mu^{D_{k\beta}(\Pi(\io))} * \nu_{\jo|_{i_k}})$ which, when combined with Theorem \ref{thm-localentropyaverages}, results in a contradiction with \eqref{eq-counterassumption}. Applying Lemmas \ref{lemma-entropyofconvolution} and \ref{lemma-chainruleapplication} we see that 
\begin{align}\label{eq-entropycomputations}
&H_\beta(\mu^{D_{k\beta}(\Pi(\io))}*\nu_{\jo|_{i_k}}) \\
\geq\ &H_\beta( \pi^2 \mu^{D_{k\beta}(\Pi(\io))} * \pi^2 \nu_{\jo|_{i_k}}) + H_\beta( \mu^{D_{k\beta}(\Pi(\io))} | \pi^2) -O(1)\nonumber\\
=\ &H_\beta( \pi^2\mu^{D_{k\beta}(\Pi(\io))} * \pi^2 \nu_{\jo|_{i_k}}) +  H_\beta(\mu^{D_{k\beta}(\Pi(\io))}) - H_\beta(\pi^2 \mu^{D_{k\beta}(\Pi(\io))}) - O(1) \nonumber
\end{align}
for every $\beta$. Dividing both sides by $\beta$, the average of the terms $\frac{1}{\beta}H_\beta(\mu^{D_{k\beta}(\Pi(\io))})$ over $k$ is known to equal $\dim \mu$. Thus, Theorem \ref{theorem-main} follows from the following estimates together with the local entropy averages:

\begin{claim}\label{claim-1}
Let $\Phi$ be as in Theorem \ref{theorem-main}. For any $\varepsilon>0$, the following holds after conjugating $\Phi$ through the linear map given by Proposition \ref{prop-parempijono}: 

Let $\mu$ be a fully supported self-affine measure associated to $\Phi$ with $\dim\mu > 1$. For any $N_0\in\N$, there exists a real number $\beta\geq N_0$ such that for $\bmu$-a.e. $\io\in\Gamma^\N$, we have
$$
\limsup_{n\to\infty} \frac{1}{n}\sum_{k=0}^{n-1} \frac{1}{\beta} H_\beta(\pi^2 \mu^{D_{k\beta}(\Pi(\io))}) \leq \dim \mu - 1 + \varepsilon.
$$
\end{claim}

\begin{claim}\label{claim-2}
Let $\Phi$ and $\Psi$ be as in Theorem \ref{theorem-main}, and suppose that there exists $(i,j)\in\Gamma\times\Lambda$ such that $\frac{\log|\lambda_1(A_i)|}{\log |\lambda_2(B_j)|}\not\in\Q$. For any $\varepsilon>0$, the following holds after conjugating $\Phi$ and $\Psi$ through the linear map given by Proposition \ref{prop-parempijono}: 

Let $\mu$ and $\nu$ be fully supported self-affine measures associated to $\Phi$ and $\Psi$ with $\dim\mu > 1 > \dim\nu$. For any $N_0\in\N$ there exists a real number $\beta\geq N_0$ such that for $\bmu$-a.e. $\io\in\Gamma^\N$ and $\bnu$-a.e. $\jo\in\Lambda^\N$, we have
$$
\liminf_{n\to\infty}\frac{1}{n}\sum_{k=0}^{n-1} \frac{1}{\beta} H_\beta(\pi^2\mu^{D_{k\beta}(\Pi(\io))} * \pi^2 \nu_{\jo|_{i_k}}) \geq \min \lbrace 1, \dim \mu - 1 + \dim \nu \rbrace - \varepsilon.
$$
\end{claim} 
While the domination condition could be relaxed from Claim \ref{claim-1} by slightly modifying the statement, in the proof of Claim \ref{claim-2} it plays a crucial role. These claims are proved in Section \ref{section6}. We now show how to conclude the proof of Theorem \ref{theorem-main}.

\begin{proof}[Proof of Theorem \ref{theorem-main}]
Let $\Phi =\lbrace \varphi_i\rbrace_{i\in\Gamma}$ and $\Psi=\lbrace \psi_j\rbrace_{j\in\Lambda}$ be as in the hypothesis, and suppose that there exist fully supported self-affine measures $\mu$ and $\nu$ such that $\dim\mu\geq\dim\nu$ and $\dim(\mu*\nu) < \min\lbrace 2,\dim\mu + \dim\nu\rbrace$. By Theorem \ref{theorem-main-2} we have $\dim\mu>1>\dim\nu$. For a contradiction, suppose that 
$$
\lbrace |\lambda_1 (A_i)|:\ i\in\Gamma\rbrace \cup \lbrace |\lambda_2(B_j)|:\ j\in\Lambda\rbrace
$$
is not an arithmetic set. It is not difficult to see that there must then exist a pair $(i,j)\in\Gamma\times\Lambda$ such that $\frac{\log |\lambda_1(A_i)|}{\log|\lambda_2(B_j)|} \not\in\Q$. 

Let $\varepsilon >0$ be small, and conjugate $\Phi$ and $\Psi$ through a linear map as in Claims \ref{claim-1} and \ref{claim-2}. Since $\Phi$ and $\Psi$ are conjugated through the same map, it is easy to see that a pair $(i,j)\in\Gamma\times\Lambda$ and measures $\mu$ and $\nu$ as above also exist for the conjugate IFSs. By \cite[Lemma 4.3]{HochmanShmerkin2012}, we have
$$
\dim \mu = \liminf_{n\to\infty} \frac{1}{n}\sum_{k=0}^{n-1} \frac{1}{\beta} H_\beta(\mu^{D_{k\beta}(\Pi(\io))})
$$
for $\bmu$-almost every $\io\in\Gamma^\N$ and any $\beta>0$, so by \eqref{eq-entropycomputations} and Claims \ref{claim-1} and \ref{claim-2}, for any $N_0\in\N$ there exists $\beta\geq N_0$ such that
\begin{align*}
    &\liminf_{n\to\infty} \frac{1}{n} \sum_{k=0}^{n-1} \frac{1}{\beta} H_\beta(\mu^{D_{k\beta}(\Pi(\io))} * \nu_{\jo|_{i_k}}) \\
    \geq\ &\min \lbrace 1, \dim\mu-1 + \dim \nu \rbrace + \dim \mu - (\dim \mu-1) - 2\varepsilon - O(1/\beta) \\
    =\ & \min \lbrace 2, \dim \mu + \dim \nu\rbrace -3\varepsilon
\end{align*}
for $\bmu$-almost every $\io \in \Gamma^\N$ and $\bnu$-almost every $\jo\in\Lambda^\N$. It now follows from Theorem \ref{thm-localentropyaverages} that $\dim(\mu*\nu) \geq \min\lbrace 2,\dim\mu+\dim\nu\rbrace -4 \varepsilon$, which is a contradiction if $\varepsilon$ is small enough.
\end{proof}

\section{The scenery of the self-affine measure}\label{section5}

In this section, our aim is to prove Proposition \ref{prop-parempijono}. We assume throughout the section that $\mu$ is a self-affine measure associated to an iterated function system $\Phi = \lbrace \varphi_i(x) = A_i x + a_i\rbrace_{i\in\Gamma}$ satisfying the irreducibility, domination and strong separation conditions, and that $\dim\mu>1$. The arguments of this section were inspired by the work of Kempton \cite{Kempton2015} on a similar result for a more special class of self-affine measures, and some of our lemmas are analogous to those in \cite{Kempton2015}.

\subsection{Restrictions of $\mu$ on thin rectangles}

Let us begin with the heuristics of why magnifications of $\mu$ are related to slices of $\mu$. Let $\io\in\Gamma^\N$, let $B$ be a small ball centered at $\Pi(\io)$ and let $n$ be the largest integer so that $\varphi_{\io|_n}(B(0,1)) \supseteq B$. By the strong separation condition, we have $\mu_B = \varphi_{\io|_n} \varphi_{\io|_n}^{-1} \mu_B = \varphi_{\io|_n} \mu_{\varphi_{\io|_n}^{-1}B}$. Therefore, in order for us to understand the magnifications $\mu_B$, it suffices to understand the measures $\mu_{\varphi_{\io|_n}^{-1}B}$, the restrictions of $\mu$ on the ellipses $\varphi_{\io|_n}^{-1}B$ whose major semi-axes have length comparable to $1$ and are parallel to $\theta(A_{\io|_n}^{-1})$. Since Lemma \ref{lemma-suuntasuppeneeprelminary} asserts that the ellipses $\varphi_{\io|_n}^{-1}B$ get thinner and thinner as $n$ increases, the measure $\mu_{\varphi_{\io|_n}^{-1}B}$ should be close to a slice of $\mu$ in the direction $\theta(A_{\io|_n}^{-1})$, when $n$ is large. 

In a more rigorous approach, it is better to work with rectangles instead of ellipses. For $\io \in \Gamma^\N$, $\theta \in \RP$, $r_2\geq r_1 \geq 0$, write $Y_{\io, \theta, r_1, r_2}$ for the rectangle centered at $\Pi(\io)$ with sidelengths $2^{-r_1} \geq 2^{-r_2}$ and the longer side oriented in the direction $\theta$. For a rectangle $Y$ with center at $x$ and major side oriented in the direction $\theta$, write $H_Y$ for the map which translates $x$ to the origin and stretches $T_x Y$ onto $R_{\theta}^{-1}[-1,1]^2$. It follows from Lemma \ref{prop-generalslices} that the measures $H_{Y_{\io,\theta,r_1, r_2}} \mu_{Y_{\io,\theta,r_1,r_2}}$ have a fiber structure in the following sense.

\begin{lemma}\label{lemma-fiberstructure1}
For $\bmu\times\mu_F$-a.e. $(\io,\theta)\in\Gamma^\N\times\RP$ and any $\varepsilon,\delta,r>0$, there exists $t>0$ such that if $0\leq r_1\leq r$, $r_2\geq t$, $Q$ is a translate of $[0,1/2]^2$ that contains the origin and 
$$
R_\theta H_{Y_{\io,\theta,r_1,r_2}} \mu_{Y_{\io,\theta,r_1,r_2}}(Q) \geq \delta,
$$
then 
$$
d_{\rm LP}\left( \pi^2( (R_\theta H_{Y_{\io,\theta,r_1,r_2}} \mu_{Y_{\io,\theta,r_1,r_2}})_{Q}),\ (\mu_{\io,\theta,r_1})_{\pi^2 Q} \right) < \varepsilon.
$$
\end{lemma}

For the proof, we record the following elementary observations.

\begin{lemma}[Lemma 3.3 of \cite{BaranyKaenmakiPyoralaWu2023}]\label{claim-restriction}
For any $r,\delta>0$, the following holds for all small enough $\varepsilon\geq\varepsilon'>0$:

If $\mu$ and $\nu$ are probability measures on $[-1,1]^d$ with $d_{\rm LP}(\mu,\nu)<\varepsilon'$ and $B = B(x,r)$ is a closed ball with $\min\lbrace \mu(B),\nu(B)\rbrace\geq \delta$ and $\nu(B(x,r+\varepsilon'))\leq \nu(B(x,r-\varepsilon'))+\varepsilon$, then 
$$
d_{\rm LP}(\mu_B, \nu_B) < O(\varepsilon/\delta).
$$
\end{lemma}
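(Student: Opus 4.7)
The plan is to work directly from the definition of the Lévy--Prokhorov distance and the fact that the restriction $\mu_B(A) = \mu(A\cap B)/\mu(B)$ is a ratio. We need to compare the numerators $\mu(A\cap B)$ and $\nu(A\cap B)$ up to an $\varepsilon'$-enlargement of $A$, and separately show that the normalizing masses $\mu(B)$ and $\nu(B)$ differ by at most $O(\varepsilon)$; then division by a quantity that is at least $\delta$ gives the asserted $O(\varepsilon/\delta)$ bound.

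For the first direction, the definition of $\dlp$ gives $\mu(A\cap B) \leq \nu((A\cap B)^{\varepsilon'}) + \varepsilon'$, and an easy containment yields $(A\cap B)^{\varepsilon'} \subseteq A^{\varepsilon'} \cap B(x, r+\varepsilon')$. Splitting the latter into $A^{\varepsilon'}\cap B$ and the annulus $A^{\varepsilon'}\cap (B(x,r+\varepsilon')\setminus B)$, the annular piece has $\nu$-mass at most $\nu(B(x,r+\varepsilon'))-\nu(B)\leq\varepsilon$ by hypothesis. So $\mu(A\cap B)\leq \nu(A^{\varepsilon'}\cap B)+\varepsilon+\varepsilon'$. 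For the reverse direction, one cannot directly control $\mu$ on the outer annulus, so instead one estimates via the \emph{inner} ball: $\nu(A\cap B(x,r-\varepsilon'))\leq \mu((A\cap B(x,r-\varepsilon'))^{\varepsilon'})+\varepsilon' \leq \mu(A^{\varepsilon'}\cap B)+\varepsilon'$, and then uses the boundary hypothesis on $\nu$ again to pass from $\nu(A\cap B(x,r-\varepsilon'))$ to $\nu(A\cap B)$ at the cost of $\varepsilon$. This gives $\nu(A\cap B)\leq \mu(A^{\varepsilon'}\cap B)+\varepsilon+\varepsilon'$.

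Applying either of these inequalities with $A=B$ (and $A=B(x,r-\varepsilon')$ in the second) immediately yields $|\mu(B)-\nu(B)|\leq \varepsilon+\varepsilon'=O(\varepsilon)$, so $\nu(B)/\mu(B)=1+O(\varepsilon/\delta)$ and symmetrically for $\mu(B)/\nu(B)$. Dividing the numerator inequalities through, one obtains
\[
\mu_B(A)\leq \nu_B(A^{\varepsilon'})\cdot\frac{\nu(B)}{\mu(B)}+\frac{\varepsilon+\varepsilon'}{\mu(B)}\leq \nu_B(A^{\varepsilon'})+O(\varepsilon/\delta),
\]
and analogously $\nu_B(A)\leq \mu_B(A^{\varepsilon'})+O(\varepsilon/\delta)$. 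For $\varepsilon$ chosen small enough, $\varepsilon'\leq\varepsilon\leq O(\varepsilon/\delta)$, so both estimates fit into a single $\dlp$ bound of the required order.

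I expect no real obstacle here beyond the mild asymmetry that only $\nu$ is assumed to have small mass on the boundary annulus $B(x,r+\varepsilon')\setminus B(x,r-\varepsilon')$. The fix, as above, is to avoid ever enlarging $B$ in $\mu$: in the direction where one needs to compare to $\mu$, one always starts from the shrunk ball $B(x,r-\varepsilon')$ before applying the definition of $\dlp$, which transfers the role of the boundary estimate onto $\nu$ where it is available.
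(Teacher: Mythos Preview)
Your proof is correct. The paper does not actually supply a proof of this lemma; it merely records it as an ``elementary observation'' and moves on. Your argument---comparing $\mu(A\cap B)$ with $\nu(A^{\varepsilon'}\cap B)$ via the outer annulus, handling the reverse direction by first passing through the shrunk ball $B(x,r-\varepsilon')$ so that the boundary-mass hypothesis is only ever invoked for $\nu$, and then dividing by the normalizing masses---is exactly the computation the paper is implicitly asking the reader to carry out, and there are no gaps.
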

\begin{lemma}\label{lemma-countable}
Let $\mu$ be a probability measure on $\R$. If $I\subseteq \R$ is a closed interval which contains $0$, then $\mu(x+I) > 0$ for $\mu$-almost every $x\in\R$. 
\end{lemma}
\begin{proof}
    The distance between any two points in $E := \lbrace x\in\R:\ \mu(x+I) = 0\rbrace \cap {\rm spt}\,\mu$ is at least $|I|/2>0$. In particular, $E$ is countable, and so $\mu(E) \leq \bigcup_{x\in E} \mu(x+I) = 0$.  
\end{proof}

\begin{proof}[Proof of Lemma \ref{lemma-fiberstructure1}]
Recall from \eqref{eq-muitheta} that $\mu_{\io,\theta} = R_\theta T_{\Pi(\io)}\mu_{\pi_\theta \Pi(\io)}^\theta$, where $\mu_{\pi_\theta \Pi(\io)}^\theta$ is a conditional measure of $\mu$ from the disintegration $\mu = \int \mu_y^\theta\,d\pi_\theta\mu(y) = \int \mu_{\pi_\theta \Pi(\io)}^\theta \,d\bmu(\io)$, and that $\mu_{\io,\theta,r} = S_r^*\mu_{\io,\theta}$. We claim that 
\begin{equation}\label{eq-infbound}
\inf \lbrace \mu_{\io,\theta,r_1}(\lbrace 0\rbrace \times [a,a+1/2]):\ -1/2\leq a \leq 0,\ 0\leq r_1\leq r \rbrace > 0
\end{equation}
for every $\theta\in\RP$ and $\bmu$-almost every $\io\in\Gamma^\N$. Indeed, for every $\theta\in\RP$ and $\pi_\theta\mu$-almost every $y\in\theta$, the measure $\mu_y^\theta$ exists and is supported on the line $y+\theta^\perp$. It follows from Lemma \ref{lemma-countable}, applied with $y+\theta^\perp$ in place of $\R$ and for the interval $R_\theta^{-1}(\lbrace0\rbrace\times [0,2^{-r}/4])\subseteq \theta^\perp$, that 
$$
\mu_{y}^\theta(x+R_\theta^{-1}(\lbrace0\rbrace\times [0,2^{-r}/4])) > 0
$$
for $\mu_{y}^\theta$-almost every $x\in y+\theta^\perp$. Since $\pi_\theta(x) = y$ for every $x\in y+\theta^\perp$, it follows that $\mu_{\Pi^{-1}(x),\theta}(\lbrace0\rbrace\times [0,2^{-r}/4]) = \mu_y^\theta(x+ R_\theta^{-1}(\lbrace0\rbrace\times [0,2^{-r}/4])) > 0$ for $\pi_\theta\mu$-almost every $y$ and $\mu_y^\theta$-almost every $x$. But since $\mu = \int \mu_{y}^\theta \,d\pi_\theta \mu(y)$, this means that 
$$
\mu_{\io,\theta, r_1}(\lbrace0\rbrace\times [0,1/4]) \geq \mu_{\io,\theta}(\lbrace0\rbrace\times [0,2^{-r}/4])> 0
$$
for $\bmu$-almost every $\io\in\Gamma^\N$ and every $0\leq r_1\leq r$. Analogously, $\mu_{\io,\theta,r_1}(\lbrace0\rbrace\times [-1/4, 0]) > 0$ for $\bmu$-almost every $\io$ and every $0\leq r_1\leq r$. Since the intervals of the form $\lbrace 0\rbrace \times [a,a+1/2]$ considered in \eqref{eq-infbound} always contain either $\lbrace 0\rbrace \times [-1/4,0]$ or $\lbrace 0\rbrace \times [0,1/4]$, \eqref{eq-infbound} follows. 

Thus, by \eqref{eq-infbound}, for a given $\varepsilon > 0$, there exists $c > 0$ and a set $A_\varepsilon$ with $\bmu\times \mu_F(A_\varepsilon) > 1-\varepsilon$ such that $\mu_{\io,\theta,r_1}(\lbrace0\rbrace\times[a,a+1/2]) \geq c$ for every $(\io,\theta) \in A_\varepsilon$, $-1/2\leq a\leq 0$ and $0\leq r_1\leq r$. Since $\bmu\times\mu_F(\bigcup_{n\in\N} A_{1/n}) = 1$, it suffices to prove the statement for $(\io,\theta)\in A_\varepsilon$, for a given $\varepsilon>0$.

Recall that $\pi^1$ denotes the orthogonal projection to the $x$-axis. Lemma \ref{prop-generalslices} translated to the language of this section states that if $t\geq 0$ is large enough, then for every $r_2\geq t$, 
$$
d_{\rm LP}\left( \pi^2 (R_\theta H_{Y_{\io,\theta,r_1,r_2}} \mu_{Y_{\io,\theta,r_1,r_2}})_{(\pi^1)^{-1} \pi^1(Q)},\ \mu_{\io,\theta,r_1}\right) < \varepsilon.
$$
See Figure \ref{kuva1}. Since $\dim\mu>1$, $\mu_{\io,\theta,r_1}$ is non-atomic for $\bmu\times\mu_F$-almost every $(\io,\theta)$ by Theorem \ref{thm-dimensionconservation}. Moreover, $(\pi^1)^{-1} \pi^1(Q)\cap (\pi^2)^{-1} \pi^2 (Q) = Q$ and $\mu_{\io,\theta,r_1}(\pi^2 Q) \geq c$, so Lemma \ref{claim-restriction} combined with the above asserts that for a possibly even larger $t\geq 0$, for every $r_2\geq t$ and every $0\leq r_1\leq r$,
$$
d_{\rm LP}\left( \pi^2 (R_\theta H_{Y_{\io,\theta,r_1,r_2}} \mu_{Y_{\io,\theta,r_1,r_2}})_{Q},\ (\mu_{\io,\theta,r_1})_{\pi^2 Q}\right) <\varepsilon
$$
which is what we wanted to prove.
\end{proof}

\begin{figure}[H]
\centering
\begin{tikzpicture}

\draw (-5, -2) node[right]{$\theta$} -- (-5, 2);
\draw (-3.5, 0) node{$\longleftarrow_{\pi^2}$};
\draw (-5.1, -0.5) node[left]{$\pi^2 Q$} -- (-4.9, -0.5);
\draw (-5.1, 1.5) -- (-4.9, 1.5);

\draw[fill=black] (0,0) node[above,right]{$(0,0)$} circle(0.05cm);
\draw (-0.5,-0.5) node[below, left]{$Q$} rectangle (1.5,1.5);
\draw (-2,-2) rectangle (2,2);
\draw (2,-2) node[above,right]{$H_{Y_{\io,\theta,r_1, r_2}}Y_{\io,\theta,r_1, r_2}$};
\draw[thick, dotted] (-0.5, -2) rectangle (1.5, 2);
\draw (1.5, -1) node[right]{$(\pi^1)^{-1}\pi^1(Q)$};

\end{tikzpicture}
\caption{The measure $H_{Y_{\io,\theta,r_1, r_2}}\mu_{Y_{\io,\theta,r_1, r_2}}$ projects close to a slice of $\mu$ under $\pi^2$, even when restricted onto $(\pi^1)^{-1}\pi_1(Q)$.}\label{kuva1}
\end{figure}
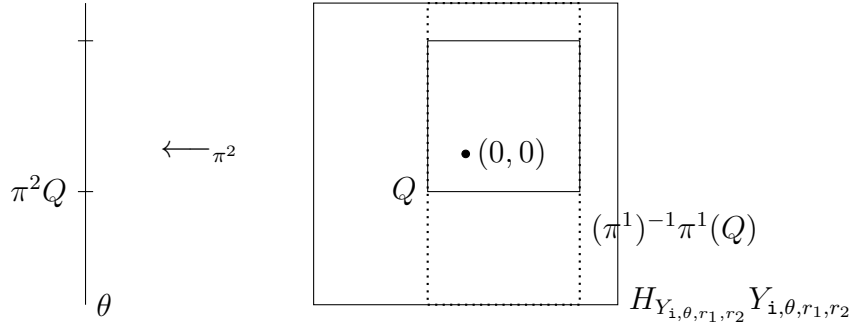

We will next relate the measures $\mu^{D_{k\beta}(\Pi(\io))}$ to the measures $H_{Y_{\io,\theta,r_1, r_2}} \mu_{Y_{\io,\theta,r_1,r_2}}$. To make statements more economic, we introduce some additional notation. 

\subsection{Magnifications of $\mu$}

For $\ifi \in \Gamma^*$, let $A_\ifi = U_\ifi D_\ifi V_\ifi^{-1}$ denote the singular value decomposition, where $U_\ifi, V_\ifi$ are orthogonal and $D_{\ifi} = {\rm diag}(\alpha_1(A_\ifi), \alpha_2(A_\ifi))$. Recall the definition of the sequence $(\ell_k)_{k\in\N}$ from \eqref{eq-lk}. Throughout the following, we will use the short-hand notation 
$$
Q_{\io, \theta, k} := Y_{\sigma^{\ell_k} \io,\, \theta,\, k\beta -2+ \log \alpha_1(\io|_{\ell_k}),\, k\beta-2+ \log\alpha_2(\io|_{\ell_k})}.
$$
Then the set $Q_{\io, \theta(A_{\io|_{\ell_k}}^{-1}), k}\subset \R^2$ is the smallest rectangle that contains the ellipse $\varphi_{\io|_{\ell_k}}^{-1}(B(\Pi(\io),2^{-k\beta+2}))$.
\begin{proposition}\label{prop-asymptotics}
For every $\io\in\Gamma^\N$, every large enough $\beta>0$ and every $\theta\in B(\theta(\io)^\perp, 1/5)$, there exists a sequence of non-singular linear maps $(L_{\io,\theta,k})_{k\in\N}$ such that
$$
S_{k\beta-1}^*T_{\Pi(\io)}\mu= S_1^*U_{\io|_{\ell_k}} V_{\io|_{\ell_k}}^{-1} L_{\io,\theta,k} H_{Q_{\io,A_{\io|_{\ell_k}}^{-1}\theta, k}}\mu_{Q_{\io, A_{\io|_{\ell_k}}^{-1}\theta, k}}
$$
for all large enough $k$.
\end{proposition}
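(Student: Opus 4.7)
The plan is a push-forward calculation that combines strong separation, the defining property \eqref{eq-lk} of $\ell_k$, and the singular value decomposition. Throughout, write $g := \varphi_{\io|_{\ell_k}}$, $A := A_{\io|_{\ell_k}} = UDV^{-1}$ in SVD form, $c := \Pi(\sigma^{\ell_k}\io)$, and $\psi := A^{-1}\theta$.

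\emph{Step 1: from separation to an ellipse of restriction.} By strong separation and the choice of $\ell_k$, for every $r\leq 2^{-kN}$ one has $\mu_{B(\Pi(\io),r)} = g_*\mu_{g^{-1}B(\Pi(\io),r)}$. The preimage $E_{k,m} := g^{-1}B(\Pi(\io),2^{-(kN+m)})$ is the ellipse centered at $c$ with axes of lengths $2^{-(kN+m)}\alpha_1^{-1}, 2^{-(kN+m)}\alpha_2^{-1}$ in the directions $V(e_1),V(e_2)$. A direct push-forward gives
$$
S_{kN+m}T_{\Pi(\io)}\mu \;=\; \bigl(2^{kN+m}A\bigr)\, T_c\, \mu_{E_{k,m}}.
$$
Factoring $2^{kN+m}A = UV^{-1}\cdot V(2^{kN+m}D)V^{-1}$ and using that $S_m$ commutes with orthogonal transformations, the proposition reduces to producing a nonsingular linear $L_{\io,\theta,k}$ satisfying
$$
V(2^{kN+m}D)V^{-1}\, T_c\, \mu_{E_{k,m}} \;=\; S_m\, L_{\io,\theta,k}\, H_Q\, \mu_Q, \qquad Q := Q_{\io,\psi,k}.
$$

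\emph{Step 2: the ellipse fits into the rectangle.} By Lemma \ref{cor-almostcontraction}, $\psi = A^{-1}\theta$ tends to $\theta(A^{-1}) = V(e_1)$, which is precisely the major-axis direction of $E_{k,m}$. A standard SVD estimate bounds the angle between $\psi$ and $V(e_1)$ by a constant multiple of $(\alpha_1/\alpha_2)\tan d(\theta,\theta(\io))$. Consequently the $\psi^\perp$-extent of $E_{k,m}$ is of order $2^{-(kN+m)}\alpha_2^{-1}$, and for $\theta\neq\theta(\io)$ and $m$ a sufficiently large constant depending only on $d(\theta,\theta(\io))$, this is smaller than half the shorter side $2^{-kN}\alpha_2^{-1}$ of $Q$ for all large $k$. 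Hence $E_{k,m}\subseteq Q$, and $\mu_{E_{k,m}}$ is, up to a multiplicative constant, just $\mu_Q$ restricted to $E_{k,m}$.

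\emph{Step 3: matching the affine maps.} Since $H_Q = M\circ T_c$ for the explicit linear stretching $M = R_\psi^{-1}\,\mathrm{diag}(2^{kN+1}\alpha_2,2^{kN+1}\alpha_1)\,R_\psi$, define
$$
L_{\io,\theta,k} \;:=\; V(2^{kN}D)V^{-1}\, M^{-1},
$$
which is nonsingular. Then $L_{\io,\theta,k} H_Q = V(2^{kN}D)V^{-1}\,T_c$, and a parametrization of $E_{k,m}$ as $c+VD^{-1}z$ with $z\in 2^{-(kN+m)}B(0,1)$ shows this map sends $E_{k,m}$ bijectively onto $B(0,2^{-m})$. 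Applying $S_m$ scales the ball up to $B(0,1)$ and replaces the affine map by $V(2^{kN+m}D)V^{-1}\,T_c$; combined with Step 2, this yields the asserted equality of measures.

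The only delicate point is Step 2: in the degenerate case $\theta = \theta(\io)$, the direction $\psi$ tends to the \emph{minor}-axis direction of $E_{k,m}$, so the long axis of $E_{k,m}$ would have to fit inside the short side of $Q$, which fails for any fixed $m$. This is harmless for the applications, since Proposition \ref{prop-parempijono} invokes Proposition \ref{prop-asymptotics} only for $\theta$ in a positive $\mu_F$-measure set, and irreducibility guarantees that $\mu_F$ is non-atomic, hence $\theta\neq\theta(\io)$ for $\mu_F$-a.e.\ $\theta$.
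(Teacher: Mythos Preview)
Your argument is correct and follows essentially the same route as the paper: both use strong separation to reduce to the push-forward of $\mu$ restricted to the ellipse $A_{\io|_{\ell_k}}^{-1}B(\Pi(\io),\cdot)$, then invoke the angle estimate $\tan d(A^{-1}\theta,\theta(A^{-1})) \approx (\alpha_1/\alpha_2)\tan d(\theta,\theta(\io)^\perp)$ (the content of Lemma~\ref{lemma-geometriclemma}, which you inline in Step~2) to fit this ellipse inside $Q_{\io,A^{-1}\theta,k}$ after a fixed scaling, and both define $L_{\io,\theta,k}$ so that $L_{\io,\theta,k}H_Q$ recovers the SVD-based rescaling. Your explicit acknowledgment that the argument requires $\theta\neq\theta(\io)$ matches the paper's implicit ``a.e.'' in Lemma~\ref{lemma-geometriclemma}, and is indeed harmless for the application.
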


An important point of the proposition is that the direction $\theta$ can be chosen arbitrarily from a large set. In proving this we require the following geometric lemma, analogous to \cite[Lemma 8.2]{Kempton2015}. Write $E_{\io, \theta, k} \subseteq Q_{\io,\theta, k}$ for the largest ellipse contained in $Q_{\io, \theta, k}$. Note that $E_{\io, \theta(A_{\io|_{\ell_k}}^{-1}), k} = \varphi_{\io|_{\ell_k}}^{-1}(B(\Pi(\io),2^{-k\beta+2}))$. 

\begin{lemma}[Figure \ref{ellipsi}]\label{lemma-geometriclemma}
For $\bmu$-almost every $\io\in\Gamma^\N$ and every $\theta\in B(\theta(\io)^\perp, 1/5)$, for every large enough $k$, 
$$
S_{-1} T_{\Pi(\sigma^{\ell_k}\io)}E_{\io, \theta(A_{\io|_{\ell_k}}^{-1}), k} \subseteq T_{\Pi(\sigma^{\ell_k}\io)}Q_{\io, A_{\io|_{\ell_k}}^{-1}\theta, k}.
$$
\end{lemma}

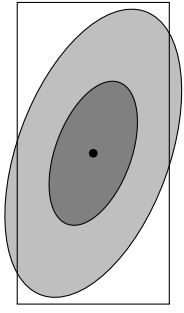
\begin{figure}[H]
\begin{tikzpicture}
\draw[rotate=-20, fill=lightgray] (0,0) ellipse (1cm and 2cm);
\draw (-1,-2) rectangle (1,2);
\draw[rotate=-20, fill=gray](0,0) ellipse (0.5cm and 1 cm);
\draw[fill=black] (0,0) circle (0.05cm);
\end{tikzpicture}
\caption{The ellipse $E_{\io, \theta(A_{\io|_{\ell_k}}^{-1}), k}$ (light gray) fits completely inside the rectangle $Q_{\io, A_{\io|_{\ell_k}}^{-1}\theta, k}$ after the axes are scaled by $1/2$.}\label{ellipsi}
\end{figure}

\begin{proof}
For any $\theta\in\RP$, $\io\in \Gamma^\N$ and $k\in \N$, we have 
\begin{equation}\label{eq-tangentti}
\tan d(A_{\io|_k}^{-1} \theta,\ \theta(A_{\io|_k}^{-1})) = \frac{\alpha_1(\io|_k)}{\alpha_2(\io|_k)} \tan d(\theta,\  \theta(A_{\io|_k})^\perp).
\end{equation}
See Figure \ref{kolmioita}.
\begin{figure}[H]

\begin{tikzpicture}
\draw(0,0) coordinate(origo) -- node[below]{$\ell$} (2,0) coordinate(1) -- node[right]{$1$} (2,1) coordinate(2) -- (0,0);
\draw(4,0.5) node{$A_{\io|_k}^{-1}\ \mapsto$};
\draw (6,0) coordinate(a) -- node[below]{$\alpha_2(\io|_k)^{-1}\ell$} (8.5,0) coordinate(b) -- node[right]{$\alpha_1(\io|_k)^{-1}$} (8.5, 2) coordinate(c) -- (6,0);
\pic [draw, "$\beta$", angle eccentricity=1.5] {angle = 1--origo--2};
\pic [draw, "$\beta'$", angle eccentricity=1.5] {angle = b--a--c};

\end{tikzpicture}
\caption{Here $\beta = d(\theta,\ \theta(A_{\io|_k})^\perp)$ which is taken to $\beta' = d(A_{\io|_k}^{-1} \theta,\ \theta(A_{\io|_k}^{-1}))$ by $A_{\io|_k}^{-1}$. If the side of length $1$ is parallel to $\theta(A_{\io|_k})^\perp$ (which is the major expanding direction of $A_{\io|_k}^{-1}$), then its image through $A_{\io|_k}^{-1}$ is of length $\alpha_1(\io|_k)^{-1}$.}\label{kolmioita}
\end{figure}
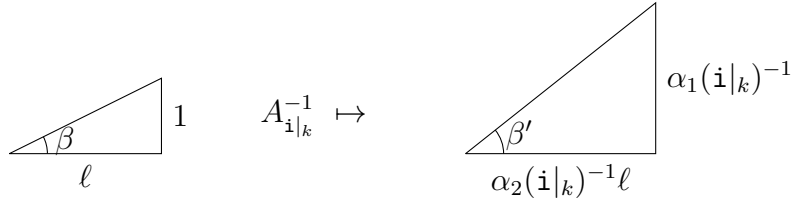

Let $\eta = d(A_{\io|_k}^{-1} \theta,\ \theta(A_{\io|_k}^{-1}))$ and define $X = [-\alpha_1 (\io|_k),\alpha_1 (\io|_k)] \times [-\alpha_2(\io|_k),\alpha_2(\io|_k)]$ and $Y = R_\eta^{-1}([-\tfrac{\alpha_1 (\io|_k)}{2}, \tfrac{\alpha_1 (\io|_k)}{2}] \times [-\tfrac{\alpha_2 (\io|_k)}{2}, \tfrac{\alpha_2 (\io|_k)}{2}])$. It is not difficult to see that $Y\subseteq X$ if we have $\sin \eta \cdot \tfrac{\alpha_2 (\io|_k)}{2} +\tfrac{\alpha_1 (\io|_k)}{2} \leq \alpha_1 (\io|_k)$, or equivalently,
$$
\sin \eta \leq \frac{\alpha_1 (\io|_k)}{\alpha_2(\io|_k)}.
$$
Since $\eta \to 0$ as $k\to\infty$ by Lemma \ref{cor-almostcontraction}, we have $\sin \eta \leq 2\tan \eta$ for large enough $k$. Therefore, for $\bmu$-almost every $\io\in\Gamma^\N$ and every $\theta\in B(\theta(\io)^\perp, 1/5)$, it follows from \eqref{eq-tangentti} and Lemma \ref{lemma-suuntasuppeneekaikkialla} that
$$
\sin \eta \leq \frac{2\alpha_1 (\io|_k)}{\alpha_2(\io|_k)} \tan d(\theta,\ \theta(A_{\io|_k})^\perp) \leq \frac{\alpha_1 (\io|_k)}{\alpha_2(\io|_k)} 2\tan(1/4) < \frac{\alpha_1 (\io|_k)}{\alpha_2(\io|_k)},
$$
for all large enough $k$.
\end{proof}

\begin{proof}[Proof of Proposition \ref{prop-asymptotics}]
Let $\beta>0$ be large enough so that 
$$
2^{-\beta}< \min\lbrace d(\varphi_i(B(0,1)),\ \varphi_j(B(0,1)):\ i,j\in\Gamma\rbrace,
$$
let $(\io, \theta)\in\Gamma^\N\times\RP$, let $k$ be large, and let $\ell_k$ be defined as in \eqref{eq-lk}. Write $B_{\io|_{\ell_k}} := U_{\io|_{\ell_k}} D^{-1}_{\io|_{\ell_k}} U_{\io|_{\ell_k}}^{-1}$. Recalling that $E_{\io, \theta(A_{\io|_{\ell_k}}^{-1}), k} = (\varphi_{\io|_{\ell_k}})^{-1} B(\Pi(\io), 2^{-k\beta+2})$, it is not difficult to see that $S_{k\beta-2} B_{\io|_{\ell_k}}^{-1}$ is the linear map which scales the origocentric ellipse $U_{\io|_{\ell_k}}V_{\io|_{\ell_k}}^{-1} T_{\varphi_{\io|_{\ell_k}}^{-1}\Pi(\io)} E_{\io, \theta(A_{\io|_{\ell_k}}^{-1}), k}$ onto $B(0,1)$ without rotating it. 

By the strong separation condition, with this notation we may write
\begin{align}\label{eq-ekalasku}
S_{k\beta-2}^*T_{\Pi(\io)} \mu &= S_{k\beta-2}^* T_{\Pi(\io)} \varphi_{\io|_{\ell_k}} \mu_{E_{\io, \theta(A_{\io|_{\ell_k}}^{-1}), k}}\nonumber\\
&= S_{k\beta-2}^* B_{\io|_{\ell_k}}^{-1} B_{\io|_{\ell_k}} A_{\io|_{\ell_k}} T_{\varphi_{\io|_{\ell_k}}^{-1} \Pi(\io)}\mu_{E_{\io, \theta(A_{\io|_{\ell_k}}^{-1}), k}} \nonumber\\
&= S_{k\beta-2}^* B_{\io|_{\ell_k}}^{-1}U_{\io|_{\ell_k}}V_{\io|_{\ell_k}}^{-1} T_{\varphi_{\io|_{\ell_k}}^{-1}\Pi(\io)} \mu_{E_{\io, \theta(A_{\io|_{\ell_k}}^{-1}), k}} \nonumber\\
&= U_{\io|_{\ell_k}} V_{\io|_{\ell_k}}^{-1} H_{Q_{\io, \theta(A_{\io|_{\ell_k}}^{-1}), k}} \mu_{E_{\io, \theta(A_{\io|_{\ell_k}}^{-1}), k}}.
\end{align}
by switching the order of scaling and rotation in the last equality. This is almost the representation we are seeking. The reason we are not yet content with this is that we would ultimately like to apply Lemma \ref{lemma-fiberstructure1} to say that the measure $R_{\theta(A_{\io|_{\ell_k}}^{-1})} H_{Q_{\io, \theta(A_{\io|_{\ell_k}}^{-1}), k}} \mu_{E_{\io, \theta(A_{\io|_{\ell_k}}^{-1}), k}}$ is close to $\mu_{\sigma^{\ell_k}\io, \theta(A_{\io|_{\ell_k}}^{-1}), t}$ for some $t\geq0$. However, we are able to say this only if $\theta(A_{\io|_{\ell_k}}^{-1})$ is replaced by some $\theta$ which is drawn randomly with respect to $\mu_F$. Although the sequence $(\theta(A_{\io|_{\ell_k}}^{-1}))_{k\in\N}$ does in fact equidistribute for $\mu_F$, for $\bmu$-almost every $\io\in\Gamma^\N$, it is only in the weak-$^*$ sense and the lack of any continuity for the function $(\io,\theta)\mapsto \mu_{\io,\theta}$ makes it difficult to say anything about the relationship of $R_{\theta(A_{\io|_{\ell_k}}^{-1})}H_{Q_{\io, \theta(A_{\io|_{\ell_k}}^{-1}), k}} \mu_{E_{\io, \theta(A_{\io|_{\ell_k}}^{-1}), k}}$ and $\mu_{\sigma^{\ell_k}\io, \theta(A_{\io|_{\ell_k}}^{-1})}$. Therefore, we devote the rest of the proof to the technical work of replacing the measure $H_{Q_{\io, \theta(A_{\io|_{\ell_k}}^{-1}), k}} \mu_{E_{\io, \theta(A_{\io|_{\ell_k}}^{-1}), k}}$ by $H_{Q_{\io, A_{\io|_{\ell_k}}^{-1}\theta, k}} \mu_{Q_{\io, A_{\io|_{\ell_k}}^{-1}\theta, k}}$ in \eqref{eq-ekalasku}, where $\theta$ can be drawn freely from a set of positive $\mu_F$-measure, so that we may eventually apply Lemma \ref{lemma-fiberstructure1}.

If we now let $\theta \in B(\theta(\io)^\perp, 1/5)$, then by Lemma \ref{lemma-geometriclemma} we have
\begin{equation}\label{eq-subsetC}
S_{-1} T_{\Pi(\sigma^{\ell_k}\io)}E_{\io, \theta(A_{\io|_{\ell_k}}^{-1}), k} \subseteq T_{\Pi(\sigma^{\ell_k}\io)}Q_{\io, A_{\io|_{\ell_k}}^{-1}\theta, k}
\end{equation}
for every large enough $k$. Using the general fact that $\mu_X = (\mu_Y)_X$ whenever $X\subseteq Y\subseteq \R^2$, we obtain from \eqref{eq-subsetC} that
$$
\left(T_{\Pi(\sigma^{\ell_k}\io)}\mu_{E_{\io, \theta(A_{\io|_{\ell_k}}^{-1}), k}}\right)_{S_{-1} T_{\Pi(\sigma^{\ell_k}\io)}E_{\io, \theta(A_{\io|_{\ell_k}}^{-1}), k}} = \left(T_{\Pi(\sigma^{\ell_k}\io)}\mu_{Q_{\io, A_{\io|_{\ell_k}}^{-1}\theta, k}}\right)_{S_{-1} T_{\Pi(\sigma^{\ell_k}\io)}E_{\io, \theta(A_{\io|_{\ell_k}}^{-1}), k}}.
$$
Pushing both measures forward under $H_{Q_{\io, \theta(A_{\io|_{\ell_k}}^{-1}), k}}\circ T_{\Pi(\sigma^{\ell_k}\io)}^{-1}$ and using the identity $B(0,1) = H_{Q_{\io, \theta(A_{\io|_{\ell_k}}^{-1}), k}}T_{\Pi(\sigma^{\ell_k}\io)}^{-1} T_{\Pi(\sigma^{\ell_k}\io)}E_{\io, \theta(A_{\io|_{\ell_k}}^{-1}), k}$ which is immediate from the definitions of $H$ and $E$, we obtain 
\begin{equation}\label{eq-submeasureC}
\left(H_{Q_{\io, \theta(A_{\io|_{\ell_k}}^{-1}), k}} \mu_{E_{\io, \theta(A_{\io|_{\ell_k}}^{-1}), k}}\right)_{B(0,1/2)} = \left(H_{Q_{\io, \theta(A_{\io|_{\ell_k}}^{-1}), k}} \mu_{Q_{\io, A_{\io|_{\ell_k}}^{-1}\theta, k}}\right)_{B(0,1/2)}.
\end{equation}
See Figure \ref{suunnanvaihto}.

\begin{figure}[H]

\begin{tikzpicture}
\draw[rotate=20] (-1,2) node[right]{} rectangle (1,-2) node[right]{$Q_{\io, A_{\io|_{\ell_k}}^{-1}\theta, k}$};
\draw[fill=lightgray](0,0) ellipse (0.5cm and 1 cm);
\draw[thick, dotted] (0,0) ellipse (1cm and 2cm) node at (2, 0.5) {$E_{\io, \theta(A_{\io|_{\ell_k}}^{-1}), k}$};
\end{tikzpicture}
\begin{tikzpicture}
\draw[white] (0,2) node[black]{$H_{\ldots}\,\rightarrow$}--(0.5,0);
\end{tikzpicture}
\resizebox {7cm} {\height}{
\begin{tikzpicture}
\draw[rotate=20] (-1,2) node[right]{} rectangle (1,-2);
\draw[fill=lightgray](0,0) ellipse (0.5cm and 1 cm);
\draw[thick, dotted] (0,0) ellipse (1cm and 2cm) ;
\end{tikzpicture}
}
\caption{The precise ``location'' of the line $A_{\io|_{\ell_k}}^{-1}\theta$ in $\RP$ is easier to control than that of $\theta(A_{\io|_{\ell_k}}^{-1})$ even though the lines are very close to each other. Because of this, we want to consider restrictions of $\mu$ on $Q_{\io,A_{\io|_{\ell_k}}^{-1}\theta,k}$ instead of on $E_{\io, \theta(A_{\io|_{\ell_k}}^{-1}), k}$.}\label{suunnanvaihto}
\end{figure}
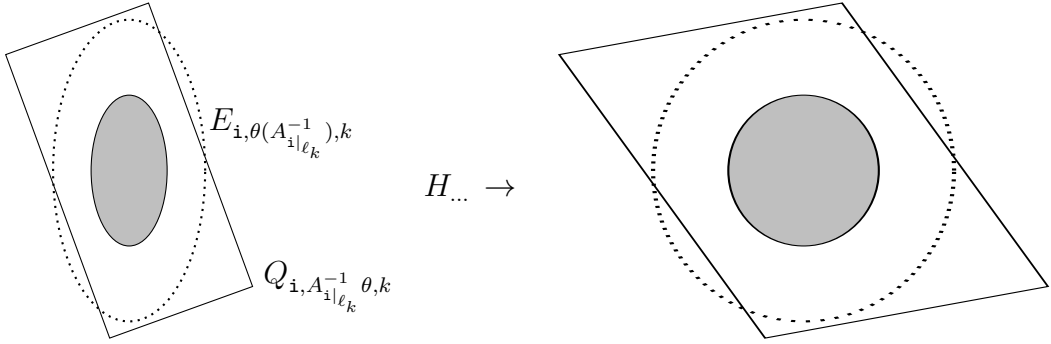

Writing
$$
L_{\io, \theta,k} := H_{Q_{\io, \theta(A_{\io|_{\ell_k}}^{-1}), k}} H_{Q_{\io, A_{\io|_{\ell_k}}^{-1}\theta, k}}^{-1}
$$
and combining \eqref{eq-submeasureC} with \eqref{eq-ekalasku}, we obtain
$$
S_{k\beta-1}^*T_{\Pi(\io)} \mu = S_{1}^*U_{\io|_{\ell_k}} V_{\io|_{\ell_k}}^{-1}L_{\io,\theta,k} H_{Q_{\io, A_{\io|_{\ell_k}}^{-1}\theta, k}} \mu_{Q_{\io, A^{-1}_{\io|_{\ell_k}}\theta, k}}
$$
for all large enough $k$. This completes the proof. 
\end{proof}

Note that because $D_{k\beta}(\Pi(\io))\subseteq B(\Pi(\io), 2^{-k\beta+1})$, it follows that
\begin{equation}\label{eq-corofProp51}
\mu^{D_{k\beta}(\Pi(\io))} = (S_{k\beta-1}^*T_{\Pi(\io)} \mu)_Q = (S_1^* U_{\io|_{\ell_k}} V_{\io|_{\ell_k}}^{-1}L_{\io,\theta,k} H_{Q_{\io, A_{\io|_{\ell_k}}^{-1}\theta, k}} \mu_{Q_{\io, A^{-1}_{\io|_{\ell_k}}\theta, k}})_{Q}
\end{equation}
where $Q$ is a closed square of side length $2^{k\beta -\lfloor k\beta\rfloor}$ that contains the origin. If the map $U_{\io|_{\ell_k}} V_{\io|_{\ell_k}}^{-1} L_{\io,\theta,k}$ were just the rotation $R_{A_{\io|_{\ell_k}}^{-1}\theta}$, then all that would be left to obtain the fiber structure of $\mu^{D_{k\beta}(\Pi(\io))}$ were to combine \eqref{eq-corofProp51} and Lemma \ref{lemma-fiberstructure1}; See Figure \ref{hankalatilanne}. 
\begin{figure}[H]
\begin{tikzpicture}
    \draw[rotate=10] (0,0) -- (3,0) -- (4,3) node[right]{$S_1 U_{\io|_{\ell_k}} V_{\io|_{\ell_k}}^{-1} L_{\io,\theta,k} H_{Q_{\io,A_{\io|_{\ell_k}}^{-1}\theta, k}}Q_{\io, A_{\io|_{\ell_k}}^{-1}\theta, k}$} -- (1,3) -- (0,0);
    \draw (1,1.25) rectangle (2.5,2.75) node[right]{$Q$};
    \draw[rotate=10, fill=black] (2, 1.5) node[above]{$(0,0)$} circle (0.05cm);
\end{tikzpicture}
\caption{For us to be able to directly apply Lemma \ref{lemma-fiberstructure1}, the above parallelogram would have to be a square similar to $Q$.}\label{hankalatilanne}
\end{figure}
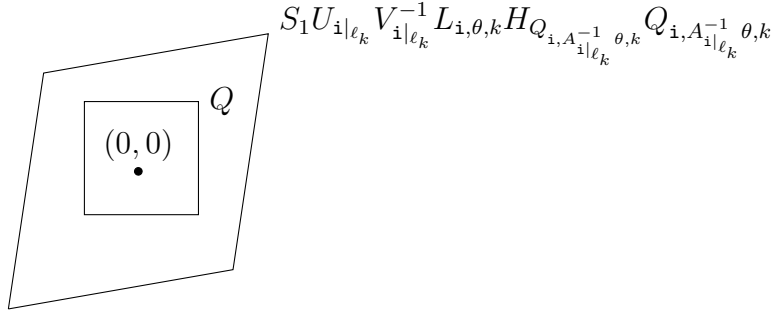 
Unfortunately, because of the involvement of the map $L_{\io,\theta,k}$, the map $U_{\io|_{\ell_k}} V_{\io|_{\ell_k}}^{-1} L_{\io,\theta,k}$ is not merely a rotation, and some technical work is required to deal with this additional distortion. We begin by breaking the map $U_{\io|_n} V_{\io|_n}^{-1}$ into three components: The reflection, and two rotations which do not reflect. 

\subsection{The distortion $U_{\io|_{\ell_k}} V_{\io|_{\ell_k}}^{-1} L_{\io,\theta,k}$}

For a linear map $A: \R^2\to\R^2$ and $\theta\in\RP$, let $A|_\theta:\ \theta\to\R^2$ denote the restriction of $A$ onto $\theta$. For every $\theta \in \RP$, let $e_\theta$ denote the unit vector of $\theta$ with non-negative $y$-coordinate. Let $O \subset \Gamma^\N \times \RP$ be the open set of those $(\io, \theta)$ for which $\langle e_{A_{i_0}^{-1}\theta},\, A_{i_0}^{-1}e_\theta\rangle <0$. Here $\langle \cdot,\, \cdot \rangle$ denotes the Euclidean inner product. Recall that $M$ denotes the map $(\io,\theta) \mapsto (\sigma \io, A_{i_0}^{-1}\theta)$. Define the function $\rho:\ \N\times (\Gamma \times \RP) \to \lbrace -1,1\rbrace$, 

\begin{equation}\label{eq-cocycle}
\rho(n, (\io, \theta)) = \prod_{k=1}^n (-1)^{\textbf{1}_{O}(M^k(\io, \theta))}
\end{equation}
where $\textbf{1}_O$ denotes the indicator of $O$. The map $\rho$ captures the reflections done by $A_{\io|_n}^{-1}$ on the line $\theta$, or by $A_{\io|_n}$ on the line $A_{\io|n}^{-1}\theta$. Indeed, for any $x \in \theta$, we may decompose $A_{\io|_n}^{-1}$ as
\begin{equation}\label{eq-roomaar}
A_{\io|_n}^{-1} x = \Vert A_{\io|_n}^{-1}x \Vert R_{A_{\io|_n}^{-1}\theta}^{-1} \rho(n, (\io, \theta)) R_{\theta} x.
\end{equation}
In other words, first rotate $x$ to the $y$-axis, apply the possible reflections, rotate the $y$-axis onto the line $A_{\io|_n}^{-1}\theta$, and finally scale. The map $\rho$ is easily seen to satisfy the \emph{cocycle equation}
$$
\rho(n+k, (\io, \theta)) = \rho(n, M^k(\io, \theta)) \rho(k, (\io, \theta))
$$
for every $n,k\in\N$.

\begin{lemma}\label{lemma-peilausulos}
Let $\io\in \Gamma^\N$ and let $U_{\io|_n} D_{\io|_n} V^{-1}_{\io|_n}$ be the singular value decomposition of $A_{\io|_n}$. For any $\theta\in\RP\setminus \lbrace\theta(\io)\rbrace$, we have
$$
\lim_{n\to \infty}\Vert U_{\io|_n} V_{\io|_n}^{-1}|_{A_{\io|_n}^{-1}\theta} - R_{\theta(\io)^\perp}^{-1} \rho(n, (\io,\theta)) R_{A_{\io|_n}^{-1}\theta}|_{A^{-1}_{\io|_n}\theta} \Vert =0.
$$
\end{lemma}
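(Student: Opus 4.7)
The plan is to combine the SVD $A_{\io|_n}=U_nD_nV_n^{-1}$ with equation \eqref{eq-roomaar} to match the two operators both geometrically (matching the image lines) and orientationally (matching the sign of the isometric embedding on the 1D subspace $A_{\io|_n}^{-1}\theta$).

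First I would observe that $V_n$ takes the $x$-axis onto the line $\theta(A_{\io|_n}^{-1})$ (since $V_ne_1$ is the right singular vector $v_1$, which by the usual computation equals $\theta(A_{\io|_n}^{-1})$), while $U_n$ sends the $x$-axis onto $\theta(A_{\io|_n})^\perp$ (since $U_ne_1=u_1$ is the left singular vector associated to the smaller singular value, perpendicular to $\theta(A_{\io|_n})$). Consequently the orthogonal map $U_nV_n^{-1}$ sends the line $\theta(A_{\io|_n}^{-1})$ isometrically onto $\theta(A_{\io|_n})^\perp$. Corollary \ref{cor-almostcontraction} says $d(A_{\io|_n}^{-1}\theta,\theta(A_{\io|_n}^{-1}))\to 0$, so $U_nV_n^{-1}$ sends $A_{\io|_n}^{-1}\theta$ to a line that converges in $\RP$ to $\theta(A_{\io|_n})^\perp$. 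Since the RHS operator $R_{\theta(A_{\io|_n})^\perp}^{-1}\rho R_{A_{\io|_n}^{-1}\theta}$ maps $A_{\io|_n}^{-1}\theta$ exactly onto $\theta(A_{\io|_n})^\perp$, the two images agree up to $o(1)$.

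Second, since both operators are isometries on the 1D line $A_{\io|_n}^{-1}\theta$ and their images converge to the same line, the difference in operator norm is either $o(1)$ or $2-o(1)$; we must rule out the latter. To do this, fix a unit vector $y\in A_{\io|_n}^{-1}\theta$ and let $\hat{x}:=A_{\io|_n}y/|A_{\io|_n}y|\in\theta$, which is a unit vector. Apply \eqref{eq-roomaar} to $\hat{x}$: since $A_{\io|_n}^{-1}\hat{x}=y/|A_{\io|_n}y|$ and $|A_{\io|_n}^{-1}\hat{x}|=1/|A_{\io|_n}y|$, rearranging gives $R_{A_{\io|_n}^{-1}\theta}y=\rho(n,(\io,\theta))R_\theta \hat{x}$. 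Substituting into the RHS of the lemma, the two copies of $\rho$ cancel and leave
$$
R_{\theta(A_{\io|_n})^\perp}^{-1}\rho(n,(\io,\theta))R_{A_{\io|_n}^{-1}\theta}\,y = R_{\theta(A_{\io|_n})^\perp}^{-1}R_\theta\,\hat{x}.
$$
This explicit form of the RHS bypasses all reference to $\rho$ and depends only on $\hat x$ and the two rotations.

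Third, I would compare this with the LHS via SVD. Writing $V_n^{-1}y=\cos\phi_ne_1+\sin\phi_ne_2$ with $\phi_n\to 0\pmod{\pi}$ (by Step~1), we get $U_nV_n^{-1}y=\cos\phi_n\,u_1+\sin\phi_n\,u_2$, a unit vector converging to $\pm u_1$, while the same expansion yields $A_{\io|_n}y=\alpha_1\cos\phi_n\,u_1+\alpha_2\sin\phi_n\,u_2$. Since $A_{\io|_n}y\in\theta$, normalizing determines $\hat{x}$ and hence the sign of $R_\theta\hat{x}\in\{+e_2,-e_2\}$; a careful bookkeeping, comparing the SVD sign conventions on $(u_1,u_2)$ against the shortest-rotation conventions on $R_{\theta(A_{\io|_n})^\perp}$ and $R_\theta$, shows that this sign agrees (up to $o(1)$) with the sign of $\cos\phi_n$, i.e., with the orientation of $U_nV_n^{-1}y$ along the line $\theta(A_{\io|_n})^\perp$. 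The two sides thus agree up to $o(1)$ on $y$, and by linearity the operator norm on the 1D subspace tends to zero.

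The main obstacle is the last step: although the geometric matching of image lines is immediate, reconciling three independent sign conventions—the nonuniqueness of $U_n,V_n$ in the SVD, the shortest-rotation convention on $R_\phi$, and the cocycle $\rho$ tracking reflections by $A_{\io|_n}^{-1}$—requires a careful, elementary but delicate sign computation. The whole point of the cocycle $\rho$, as defined in \eqref{eq-cocycle}, is to absorb precisely this sign discrepancy via \eqref{eq-roomaar}, which is why the cancellation in Step~2 is the linchpin of the proof.
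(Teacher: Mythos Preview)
Your Steps 1--3 are correct and the reduction in Step 2 is elegant: using \eqref{eq-roomaar} to rewrite the RHS as $R_{\theta(A_{\io|_n})^\perp}^{-1}R_\theta\hat x$ (with $\hat x=A_{\io|_n}y/|A_{\io|_n}y|$) genuinely eliminates $\rho$ from the comparison. The gap is in Step 4. You assert that ``careful bookkeeping'' of the sign conventions shows the orientations match, but in fact they need not. Take $A_{\io|_n}=\mathrm{diag}(\alpha_1^n,\alpha_2^n)$ with $0<\alpha_1<\alpha_2<1$ and $\theta$ at angle $\pi/4$. Then $U_nV_n^{-1}=\mathrm{Id}$, $y\to e_1$, $\hat x=(1,1)/\sqrt2$, $R_\theta\hat x=e_2$, and since $\theta(A_{\io|_n})^\perp$ is the $x$-axis the paper's convention gives $R_0^{-1}e_2=-e_1$. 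So the LHS tends to $e_1$ while your reduced RHS equals $-e_1$; the norm of the difference tends to $2$, not $0$. (For $\theta$ at angle $3\pi/4$ the signs do match, so the discrepancy depends on $(\io,\theta)$.) The cocycle $\rho$ of \eqref{eq-cocycle} is designed to track reflections of $A_{\io|_n}^{-1}$, not the separate sign ambiguity coming from the discontinuity of $R_\cdot$ at the $x$-axis and the SVD conventions; these two bookkeeping devices do not automatically align.

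The paper's argument is organized precisely to expose and then neutralize this residual sign. It writes $U_nV_n^{-1}=B_nA_n$ with $B_n:=U_nD_n^{-1}U_n^{-1}$, decomposes $A_n|_{A_n^{-1}\theta}$ via \eqref{eq-roomaar} (exactly as you do), and then studies $B_n|_\theta$ separately. The point of the factorization is that $B_n$ is \emph{positive definite}, which forces the sign $(-1)^{k_n}$ in $B_n|_\theta\approx(-1)^{k_n}\|A_n^{-1}|_\theta\|R_{\theta(\io)^\perp}^{-1}R_\theta$ to be eventually constant; that constant is then absorbed into $\rho$. Your elimination of $\rho$ in Step 2 removes the very slot into which this constant must go. To repair your argument you would need either to carry out the sign computation explicitly (showing the mismatch is an \emph{eventually constant} $\pm1$ depending only on $(\io,\theta)$, and then absorbing it), or to adopt the paper's $B_nA_n$ factorization, which gives that constancy for free via positive definiteness.
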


\begin{proof}

Let $B_{\io|_n} := U_{\io|_n} D_{\io|_n}^{-1} U_{\io|_n}^{-1}$ and note that
$$
A_{\io|_n} = B_{\io|_n}^{-1} B_{\io|_n} A_{\io|_n} = B_{\io|_n}^{-1} U_{\io|_n} V_{\io|_n}^{-1}.
$$
In particular, $U_{\io|_n} V_{\io|_n}^{-1} = B_{\io|_n} A_{\io|_n}$. 

Now, by \eqref{eq-roomaar}, 
\begin{equation}\label{eq-Ain}
A_{\io|_n}|_{A^{-1}_{\io|_n}\theta} = \Vert A_{\io|_n}|_{A^{-1}_{\io|_n}\theta} \Vert R_\theta^{-1} \rho(n, (\io, \theta)) R_{A_{\io|_n}^{-1}\theta}|_{A^{-1}_{\io|_n}\theta}.
\end{equation}
On the other hand, it follows from Lemma \ref{lemma-A^n} and some basic geometry that
$$
\left\Vert B_{\io|_n}|_\theta - (-1)^{k_n}\Vert A_{\io|_n}^{-1}|_{\theta}\Vert R_{\theta(\io)^\perp}^{-1} R_\theta|_\theta\right\Vert\to 0
$$
as $n\to\infty$, for some sequence $(k_n)_n\in \lbrace 1,2\rbrace^\N$. Since $B_{\io|_n}$ is positive definite and $\theta\neq\theta(\io)$, the sequence $(-1)^{k_n}$ has to be eventually constant. By absorbing the eventual value of this sequence to the definition of $\rho(n,(\io,\theta))$, we may without loss of generality assume that 
\begin{equation}\label{eq-B}
\lim_{n\to\infty} \Vert A_{\io|_n}^{-1}|_\theta\Vert^{-1}B_{\io|_n}|_\theta = R_{\theta(\io)^\perp}^{-1}R_\theta|_\theta.
\end{equation}
Thus, combining \eqref{eq-Ain} and \eqref{eq-B}, we have
$$
\lim_{n\to\infty} \Vert B_{\io|_n} A_{\io|_n} |_{A^{-1}_{\io|_n}\theta} - R_{\theta(\io)^\perp}^{-1} \rho(n, (\io, \theta)) R_{A_{\io|_n}^{-1}\theta}|_{A^{-1}_{\io|_n}\theta}\Vert=0
$$
which is what we wanted to show.
\end{proof}

It remains to study the behaviour of linear map $L_{\io,\theta,k}$ from the statement of Proposition \ref{prop-asymptotics}, as $k\to\infty$. The content of the following lemma is that $U_{\io|_{\ell_k}} V_{\io|_{\ell_k}}^{-1}L_{\io,\theta,k}$ takes the square $H_{Q_{\io,A_{\io|_{\ell_k}}^{-1}\theta, k}}Q_{\io,A_{\io|_{\ell_k}}^{-1}\theta, k} = R_{A_{\io|_{\ell_k}}^{-1}\theta}^{-1}[-1,1]^2$ onto a parallelogram of bounded eccentricity and one side in direction $\theta(\io)$.

\begin{lemma}\label{lemma-suunnikas}
For every $\io\in\Gamma^\N$ and $\theta\in\RP\setminus\lbrace\theta(\io)\rbrace$, 
$$
\Vert U_{\io|_{\ell_k}} V_{\io|_{\ell_k}}^{-1}L_{\io,\theta,k}- R_{\theta(\io)^\perp}^{-1} \rho(\ell_k,(\io,\theta)) F_{\theta,\io}^{k} R_{A_{\io|_{\ell_k}}^{-1}\theta}\Vert \to 0
$$
as $k\to\infty$, where $F_{\theta, \io}^k= \begin{bmatrix} a_k &  b_k\cdot \tan d(\theta,\ \theta(\io)^\perp) \\ 0 & 1 \end{bmatrix}$ for some $a_k,b_k\in\lbrace -1,1\rbrace$.
\end{lemma}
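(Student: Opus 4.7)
The approach is to unpack the definitions of $L_{\io,\theta,k}$ and $U_{\io|_{\ell_k}}V_{\io|_{\ell_k}}^{-1}$, identify their asymptotic behaviour separately, and combine them using Lemma~\ref{lemma-peilausulos}. Writing $\alpha := A_{\io|_{\ell_k}}^{-1}\theta$, $\beta := \theta(A_{\io|_{\ell_k}}^{-1})$, and $D_k := 2^{kN}\,\mathrm{diag}(\alpha_2(A_{\io|_{\ell_k}}),\alpha_1(A_{\io|_{\ell_k}}))$, the definition of $H_Y$ yields
$$H_{Q_{\io,\gamma,k}} = R_\gamma^{-1} D_k R_\gamma T_{\Pi(\sigma^{\ell_k}\io)} \quad \text{for } \gamma\in\{\alpha,\beta\}.$$
The translations cancel in $L_{\io,\theta,k} = H_{Q_{\io,\beta,k}} H_{Q_{\io,\alpha,k}}^{-1}$, leaving the linear map
$$L_{\io,\theta,k} = R_\beta^{-1}\,(D_k R_\beta R_\alpha^{-1} D_k^{-1})\, R_\alpha.$$

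I would next analyze the conjugated rotation in the middle. Since $R_\alpha$ and $R_\beta$ are shortest rotations to the $y$-axis, $R_\beta R_\alpha^{-1}$ is a rotation by a signed angle of magnitude $\eta_k := d(\alpha,\beta)$, and $\eta_k \to 0$ by Lemma~\ref{cor-almostcontraction}. A direct computation gives
$$D_k R_{\pm\eta_k} D_k^{-1} = \begin{pmatrix} \cos\eta_k & \mp(\alpha_2/\alpha_1)\sin\eta_k \\ \pm(\alpha_1/\alpha_2)\sin\eta_k & \cos\eta_k \end{pmatrix}.$$
As $k\to\infty$, the diagonal entries tend to $1$, the off-diagonal entry $(\alpha_1/\alpha_2)\sin\eta_k$ vanishes because $\alpha_1/\alpha_2\to 0$, and equation~\eqref{eq-tangentti} together with the uniform convergence $\theta(A_{\io|_n})\to\theta(\io)$ from Lemma~\ref{lemma-suuntasuppeneekaikkialla} gives $(\alpha_2/\alpha_1)\sin\eta_k \to \pm \tan(d(\theta,\theta(\io)^\perp))$. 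Choosing $b_k\in\{-1,1\}$ to match this sign identifies the middle factor with an upper-triangular matrix of the desired form, up to $o(1)$ in operator norm.

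Finally, I would combine the pieces through
$$U_{\io|_{\ell_k}}V_{\io|_{\ell_k}}^{-1} L_{\io,\theta,k} = \bigl(U_{\io|_{\ell_k}}V_{\io|_{\ell_k}}^{-1} R_\beta^{-1}\bigr)\,(D_k R_\beta R_\alpha^{-1} D_k^{-1})\, R_\alpha,$$
which reduces the problem to showing that $U_{\io|_{\ell_k}}V_{\io|_{\ell_k}}^{-1} R_\beta^{-1}$ is asymptotic to $R_{\theta(\io)^\perp}^{-1}\rho(\ell_k,(\io,\theta))$ in operator norm. Since $d(\alpha,\beta)\to 0$, $R_\beta^{-1}$ agrees with $R_\alpha^{-1}$ asymptotically; and for any vector $v$ on the $y$-axis, $R_\alpha^{-1} v$ lies in $\alpha$, so Lemma~\ref{lemma-peilausulos} yields $U_{\io|_{\ell_k}}V_{\io|_{\ell_k}}^{-1} R_\alpha^{-1} v \to R_{\theta(\io)^\perp}^{-1}\rho(\ell_k,(\io,\theta)) v$. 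Hence $U_{\io|_{\ell_k}}V_{\io|_{\ell_k}}^{-1} R_\beta^{-1}$ and $R_{\theta(\io)^\perp}^{-1}\rho(\ell_k,(\io,\theta))$ are asymptotically orthogonal maps agreeing on the $y$-axis, so they coincide up to at most a reflection about the $y$-axis. This reflection is absorbed into the sign $a_k\in\{-1,1\}$, forced by matching determinants to equal $\mathrm{sign}(\det A_{\io|_{\ell_k}})$. The main obstacle throughout is this careful bookkeeping of $a_k$ and $b_k$: the sign $b_k$ arises from the orientation of $R_\beta R_\alpha^{-1}$, which is sensitive to the discontinuity of $\theta\mapsto R_\theta$ at the $y$-axis, while $a_k$ records the orientation-reversing component of $U_{\io|_{\ell_k}}V_{\io|_{\ell_k}}^{-1}$ that is invisible to the one-dimensional restriction in Lemma~\ref{lemma-peilausulos}.
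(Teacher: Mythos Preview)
Your proposal is correct and follows essentially the same route as the paper's proof: identify the asymptotic form of $L_{\io,\theta,k}$ as $R_\beta^{-1}F^0 R_\alpha$ with $F^0$ upper-triangular via \eqref{eq-tangentti}, then apply Lemma~\ref{lemma-peilausulos} to $U_{\io|_{\ell_k}}V_{\io|_{\ell_k}}^{-1}$ and absorb the remaining reflection into $a_k$. The only difference is presentational: the paper argues geometrically (tracking how the thin rectangle $H_{Q_{\io,A_{\io|_{\ell_k}}^{-1}\theta,k}}^{-1}R_\alpha^{-1}[-1,1]^2$ is stretched onto a parallelogram), whereas you write out $H_{Q_{\io,\gamma,k}} = R_\gamma^{-1}D_k R_\gamma T_{\Pi(\sigma^{\ell_k}\io)}$ explicitly and compute $D_k R_{\pm\eta_k}D_k^{-1}$ as a matrix.
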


\begin{proof}
Recall that $L_{\io,\theta,k} = H_{Q_{\io, \theta(A_{\io|_{\ell_k}}^{-1}), k}} H_{Q_{\io, A_{\io|_{\ell_k}}^{-1}\theta, k}}^{-1}$. As the map $L_{\io,\theta,k}$ acts on the set $R_{A_{\io|_{\ell_k}}^{-1}\theta}^{-1} [-1,1]^2$, it is first taken to the thin rectangle $H_{Q_{\io, A_{\io|_{\ell_k}}^{-1}\theta, k}}^{-1}R_{A_{\io|_{\ell_k}}^{-1}\theta}^{-1} [-1,1]^2$ (see Figure \ref{suunnikas}), and then stretched onto a parallelogram by $H_{Q_{\io, \theta(A_{\io|_{\ell_k}}^{-1}),k}}$.
\begin{figure}[H]
    \begin{tikzpicture}[scale=0.75]
    \draw[rotate=5, fill=lightgray] (-0.3,-2) rectangle (0.3,2);
    \draw (-0.3,2) rectangle (0.3,-2);
    \draw (0,-3);
    \end{tikzpicture}
    \begin{tikzpicture}
\draw[white] (0,2) node[black]{$H_{Q_{\io, \theta(A_{\io|_{\ell_k}}^{-1}), k}}\,\rightarrow$}--(0.5,0);
\end{tikzpicture}
    \resizebox{5 cm}{\width}{
    \begin{tikzpicture}[scale=3.75]
    \draw[rotate=5, fill=lightgray] (-0.3,-2) rectangle (0.3,2);
    \draw (-0.3,2) rectangle (0.3,-2);
    \draw (0,-3);
    \end{tikzpicture}
    }
    \caption{The set $H_{Q_{\io, \theta(A_{\io|_{\ell_k}}^{-1}), k}}^{-1}R_{\theta(A_{\io|_{\ell_k}}^{-1})}^{-1} [-1,1]^2$ is depicted as the thin white rectangle, while the set $H_{Q_{\io, A_{\io|_{\ell_k}}^{-1}\theta, k}}^{-1}R_{A_{\io|_{\ell_k}}^{-1}\theta}^{-1} [-1,1]^2$ is depicted as the thin gray rectangle.}\label{suunnikas}
\end{figure}
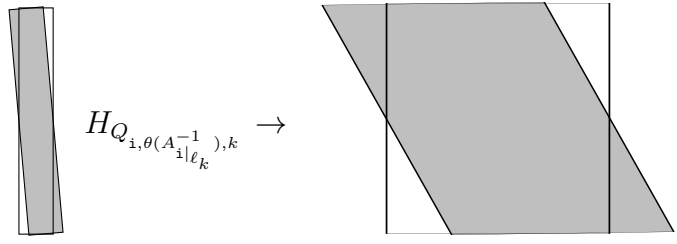
The angle between the two thin rectangles in Figure \ref{suunnikas} is $d(\theta(A_{\io|_{\ell_k}}^{-1}),\  A_{\io|_{\ell_k}}^{-1}\theta)$, so by \eqref{eq-tangentti} and basic geometry, the Hausdorff distance between the parallelograms $L_{\io,\theta,k}R_{A_{\io|_{\ell_k}}^{-1}\theta}^{-1} [-1,1]^2$ and $R_{\theta(A_{\io|_{\ell_k}}^{-1})}^{-1}  F_{\theta,\io}^{k}[-1,1]^2$ tends to $0$ as $k\to\infty$, where in the definition of $F^k_{\theta,\io}$ we have $a_k=1$ and $b_k\in \lbrace -1,1\rbrace$ depends on the order of $\theta(A_{\io|_{\ell_k}}^{-1})$ and $A_{\io|_{\ell_k}}^{-1}\theta$. In particular, 
$$
\Vert L_{\io,\theta,k} - R_{\theta(A_{\io|_{\ell_k}}^{-1})}^{-1}  F_{\theta,\io}^{k}R_{A_{\io|_{\ell_k}}^{-1}\theta} \Vert \to 0 
$$
as $k\to\infty$. It follows from Lemma \ref{cor-almostcontraction} that $d(\theta(A_{\io|_{\ell_k}}^{-1}),\  A_{\io|_{\ell_k}}^{-1}\theta) \to 0$, so using Lemma \ref{lemma-peilausulos} and incorporating the possible reflection for the line $\theta(\io)$ as the value of $a_k$ in the definition of $F_{\theta,\io}^k$ completes the proof.
\end{proof}

\subsection{Proof of Proposition \ref{prop-parempijono}}

We will now explain how to combine Proposition \ref{prop-asymptotics} and Lemmas \ref{lemma-fiberstructure1} and \ref{lemma-suunnikas} to prove Proposition \ref{prop-parempijono} and obtain the fiber structure of $\mu^{D_{k\beta}(\Pi(\io))}$. In \eqref{eq-corofProp51} we noted that by Proposition \ref{prop-asymptotics}, the dyadic maginifactions of $\mu$ have the form
\begin{equation*}
\mu^{D_{k\beta}(\Pi(\io))} = (S_{k\beta-1}^* T_{\Pi(\io)}\mu)_{Q} = (S_1^*U_{\io|_{\ell_k}} V_{\io|_{\ell_k}}^{-1} L_{\io,\theta,k} H_{Q_{\io,A_{\io|_{\ell_k}}^{-1}\theta, k}}\mu_{Q_{\io, A_{\io|_{\ell_k}}^{-1}\theta, k}})_Q
\end{equation*}
where $Q$ is a square of side length $2^{k\beta-\lfloor k\beta\rfloor}$ that contains the origin. Therefore, what stands in the way of using Lemma \ref{lemma-fiberstructure1} to relate $\mu^{D_{k\beta}(\Pi(\io))}$ to a slice of $\mu$ is the presence of the distortion $U_{\io|_{\ell_k}} V_{\io|_{\ell_k}}^{-1} L_{\io,\theta,k}$.

However, because of our freedom in choosing the direction $\theta$, this issue can be resolved by replacing $\Phi$ by a conjugate IFS so that $0^\perp \in \theta^-(\Gamma^\N)$ and $\theta(\Gamma^\N)\subseteq B(0, \varepsilon)$ for some small $\varepsilon>0$. For the conjugated IFS, the map $\pi^2 U_{\io|_{\ell_k}} V_{\io|_{\ell_k}}^{-1} L_{\io,\theta,k}$ will be within distance $\varepsilon$ of the map $\pi^2 R_{A_{\io|_{\ell_k}}^{-1}\theta}$ for all $\theta$ in a set of positive $\mu_F$-measure. Recall that we are assuming the domination condition. 
\begin{claim}\label{claim-conjugate}
    For any $\varepsilon>0$, there exists a non-singular linear map $A_\varepsilon:\R^2\to\R^2$ such that $0^\perp \in A_\varepsilon(\theta^-(\Gamma^\N))$ and $A_\varepsilon(\theta(\Gamma^\N))\subseteq B(0,\varepsilon)$. 
\end{claim}
\begin{proof}[Proof of Claim]
    Note first that the domination condition ensures that $\theta^-(\Gamma^\N) \cap \theta(\Gamma^\N) = \emptyset$, in particular $d(\theta^-(\Gamma^\N), \theta(\Gamma^\N)) > 0$ since the sets are compact. Indeed, if $\mathcal{C}$ is the strongly invariant multicone of $\lbrace A_i\rbrace_{i\in\Gamma}$, then $\RP \setminus {\rm int}(\mathcal{C})$ is the strongly invariant multicone for $\lbrace A_i^{-1}\rbrace_{i\in\Gamma}$ and so it follows from Lemma \ref{lemma-suuntasuppeneeprelminary} that $\theta(\Gamma^\N) \subseteq {\rm int}(\mathcal{C})$ and $\theta^-(\Gamma^\N) \subseteq \RP \setminus {\rm int}(\mathcal{C})$. Now, pick any $\theta_0\in \theta^-(\Gamma^\N)$, and set $A_\varepsilon = {\rm diag}(1/(d(\theta_0, \theta(\Gamma^\N))\varepsilon),\ 1) R_{\theta_0}$.
\end{proof}

We are now ready to prove Proposition \ref{prop-parempijono}.

\begin{proof}[Proof of Proposition \ref{prop-parempijono}]
Let $\Phi = \lbrace \varphi_i\rbrace_{i\in\Gamma}$ be an affine iterated function system satisfying the irreducibility, domination and strong separation conditions. Let $\varepsilon>0$, let $\varepsilon_1>0$ be small with respect to $\varepsilon$ and let $\varepsilon_2>0$ be small with respect to $\varepsilon_1$. We will later see how small these numbers have to be chosen. Let $A_{\varepsilon_2}$ be the linear map obtained by applying Claim \ref{claim-conjugate} with parameter $\varepsilon_2$. We continue to denote the conjugated IFS $\lbrace A_{\varepsilon_2} \circ \varphi_i\circ A_{\varepsilon_2}^{-1}\rbrace_{i\in\Gamma}$ by $\Phi$. Let $\mu$ be a self-affine measure associated to $\Phi$, and denote by $\bmu$ the associated Bernoulli measure on $\Gamma^\N$ and by $\mu_F$ the associated Furstenberg measure on $\RP$.

Let $\beta>0$ be large enough as in Proposition \ref{prop-asymptotics}, and let the sequence $(\ell_k)_{k\in\N} = (\ell_k(\io,\beta))_{k\in\N}$ be defined as in \eqref{eq-lk}. Recall that the IFS $\Phi$ was conjugated so that $\theta(\Gamma^\N)\subseteq B(0,\varepsilon_2)$ and $0^\perp \in {\rm spt}\,\mu_F = \theta^-(\Gamma^\N)$. In particular, $\mu_F(B(0^\perp, \varepsilon_2)) > 0$ and for every $\io\in\Gamma^\N$ we have $B(0^\perp, \varepsilon_2) \subseteq B(\theta(\io)^\perp, 2\varepsilon_2)$.

Now, for $\bmu$-almost every $\io\in \Gamma^\N$ and $\mu_F$-almost every $\theta\in B(0^\perp, \varepsilon_2)$, it follows from Proposition \ref{prop-asymptotics} and Lemma \ref{lemma-suunnikas} that
\begin{align}\label{eq-prop43}
d_{\rm LP}(S_{k\beta-1}^*&T_{\Pi(\io)}\mu,\nonumber\\
& S_1^* \rho(\ell_k,(\io,\theta)) F_{\theta,\io}^{k} R_{A_{\io|_{\ell_k}}^{-1}\theta}H_{Q_{\io, A_{\io|_{\ell_k}}^{-1}\theta,k}} \mu_{Q_{\io, A_{\io|_{\ell_k}}^{-1}\theta,k}})
\to 0
\end{align}
as $k\to\infty$, where $F_{\theta,\io}^k$ is as in Lemma \ref{lemma-suunnikas}. Since $\theta\in B(\theta(\io)^\perp,2\varepsilon_2)$, the map $F_{\theta,\io}^k$ is within distance $2\varepsilon_2$ of ${\rm diag}(a_k, 1)$. In particular, from \eqref{eq-prop43} it follows that
\begin{align}\label{eq-prop43-2}
d_{\rm LP}(\pi^2 S_{k\beta-1}^*&T_{\Pi(\io)}\mu,\nonumber\\
&  \pi^2 S_1^* \rho(\ell_k,(\io,\theta)) R_{A_{\io|_{\ell_k}}^{-1}\theta}H_{Q_{\io, A_{\io|_{\ell_k}}^{-1}\theta,k}} \mu_{Q_{\io, A_{\io|_{\ell_k}}^{-1}\theta,k}})
< 3\varepsilon_2.
\end{align}
for every large enough $k$. Now, it would be relatively easy to conclude from Lemma \ref{lemma-fiberstructure1} that 
$$
\dlp (\pi^2 S_{k\beta-1}^*T_{\Pi(\io)}\mu,\ \rho(\ell_k,(\io,\theta)) \mu_{\sigma^{\ell_k}\io, A_{\io|_{\ell_k}}^{-1}\theta, k\beta-1+\log \alpha_1(\io|_{\ell_k})}) < 3\varepsilon_2
$$
for most $k$. However, we are seeking a version of the above for $\pi^2 \mu^{D_{k\beta}(\Pi(\io))}$ which is why some technical work is still required. In the following, let 
$$
\io^\alpha := 2^{\lfloor\alpha\rfloor}\Pi(\io) \mod 1.
$$
and note that for any $k\in\N$, 
\begin{equation}\label{eq-dyadicscaling}
S_{k\beta}T_{\Pi(\io)} D_{\lfloor k\beta\rfloor} (\Pi(\io)) = S_{k\beta - \lfloor k\beta\rfloor} T_{\io^{k\beta}}([0,1]^2).
\end{equation}
Since $D_{k\beta}(\Pi(\io))\subseteq B(\Pi(\io), 2^{-k\beta+1})$, it follows from \eqref{eq-dyadicscaling} that
\begin{equation}\label{eq-restrictionobservation}
\mu^{D_{k\beta}(\Pi(\io))} = (S_{k\beta-1}^* T_{\Pi(\io)} \mu)_{S_{-1+k\beta-\lfloor k\beta\rfloor}T_{\io^{k\beta}}([0,1]^2)}
\end{equation}
for every $k$. Now, we would like to return to \eqref{eq-prop43} and apply the restriction $(\cdot)_{S_{-1+k\beta-\lfloor k\beta\rfloor}T_{\io^{k\beta}}([0,1]^2)}$ to both measures while retaining a bound for the distance between the measures. According to Lemma \ref{claim-restriction}, this is possible as long as the square $S_{-1}T_{\io^{k\beta}}([0,1]^2)$ has large $S_{k\beta-1}^*T_{\Pi(\io)}\mu$-measure, and the neighbourhood of its boundary has small measure. In Claims \ref{claim-lowerboundformeasure} and \ref{claim-upperboundforboundary}, we perform these standard verifications.

\begin{claim}\label{claim-lowerboundformeasure}
    There exists $\delta>0$ such that for $\bmu$-almost every $\io\in\Gamma^\N$, there exists a set $\cN_\varepsilon^1\subseteq \N$ such that $\liminf_{n\to\infty} \frac{\#(\cN_\varepsilon^1 \cap [0,n])}{n} \geq 1-\varepsilon/2$ and for every $k\in \cN_\varepsilon^1$, 
    $$
S_{k\beta-1}^*T_{\Pi(\io)}\mu(S_{-1+k\beta-\lfloor k\beta\rfloor}T_{\io^{k\beta}}([0,1]^2)) \geq \delta.
    $$
\end{claim}
\begin{proof}[Proof of Claim]
By an ergodic theorem of Bourgain \cite[Theorem 2]{Bourgain1989} and strong mixing of the map $x\mapsto 2x\mod 1$, the sequence $(2^{\lfloor k\beta\rfloor} x\mod 1)_{k\in\N}$ equidistributes for the Lebesgue measure on $[0,1]^2$ for almost every $x$. Therefore By Fubini's theorem, by applying a random translation to $\mu$ (which does not affect the dimension of $\mu*\nu$), we may suppose that for $\bar{\mu}$-almost every $\io$, the sequence $(\io^{k\beta})_{k\in\N}$ equidistributes for the Lebesgue measure on $[0,1]^2$. In particular, for a small enough $\delta_0>0$ there exists $\cN_\varepsilon'\subseteq \N$ with $\liminf_{n\to\infty} \frac{\#(\cN'_\varepsilon\cap[0,n])}{n} \geq 1-\varepsilon/6$ such that 
\begin{equation*}
B(\io^{k\beta}, 4\delta_0) \subseteq [0,1]^2,\ \text{in particular,}\ B(0, \delta_0)\subseteq S_{-1+k\beta-\lfloor k\beta\rfloor} T_{\io^{k\beta}}([0,1]^2)
\end{equation*}
for every $k\in \cN_\varepsilon'$. On the other hand, by \cite[Proposition 1.19]{Hochmanpreprint},
$$
\lim_{n\to\infty} \frac{1}{n}\sum_{k=1}^n \frac{\log S_{k\beta-1}^* T_{\Pi(\io)} \mu(B(0, \delta_0))}{\log \delta_0}\leq \overline{\dim}_{\rm loc} \mu(\Pi(\io))<\infty
$$
for $\bar{\mu}$-a.e. $\io$, so there exists a $\delta>0$ and a set $\cN_\varepsilon''\subseteq \N$ such that $\liminf_{n\to\infty} \frac{\#(\cN_\varepsilon'' \cap[0,n])}{n} \geq 1-\varepsilon/6$ and 
\begin{equation*}
S_{k\beta-1}^*T_{\Pi(\io)} \mu(B(0,\delta_0)) \geq \delta > 0
\end{equation*}
for every $k\in \cN_\varepsilon''$. If we now set $\cN_\varepsilon^1 = \cN_\varepsilon' \cap \cN_\varepsilon''$, then for every $k\in \cN_\varepsilon^1$, we have
\begin{equation*}
S_{k\beta-1}^*T_{\Pi(\io)}\mu(S_{-1+k\beta-\lfloor k\beta\rfloor}T_{\io^{k\beta}}([0,1]^2)) \geq \delta
\end{equation*}
which concludes the proof of the claim.
\end{proof}
Next we will ensure that the neighbourhood of the boundary of $S_{-1+k\beta-\lfloor k\beta\rfloor}T_{\io^{k\beta}}([0,1]^2)$ has small measure for most $k$, which was another requirement of Lemma \ref{claim-restriction}. 
\begin{claim}\label{claim-upperboundforboundary}
    For $\bmu$-almost every $\io\in\Gamma^\N$, there exists a set $\cN_\varepsilon^2\subseteq \N$ such that $\lim_{n\to\infty}\frac{\#(\cN_\varepsilon^2\cap[0,n])}{n} \geq 1-\varepsilon/3$ and for every $k\in\cN_\varepsilon^2$, we have 
$$
S_{k\beta-1}^*T_{\Pi(\io)}\mu(S_{-1+k\beta-\lfloor k\beta\rfloor}T_{\io^{k\beta}}([0,1]^2)^{\varepsilon_2}) \leq S_{k\beta-1}^*T_{\Pi(\io)}\mu(S_{-1+k\beta-\lfloor k\beta\rfloor}T_{\io^{k\beta}}([0,1]^2)) + \varepsilon_1.
$$
\end{claim}
\begin{proof}[Proof of Claim]
For $s,t>0$ let $A_{s,t}\subseteq \mathcal{P}(\R^2)$ denote the closed set of probability measures giving mass at least $t$ to an $s$-neighbourhood of a line within distance at least $\delta_0$ and at most $1/2$ from the origin, that is,
$$
A_{s,t} = \lbrace \nu \in \cP(\R^2):\ \text{there exists a line}\ \ell\ \text{with}\ \delta_0<d(\ell, 0)\leq 1/2\ \text{and}\ \nu(\ell^s) \geq t\rbrace.
$$
If $\varepsilon_2$ was chosen small enough, then for $\bmu$-almost every $\io\in\Gamma^\N$ we have
$$
\limsup_{n\to\infty} \frac{\#\lbrace 0\leq k \leq n:\ S_{k\beta-1}^*T_{\Pi(\io)}\mu\in A_{\varepsilon_2,\varepsilon_1} \rbrace}{n} \leq \varepsilon/3.
$$
Indeed, if this was not the case, then for any $n\in\N$ we could find a weak-$^*$ accumulation point $P_n$ of $(\frac{1}{\ell}\sum_{k=1}^\ell \delta_{S_{k\beta-1}^*T_{\Pi(\io)}\mu})_{\ell\in \N}$ such that $P_n(A_{1/n,\varepsilon_1})\geq \varepsilon/4$. In particular, any accumulation point $P$ of the sequence $(P_n)_{n\in\N}$ would have
\begin{equation}\label{eq-lowerboundcontradiction}
P\left( \bigcap_n A_{1/n,\varepsilon_1}\right) = \lim_{n\to\infty} P(A_{1/n,\varepsilon_1}) \geq \lim_{n\to\infty} \limsup_{m\to\infty} P_m(A_{1/n,\varepsilon_1}) \geq \varepsilon/4.
\end{equation}
For every $n$, the measure $\int_0^\beta S_r^* P_n\,dr$ is a fractal distribution by \cite[Proposition 5.5]{Hochmanpreprint}. Since the function $\nu\mapsto \int_0^\beta S_r^*\nu \,dr$ is continuous and the space of fractal distributions is closed by \cite[Theorem 3.1]{KaenmakiSahlstenShmerkin2015}, also $P' = \int_0^\beta S_r^* P \,dr$ is a fractal distribution. From \eqref{eq-lowerboundcontradiction} it follows that $P'(\bigcap_{n} A_{1/n,\varepsilon_1/2}) > 0$, contradicting Lemma \ref{lemma-nolines0}. Therefore, if we define for every $\io\in\Gamma^\N$ the set $\cN_\varepsilon^2 = \lbrace k\in\N:\ S_{k\beta-1}^*T_{\Pi(\io)}\mu\in A_{\varepsilon_2, \varepsilon_1}\rbrace$, then for $\bmu$-almost every $\io$ we have $\liminf_{n\to\infty} \frac{\#(\cN_\varepsilon^2 \cap [0,n])}{n} \geq 1-\varepsilon/3$ as long as $\varepsilon_2$ is small enough. Clearly, the statement of the claim holds for every $k\in\cN_\varepsilon^2$.
\end{proof}

Let us now resume the main line of the proof. For every $k\in\cN_\varepsilon^1\cap\cN_\varepsilon^2$, it was verified in Claims \ref{claim-lowerboundformeasure} and \ref{claim-upperboundforboundary} that the measures in \eqref{eq-prop43} satisfy the conditions Lemma \ref{claim-restriction}, and so we have 
\begin{align}\label{eq-prop43-3}
&d_{\rm LP}\big(\mu^{D_{k\beta}(\Pi(\io))},\nonumber\\
&\qquad (S_1^*\rho(\ell_k,(\io,\theta)) {\rm diag}(a_k, 1) R_{A_{\io|_{\ell_k}}^{-1}\theta}H_{Q_{\io, A_{\io|_{\ell_k}}^{-1}\theta,k}} \mu_{Q_{\io, A_{\io|_{\ell_k}}^{-1}\theta,k}})_{S_{-1+k\beta-\lfloor k\beta\rfloor} T_{\io^{k\beta}}([0,1]^2)}\big) \nonumber\\
&\qquad\qquad < O(\varepsilon_1/\delta)
\end{align}
as long as $\varepsilon_2$ is small enough with respect to $\varepsilon_1$. All that remains to be done is to apply Lemma \ref{lemma-fiberstructure1}. Let $\varepsilon_3>0$ be small enough with respect to $\varepsilon>0$ and $\beta$. Let $X\subseteq \Gamma^\N\times\RP$ be the set given by Lemma \ref{lemma-fiberstructure1} and Egorov's theorem, such that $\bmu\times\mu_F(X)>1-\varepsilon_3$ and for all large enough $r_2\geq r_2(\varepsilon_3)$, every $0\leq r_1\leq O(\beta)$, every $(\io,\theta)\in X$ and any square $Q$ of side length $2^{k\beta-\lfloor k\beta\rfloor}$ containing the origin and with $R_\theta H_{Y_{\io,\theta,r_1,r_2}} \mu_{Y_{\io,\theta,r_1,r_2}}(Q)\geq \delta$, we have
\begin{equation}\label{eq-defofN4}
d_{\rm LP}\left( \pi^2(R_\theta H_{Y_{\io,\theta,r_1,r_2}} \mu_{Y_{\io,\theta,r_1,r_2}})_{Q},\ (\mu_{\io,\theta,r_1})_{\pi^2 Q}\right) < \varepsilon/2.
\end{equation}
For every $(\io,\theta)$ we let $\cN_\varepsilon^3\subseteq \N$ be the set of those $k$ for which $M^{\ell_k}(\io,\theta) \in X$. For $\bar{\mu}\times \mu_F$-almost every $(\io,\theta)$, we have $\liminf_{n\to\infty} \frac{\#\lbrace 0\leq k \leq n:\ M^k(\io,\theta) \in X\rbrace}{n} \geq 1-\varepsilon_3$ by Birkhoff's ergodic theorem, and since it is not difficult to verify from the definition of $\ell_k$ (in \eqref{eq-lk}) that $\ell_k \leq O_\beta(k)$, we have $\liminf_{n\to\infty} \frac{\#(\cN_\varepsilon^3\cap[0,n])}{n}\geq1-\varepsilon/3$ if $\varepsilon_3$ was chosen small enough.

Finally, setting $\cN_\varepsilon = \cN_\varepsilon^1\cap\cN_\varepsilon^2\cap\cN_\varepsilon^3$ and combining \eqref{eq-prop43-3} with \eqref{eq-defofN4}, we see that
$$
d_{\rm LP}\left(\pi^2\mu^{D_{k\beta}(\Pi(\io))},\left( \rho(\ell_k, (\io,\theta)) \mu_{\sigma^{\ell_k}\io,\,A_{\io_{\ell_k}}^{-1}\theta,\, k\beta -1+ \log \alpha_1(\io|_{\ell_k})}\right)_{I_k} \right) < \varepsilon/2 + O(\varepsilon_1/\delta) < \varepsilon,
$$
for every $k\in \cN_\varepsilon$, when $I_k:= \pi^2S_{-1+k\beta-\lfloor k\beta\rfloor}T_{\io^{k\beta}}([0,1]^2)$. This is what we set out to prove.
\end{proof}

\section{Bounding the local entropy averages}\label{section6}
In this section, we prove Claims \ref{claim-easycase}, \ref{claim-1} and \ref{claim-2}. In order to do this, we need to understand the dynamics of the sequences $(\mu_{\io|_{i_k}})_{k\in\N}$, $(\nu_{\jo|_{i_k}})_{k\in\N}$ and $(\pi^2 \mu^{D_{k\beta}(\Pi(\io))})_{k\in\N}$. 

\subsection{Auxiliary suspension flows}
Let $\mu$ and $\nu$ be self-affine measures associated to iterated function systems $\Phi = \lbrace \varphi_i(x) = A_i x + a_i\rbrace_{i\in\Gamma}$ and $\Psi = \lbrace \psi_j(x) = B_j x + b_j \rbrace_{j\in\Lambda}$ satisfying the total irreduciblity and hyperbolicity conditions. We require a cocycle acting on $\Lambda^\N\times\RP$ similar to $\rho$ of the previous section, which we also denote by $\rho$ since it plays an identical role: Let $O\subseteq \Lambda^\N \times \RP$ denote the open set of those $(\jo,\theta)$ for which $\langle e_{B_{j_0}^*\theta},\, B_{j_0}^* e_\theta\rangle <0$, where $e_\theta$ denotes the unit vector of $\theta$ with non-negative $y$-coordinate, and set
\begin{equation}\label{eq-roopsi}
\rho(k,(\jo,\theta)) = \prod_{\ell=1}^k (-1)^{\chi[O](M_*^\ell(\jo,\theta))}.
\end{equation}
Recall that $M_*:\Lambda^\N\times\RP\to\Lambda^\N\times\RP$ denotes the function $(\jo,\theta)\mapsto (\sigma\jo, B_{j_0}^*\theta)$.

Define the Hölder continuous functions 
\begin{align*}
f: \Gamma^\Z \times \RP \to \R,&\ (\io, \theta) \mapsto -\log \Vert A_{i_0} |_{A_{i_0}^{-1}\theta}\Vert\\
g: \Lambda^\Z \times \RP \to \R,&\ (\jo,\theta) \mapsto -\log \Vert B_{j_0}^* |_\theta\Vert
\end{align*}
and the sets 
\begin{align*}
Z_\Phi &:= \lbrace (\io, \theta, u,t):\ \io\in\Gamma^\Z, \theta\in \RP, u\in\lbrace-1,1\rbrace,0 \leq t \leq f(\io, \theta) \rbrace, \\
Z_\Psi &:= \lbrace (\jo,\theta,u,t):\ \jo \in \Lambda^\Z, \theta \in \RP, u\in\lbrace-1,1\rbrace, 0\leq t \leq g(\jo,\theta) \rbrace,
\end{align*}
both equipped with the identifications $(\io, \theta,u, f(\io, \theta)) = (M(\io, \theta),\rho(1,(\io,\theta))u, 0)$ and $(\jo,\theta,u,g(\jo,\theta)) = (M_*(\jo,\theta), \rho(1,(\jo,\theta))u,0)$.

Let $\mathcal{T}_s$ denote the flow induced by the positive reals on both $Z_\Phi$ and $Z_\Psi$, given by
$$
\mathcal{T}_s: (\ko,\theta,u, t) \mapsto (\ko,\theta,u, t+s)
$$
for every $s\geq 0$ and $(\ko,\theta,u, t)\in Z_\Phi \cup Z_\Psi$. 

Let $\gamma = \delta_{-1}/2 + \delta_1/2$. Denote by $\lambda_\Phi$ the measure $\bmu \times \mu_F\times \gamma \times\mathcal{L}$ restricted and normalized on $Z_\Phi$, and let $\lambda_\Psi$ denote the normalized restriction of $\bnu\times \nu_F^* \times \gamma\times \mathcal{L}$ on $Z_\Psi$. 

Denote by $\mathcal{Z}_\Phi$ and $\mathcal{Z}_\Psi$ the suspensions $(Z_\Phi, \lambda_\Phi, \mathcal{T}_s)$ and $(Z_\Psi, \lambda_\Psi, \mathcal{T}_s)$. It is not difficult to see that they are measure-preserving. Since the skew-product maps 
\begin{align*}
(\io,\theta,u) &\mapsto (M(\io,\theta), \rho(1,(\io,\theta)) u), \\
(\jo,\theta,u) &\mapsto (M_*(\jo,\theta), \rho(1,(\jo,\theta)) u)
\end{align*}
are $\bmu\times\mu_F\times\gamma$- and $\bnu\times\nu_F^*\times\gamma$-ergodic, respectively, by \cite[Corollary 5.4]{Schmidt1977}, it is standard that these suspensions are also ergodic. 

Let $\mathcal{Z}_\Phi' = \pi^{1,2,4} \mathcal{Z}_\Phi$ and $\mathcal{Z}_\Psi' = \pi^{1,2,4}\mathcal{Z}_\Psi$. Here $\pi^{1,2,4}$ denotes the projection onto the first, second and fourth coordinate. The systems $\mathcal{Z}_\Phi'$ and $\mathcal{Z}_\Psi'$ are suspension flows of $\Gamma^\Z\times\RP$ and $\Lambda^\Z\times \RP$ over $f$ and $g$, respectively, and preserve the measures $\lambda_\Phi' := \bmu\times\mu_F\times \mathcal{L}$ and $\lambda_\Psi' := \bnu\times\nu_F^*\times\mathcal{L}$, restricted and normalized on the sets $Z_\Phi' = \pi^{1,2,4}Z_\Phi$ and $Z_\Psi' = \pi^{1,2,4}Z_\Psi$. 

The key observation we require on the dynamics of $\mathcal{Z}_\Phi'\times \mathcal{Z}_\Psi'$ is the following. If one were able to prove this proposition without requiring the domination condition, then it is very likely that with little work the assumption of domination could be removed from Theorem \ref{theorem-main} as well. 

\begin{proposition}\label{claim-ergodicproduct}
If the systems $\Phi$ and $\Psi$ satisfy the domination condition and there exists a pair $(i,j)\in \Gamma\times\Lambda$ with
$$
\frac{\log |\lambda_1(A_i)|}{\log |\lambda_2(B_j)|} \not\in\Q,
$$
then for any $N_0\in\N$, the flow $\mathcal{Z}_\Phi' \times \mathcal{Z}_\Psi'$ is ergodic under the discrete-time map $\mathcal{T}_{\beta}$ for some real number\footnote{In an earlier version of the paper, it was claimed that $\beta$ can be taken to be an integer. However, the proof of this claim contained an error, and the applications of Proposition \ref{claim-ergodicproduct} have now been modified to accommodate for a non-integer $\beta$.} $\beta \geq N_0$. 
\end{proposition}

Suppose now that $\Phi$ and $\Psi$ satisfy the domination condition. Let $f':\Gamma^\Z \to (0,+\infty)$ denote the map $\io\mapsto f(\io^+, \theta^-(\io^-))$ which is well-defined and Hölder continuous by Lemma \ref{lemma-suuntasuppeneekaikkialla}, and let 
$$
Z_\Phi'' = \lbrace (\io, t):\ \io \in \Gamma^\Z, 0 \leq t \leq f'(\io)\rbrace.
$$
Let $\lambda''_\Phi$ denote the measure $\bmu \times \mathcal{L}$ restricted and normalized on $Z_\Phi''$, and write $\mathcal{Z}''_\Phi = (Z_\Phi'', \cT_s, \lambda''_\Phi)$ for the suspension of $\Gamma^\Z$ over $f'$. Similarly, let $\mathcal{Z}_\Psi''$ be the suspension of $\Lambda^\Z$ over $g'(\jo) := g(\jo^+, \theta^*(\jo^-))$.

We will show that the flow $\mathcal{Z}_\Phi'' \times \mathcal{Z}_\Psi''$ is ergodic, which will immediately imply ergodicity of $\mathcal{Z}_\Phi' \times\mathcal{Z}_\Psi'$ since the latter system is a factor of the former. Borrowing an idea of Bowen \cite{Bowen1975book}, let $r: \Gamma^\Z\to \Gamma^\Z$ denote the function which replaces all the negative coordinates of $\ldots i_{-1};i_0i_1\ldots$ by $i_0$. Then $f'$ is cohomologous to $h:= f'\circ r$, that is, 
\begin{equation}\label{eq-cohomologous}
f' = h + u - u \circ \sigma,
\end{equation}
where $u$ is the continuous function defined by $u(\io) = \sum_{k=0}^\infty (f'(\sigma^k\io) - f'(\sigma^k r(\io)))$. The sum converges since $f'$ is Hölder. In particular, the flow $\mathcal{Z}_\Phi''$ is conjugate to the suspension of $\Gamma^\Z$ over $h$, denoted by $\mathcal{Z}_\Phi^h = (Z_\Phi^h, \mathcal{T}_s, \lambda_\Phi^h)$, and the advantage of this is that the function $h$ depends only on the positive coordinates of $\Gamma^\Z$.  

\begin{lemma}\label{lemma-eigenvalues1}
If $\Phi$ satisfies the domination condition, then the eigenvalues of the flow $\mathcal{Z}_\Phi''$ are contained in the set $$\bigcap_{i \in \Gamma}(\log|\lambda_1(A_i)|)^{-1} \Q.$$ 
\end{lemma}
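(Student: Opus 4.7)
The strategy is to first simplify the roof function to a locally constant one via cohomology, and then apply a classical fixed-point argument on continuous eigenfunctions.

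First, by the cohomology identity \eqref{eq-cohomologous}, the flow $\mathcal{Z}_\Phi''$ is measure-theoretically conjugate to the suspension $\mathcal{Z}_\Phi^h$ of $(\Gamma^\Z, \sigma, \bmu)$ over the Hölder roof $h = f' \circ r$, so in particular they share the same eigenvalues. The key observation is that $h$ is not just Hölder but in fact locally constant. Indeed, for any $\io \in \Gamma^\Z$ the sequence $r(\io)^- = (i_0, i_0, i_0, \ldots)$ is constant, so applying Lemma \ref{lemma-A^n} to $A_{i_0}^{-1}$ gives $\theta^-(r(\io)^-) = v_1(A_{i_0})$, an eigenspace of $A_{i_0}$. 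Hence $A_{i_0}^{-1}\theta^-(r(\io)^-) = v_1(A_{i_0})$, so
\[
 h(\io) \;=\; -\log \|A_{i_0}|_{v_1(A_{i_0})}\| \;=\; -\log |\lambda_1(A_{i_0})|
\]
depends only on the zeroth coordinate $i_0$.

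Since $h$ depends only on the positive coordinates, Lemma \ref{lemma-samatominaisarvot} shows that the eigenvalues of $\mathcal{Z}_\Phi^h$ coincide with those of its one-sided analogue $\mathcal{Z}_\Phi^{h,+}$, the suspension of $(\Gamma^\N, \sigma, \bmu^+)$ over $h$. Because $\bmu^+$ is the equilibrium state of a locally constant (hence locally Hölder) potential, Proposition \ref{preliminary-weakmixing} guarantees that every eigenvalue $\alpha$ of $\mathcal{Z}_\Phi^{h,+}$ admits a continuous eigenfunction $F : Z_\Phi^{h,+} \to \C$, which, since the flow is measure-preserving and ergodic and $\bmu^+$ has full support, we may normalize so that $|F| \equiv 1$ everywhere on $Z_\Phi^{h,+}$.

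Setting $\phi(\io) := F(\io, 0)$, the relation $F \circ \cT_s = e(\alpha s) F$ forces $F(\io, t) = e(\alpha t) \phi(\io)$ on the fundamental domain, and the roof identification $(\io, h(\io)) \sim (\sigma\io, 0)$ yields the functional equation
\[
\phi(\sigma \io) \;=\; e(\alpha h(\io))\, \phi(\io), \qquad \io \in \Gamma^\N.
\]
For each $i \in \Gamma$, evaluate this at the fixed point $\io = i^\infty = iii\cdots$: since $\sigma \io = \io$ and $|\phi(i^\infty)| = 1 \neq 0$, we must have $e(-\alpha \log|\lambda_1(A_i)|) = 1$, i.e. $\alpha \log|\lambda_1(A_i)| \in \Z \subseteq \Q$. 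Thus $\alpha \in (\log|\lambda_1(A_i)|)^{-1}\Q$, and taking the intersection over $i \in \Gamma$ gives the lemma.

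The main conceptual step is the observation that the Bowen trick $h = f' \circ r$ collapses the dependence on the negative coordinates to a single value $v_1(A_{i_0})$, turning $h$ into a function of $i_0$ alone; once this is in place, the remainder is a standard Parry--Pollicott argument exploiting continuity of eigenfunctions together with evaluation at periodic orbits of the shift.
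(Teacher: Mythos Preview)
Your proof is correct and follows essentially the same route as the paper: reduce to the one-sided suspension $\mathcal{Z}_\Phi^{h,+}$ via the cohomology and Lemma~\ref{lemma-samatominaisarvot}, invoke Proposition~\ref{preliminary-weakmixing} to obtain a continuous eigenfunction, and evaluate the resulting functional equation at the fixed points $i^\infty$. Your additional observation that $h(\io) = -\log|\lambda_1(A_{i_0})|$ depends only on the zeroth coordinate (rather than merely computing $h$ at the fixed point via the coboundary, as the paper does) is a clean strengthening but not strictly needed for the argument.
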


\begin{proof}
Since $\mathcal{Z}_\Phi''$ is conjugate to $\mathcal{Z}_\Phi^h$ and conjugate flows have the same eigenvalues, it suffices to prove the statement for $\mathcal{Z}_\Phi^h$. Let $\beta \neq 0$ be an eigenvalue of $\mathcal{Z}_\Phi^h$. By Lemma \ref{lemma-samatominaisarvot}, there exists $n\in\Z$ such that $n\beta$ is an eigenvalue of $(\mathcal{Z}_\Phi^h)^+$. 

Let $i \in \Gamma$, and let $\io_0 = \ldots iii \ldots \in \Gamma^\Z$. Now, $|\lambda_1(A_i)|^{-1} = |\lambda_2(A_i^{-1})| = \Vert A_i^{-1} |_{\theta(\io_0^-)} \Vert = \Vert A_i |_{\theta^-(\io_0^-)} \Vert^{-1}$. In particular, 
$$
f'(\io_0) = -\log \Vert A_i |_{A_i^{-1}\theta^-(\io_0^-)}\Vert =-\log|\lambda_1(A_i)|.
$$
From \eqref{eq-cohomologous} it is clear that also $h(\io_0) = -\log |\lambda_1(A_i)|$. 

Since $n\beta$ is an eigenvalue of $(\mathcal{Z}_\Phi^h)^+$, Proposition \ref{preliminary-weakmixing} asserts that there exists a \emph{continuous} eigenfunction $\phi$ for $n\beta$, so we have
\begin{equation}\label{eq-continuouseigenfunction}
\phi(\mathcal{T}_s(\io, t)) = e(n\beta s)\phi(\io, t)
\end{equation}
for \emph{every} $(\io, t) \in (Z_\Phi^h)^+$ and $s \geq 0$. Since $(\mathcal{Z}_\Phi^h)^+$ is ergodic, $|\phi|$ is constant everywhere, so we may let $\psi: (Z_\Phi^h)^+ \to \R$ be the real-valued function defined by $\phi(\io, t) = |\phi|e(\psi(\io,t))$ for every $(\io,t)\in (Z_\Phi^h)^+$. We obtain from \eqref{eq-continuouseigenfunction} that
$$
\psi(\mathcal{T}_s(\io,t)) = n\beta s + \psi(\io,t) + m(\io, t)
$$
for some integer-valued function $m: (Z_\Phi^h)^+ \to \Z$. Inserting the value $s = h(\io)$, we obtain
$$
\psi(\sigma \io, t) = n \beta h(\io) + \psi(\io, t) + m(\io, t)
$$
which is equivalent to
$$
h(\io) = (n\beta)^{-1} m(\io, t) + (n\beta)^{-1} \psi(\io,t) - (n\beta)^{-1} \psi(\sigma \io, t).
$$
In particular, $h(\io_0) = (n\beta)^{-1}m(\io_0, t) \in \beta^{-1} \Q$ since $\sigma(\io_0) = \io_0$. However, we saw above that $h(\io_0) = -\log|\lambda_1(A_i)|$, whence it follows that $\beta \in (\log |\lambda_1(A_i)|)^{-1}\Q$.
\end{proof}

\begin{lemma}\label{lemma-eigenvalues2}
If $\Psi$ satisfies the domination condition, then the eigenvalues of $\mathcal{Z}_\Psi''$ are contained in the set 
$$
\bigcap_{j \in \Lambda} (\log |\lambda_2(B_j)|)^{-1}\Q
$$
\end{lemma}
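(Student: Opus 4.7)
The plan is to follow the strategy of Lemma \ref{lemma-eigenvalues1} verbatim, with adjoints $B_j^*$ and the function $\theta^*$ replacing the inverses $A_i^{-1}$ and $\theta^-$. First I would let $r: \Lambda^\Z \to \Lambda^\Z$ replace every negative coordinate by $j_0$ and set $h := g' \circ r$; H\"older continuity of $g'$ gives the analogue of the coboundary identity \eqref{eq-cohomologous}, so $\mathcal{Z}_\Psi''$ is conjugate to the suspension of $(\Lambda^\Z, \sigma, \bar{\nu})$ over $h$. Since $h$ depends only on the positive coordinates, Lemma \ref{lemma-samatominaisarvot} lets me pass to the one-sided suspension without losing any eigenvalues, and Proposition \ref{preliminary-weakmixing} then produces a \emph{continuous} eigenfunction $\phi = e(\psi)$ for any prescribed eigenvalue $\beta \neq 0$.

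The key step is the evaluation of $h$ at the fixed point $\jo_0 = \ldots j j j \ldots \in \Lambda^\Z$. Since $r(\jo_0) = \jo_0$, we have $h(\jo_0) = g'(\jo_0) = -\log \Vert B_j^*|_{\theta^*(\jo_0^-)} \Vert$. By Lemma \ref{lemma-A^n}, $\theta^*(\jo_0^-) = \lim_{n\to\infty} \theta((B_j^*)^n) = v_2(B_j^*)$, and because $B_j$ and $B_j^*$ share the same eigenvalues, this line is preserved by $B_j^*$ with norm $|\lambda_2(B_j^*)| = |\lambda_2(B_j)|$. Therefore $h(\jo_0) = -\log |\lambda_2(B_j)|$. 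This is the only place where the proof differs substantively from Lemma \ref{lemma-eigenvalues1}: the asymmetry between $\lambda_1$ and $\lambda_2$ is due to the fact that $A \mapsto A^{-1}$ inverts the spectrum while $A \mapsto A^*$ preserves it, so the top eigenvector of the forward iteration of $B_j^*$ corresponds to the larger eigenvalue $\lambda_2(B_j)$ rather than the smaller one.

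The remainder is routine: inserting $s = h(\jo_0)$ into the eigenfunction relation $\psi(\mathcal{T}_s(\jo,t)) = \beta s + \psi(\jo,t) + n(\jo,t)$ with $n$ integer-valued, and using $\sigma \jo_0 = \jo_0$ to cancel the $\psi$-terms, gives
$$
h(\jo_0) = \beta^{-1} n(\jo_0,t) \in \beta^{-1} \Z,
$$
so $\beta \in (\log |\lambda_2(B_j)|)^{-1} \Q$. Intersecting over $j \in \Lambda$ yields the claim. I expect no genuine obstacle beyond the fixed-point computation above; the cohomology reduction, the passage to the one-sided shift, and the eigenfunction manipulation all transfer unchanged from the proof of Lemma \ref{lemma-eigenvalues1}.
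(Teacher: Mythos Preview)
Your proposal is correct and follows exactly the paper's approach: the paper computes $g'(\jo_0) = -\log|\lambda_2(B_j)|$ at the fixed point $\jo_0 = \ldots jjj\ldots$ and then says ``by replacing $\mathcal{Z}_\Psi''$ with a conjugate flow, we can proceed exactly as in the proof of the previous claim,'' which is precisely the Bowen cohomology reduction, the passage to the one-sided shift via Lemma~\ref{lemma-samatominaisarvot}, and the continuous-eigenfunction argument you spell out. Your fixed-point computation via Lemma~\ref{lemma-A^n} and the observation that $A\mapsto A^*$ preserves the spectrum (so that $|\lambda_2(B_j^*)|=|\lambda_2(B_j)|$) match the paper's one-line calculation.
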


\begin{proof}
Let $\jo_0 = \ldots jjj\ldots$. Then $\jo_0$ is a fixed point for $\sigma$, and
$$
g'(\jo_0) = -\log \Vert B_j^*|_{\theta^*(\jo_0^-)}\Vert = -\log |\lambda_2(B_j^*)| = -\log |\lambda_2(B_j)|
$$
and by replacing $\mathcal{Z}_\Psi''$ with a conjugate flow, we can proceed exactly as in the proof of the previous claim. 
\end{proof}

\begin{proof}[Proof of Proposition \ref{claim-ergodicproduct}]
By Lemmas \ref{lemma-eigenvalues1} and \ref{lemma-eigenvalues2} and the assumption of Proposition \ref{claim-ergodicproduct}, the flows $\mathcal{Z}_\Phi''$ and $\mathcal{Z}_\Psi''$ have no common eigenvalues other than $0$. Thus by Proposition \ref{preliminary-ergodicproduct}, the product $\mathcal{Z}_\Phi''\times\mathcal{Z}_\Psi''$ is ergodic. Since it has at most countably many eigenvalues by Lemma \ref{preliminary-countablymanyeigenvalues}, for any $N_0\in\N$ there exists a real number $\beta>N_0$ such that it is also ergodic under the discrete-time map $\mathcal{T}_\beta$. 

Now, since we have the domination assumption in place, the system $\mathcal{Z}_\Phi'\times\mathcal{Z}_\Psi'$ is a factor of the system $\mathcal{Z}_\Phi''\times\mathcal{Z}_\Psi''$ through the map 
$$
(\io,t,\jo,s) \mapsto (\io^+, \theta^-(\io^-),t,\jo^+,\theta^*(\jo^-), s).
$$ 
Since factor maps preserve ergodicity, this proves the statement.
\end{proof}

\subsection{Dynamics of the sequence $(\mu^{D_{k\beta}(\Pi(\io))})_{k\in\N}$}\label{subsec-magnificationsofmu}
Let $\Phi= \lbrace \varphi_i(x) = A_i x + a_i\rbrace_{i\in\Gamma}$ be an affine iterated function system satisfying the irreducibility, domination and strong separation conditions, let $\mu$ be a self-affine measure associated to $\Phi$ and let $\mathcal{Z}_\Phi$ be as in the previous section. The reason for requiring domination and strong separation in this section (even though they were not required in the construction of $\mathcal{Z}_\Phi$) is that they allow us to use Proposition \ref{prop-parempijono} to relate $(\pi^2\mu^{D_{k\beta}(\Pi(\io))})_{k\in \N}$ to an orbit of a point in $\mathcal{Z}_\Phi$.

\begin{lemma}\label{lemma-geomlemma}
For $\bmu$-almost every $\io\in\Gamma^\N$ and every $\theta\neq\theta(\io)$, there exists a constant $C(\io,\theta) > 0$ such that 
$$
\lim_{n\to\infty} \frac{\Vert A_{\io|_n} |_{A_{\io|_n}^{-1}\theta}\Vert}{\alpha_1(A_{\io|_n})} = 2^{C(\io,\theta)}.
$$
\end{lemma}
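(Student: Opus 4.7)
The strategy is to reduce the claim to an explicit computation using the singular value decomposition. Write $A_{\io|_n} = U_{\io|_n} D_{\io|_n} V_{\io|_n}^{-1}$ with $D_{\io|_n} = {\rm diag}(\alpha_1(A_{\io|_n}), \alpha_2(A_{\io|_n}))$. The map $A_{\io|_n}$ sends the unit disk to the ellipse with semi-axes $\alpha_2$ (in direction $\theta(A_{\io|_n})$) and $\alpha_1$ (perpendicular). The image of the unit vector in direction $A_{\io|_n}^{-1}\theta$ is precisely the endpoint of the chord through the origin of this ellipse in direction $\theta$, so
\begin{equation*}
\Vert A_{\io|_n}|_{A_{\io|_n}^{-1}\theta}\Vert \;=\; \frac{\alpha_1(A_{\io|_n})\,\alpha_2(A_{\io|_n})}{\sqrt{\alpha_1(A_{\io|_n})^2\cos^2\phi_n + \alpha_2(A_{\io|_n})^2\sin^2\phi_n}},
\end{equation*}
where $\phi_n := d(\theta, \theta(A_{\io|_n}))$. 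Dividing by $\alpha_1(A_{\io|_n})$ yields the scale-free expression
\begin{equation*}
\frac{\Vert A_{\io|_n}|_{A_{\io|_n}^{-1}\theta}\Vert}{\alpha_1(A_{\io|_n})} \;=\; \frac{1}{\sqrt{(\alpha_1(A_{\io|_n})/\alpha_2(A_{\io|_n}))^2\cos^2\phi_n + \sin^2\phi_n}}.
\end{equation*}

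Now I invoke two convergence facts already available. First, by Lemma \ref{lemma-suuntasuppeneekaikkialla}, $\theta(A_{\io|_n}) \to \theta(\io)$ uniformly, so $\phi_n \to d(\theta, \theta(\io))$. Second, by the theorem of Bochi--Gourmelon cited in the proof of Lemma \ref{cor-almostcontraction}, $\alpha_1(A_{\io|_n})/\alpha_2(A_{\io|_n}) \to 0$. For almost every $(\io,\theta)$ we have $\theta \neq \theta(\io)$ (since the relevant measure on $\RP$, e.g.\ the Furstenberg measure or Lebesgue, is non-atomic while $\theta(\io)$ is a single direction depending on $\io$), so $\sin(d(\theta, \theta(\io))) \neq 0$ and the denominator above converges to $|\sin(d(\theta, \theta(\io)))|$. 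Consequently
\begin{equation*}
\frac{\Vert A_{\io|_n}|_{A_{\io|_n}^{-1}\theta}\Vert}{\alpha_1(A_{\io|_n})} \;\longrightarrow\; C(\io,\theta) \;:=\; \frac{1}{|\sin(d(\theta,\theta(\io)))|} \;>\;0,
\end{equation*}
which proves the claim. There is no serious obstacle; the only small point to verify is the chord-length identity for $\Vert A_{\io|_n}|_{A_{\io|_n}^{-1}\theta}\Vert$, which is immediate from the SVD by first rotating by $U_{\io|_n}^{-1}$ to put the ellipse in standard position and then intersecting with the line through the origin at angle $\phi_n$ to the major axis.
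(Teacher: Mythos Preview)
Your proof is correct and follows essentially the same route as the paper's: both interpret $\Vert A_{\io|_n}|_{A_{\io|_n}^{-1}\theta}\Vert$ as the half-chord of the image ellipse $A_{\io|_n}B(0,1)$ in direction $\theta$, and both conclude via Lemma~\ref{lemma-suuntasuppeneekaikkialla} (so that $\phi_n\to d(\theta,\theta(\io))$) together with $\alpha_1/\alpha_2\to 0$. Your explicit SVD computation gives the limit $1/|\sin(d(\theta,\theta(\io)))|$, which is exactly the paper's $(\cos(d(\theta,\theta(\io)^\perp)))^{-1}$ written differently.
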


\begin{proof}
Since $\Vert A_{\io|_n}|_{A_{\io|_n}^{-1}\theta}\Vert = |A_{\io|_n}B(0,1) \cap \theta|$, we have
\begin{align*}
\frac{\Vert A_{\io|_n} |_{A_{\io|_n}^{-1}\theta}\Vert}{\alpha_1(A_{\io|_n})} &=  \frac{|A_{\io|_n}B(0,1) \cap \theta|}{|A_{\io|_n}B(0,1)\cap \theta(A_{\io|_n})^\perp|}  \\
&= \frac{|A_{\io|_n}B(0,1) \cap \theta|}{|A_{\io|_n}B(0,1)\cap \theta(\io)^\perp|} \frac{|A_{\io|_n}B(0,1)\cap \theta(\io)^\perp|}{|A_{\io|_n}B(0,1)\cap \theta(A_{\io|_n})^\perp|}.
\end{align*}
Here, for $\bmu$-almost every $\io$, we have $\lim_{n\to\infty} \frac{|A_{\io|_n}B(0,1)\cap \theta(\io)^\perp|}{|A_{\io|_n}B(0,1)\cap \theta(A_{\io|_n})^\perp|} = 1$ by Lemma \ref{lemma-suuntasuppeneekaikkialla} and $\lim_{n\to\infty} \frac{|A_{\io|_n}B(0,1) \cap \theta|}{|A_{\io|_n}B(0,1)\cap \theta(\io)^\perp|} = (\cos d(\theta, \theta(\io)^\perp))^{-1}$ by Lemma \ref{lemma-suuntasuppeneekaikkialla} and basic geometry. This completes the proof with $C(\io,\theta) = -\log (\cos d(\theta, \theta(\io)^\perp))$. 
\end{proof}

In particular, for $\bmu\times\mu_F$-almost every $(\io,\theta)\in\Gamma^\N\times\RP$,
\begin{equation}\label{eq-geomeq}
\dlp(\mu_{M^{\ell_k}(\io,\theta),k\beta+\log\alpha_1(\io|_{\ell_k})},\ \mu_{M^{\ell_k}(\io,\theta),k\beta+\log\Vert A_{\io|_{\ell_k}}|_{A_{\io|_{\ell_k}}^{-1}\theta}\Vert - C(\io,\theta)})\to 0
\end{equation}
as $k\to\infty$, where the sequence $(\ell_k)_{k\in\N} = (\ell_k(\io,\beta))_{k\in\N}$ is as in \eqref{eq-lk}. Note that for large enough $k$,  
\begin{align*}
    \ell_k &= \max \lbrace n:\ \alpha_1(A_{\io|_n}) \geq 2^{-(k-1)\beta} \rbrace\\
    &= \max \lbrace n:\ \Vert A_{\io|_n} |_{A_{\io|_n}^{-1}\theta} \Vert^{-1} \leq 2^{(k-1)\beta - C(\io,\theta)} \rbrace \\
    &= \max \lbrace n:\ -\log \Vert A_{\io|_n} |_{A_{\io|_n}^{-1} \theta}\Vert \leq (k-1)\beta - C(\io,\theta) \rbrace.
\end{align*}
Combining this with the elementary observation that
$$
\Vert A_{i_1} A_{i_2} |_{A_{i_2}^{-1} A_{i_1}^{-1}\theta}\Vert = \Vert A_{i_1} |_{A_{i_1}^{-1} \theta}\Vert \cdot \Vert A_{i_2} |_{A_{i_2}^{-1} A_{i_1}^{-1}\theta}\Vert
$$
we see that in $Z_\Phi$ we have the identity
\begin{align}\label{eq-kN}
&(\io,\theta,u, (k-1)\beta - C(\io,\theta)) \nonumber \\
=\ &(M^{\ell_k}(\io,\theta),\rho(\ell_k,(\io, \theta))u, (k-1)\beta - C(\io,\theta) + \log \Vert A_{\io_{\ell_k}}|_{A_{\io|_{\ell_k}}^{-1}\theta}\Vert )
\end{align}
for every $(\io,\theta,u)\in\Gamma^\Z \times \RP\times \lbrace -1,1\rbrace$. 

Let $F$ denote the map $Z_\Phi  \to \mathcal{P}(\R^2)$, 
\begin{equation}\label{eq-definitionF}
(\io,\theta,u,t)\mapsto 2u\mu_{\io,\theta,t+\beta-1},
\end{equation}
for every $(\io,\theta,u,t)$ with $0\leq t< f(\io,\theta)$, where $2\mu(\cdot) := \mu(2^{-1}\cdot)$. 

By \eqref{eq-kN}, we have
\begin{align}\label{eq-Fmeasure}
&F(\io,\theta,{\rm Id}, (k-1)\beta-C(\io,\theta)) \nonumber\\
=\ &2\rho(\ell_k,(\io, \theta)) \mu_{M^{\ell_k}(\io,\theta),k\beta-1+C(\io,\theta) + \log \Vert A_{\io_{\ell_k}}|_{A_{\io|_{\ell_k}}^{-1}\theta}\Vert}.
\end{align}
Let $\varepsilon>0$. If we now replace $\Phi$ by a linearly conjugate system as in Proposition \ref{prop-parempijono} and $\mu$ is any fully supported self-affine measure associated to $\Phi$, then by equations \eqref{eq-geomeq} and \eqref{eq-Fmeasure}, the following holds:

For $\bmu$-a.e. $\io\in\Gamma^\N$ and all $\theta$ in a set of positive $\mu_F$-measure, there exists a sequence of intervals $(I_k)_k$ with $|I_k|= 2^{k\beta - \lfloor k\beta\rfloor}$ and a set $\cN_\varepsilon \subseteq \N$ such that $\liminf_{n\to\infty} \frac{\#(\cN_\varepsilon\cap[0,n])}{n} \geq 1-\varepsilon$ and
\begin{equation}\label{eq-slice}
d_{\rm LP}(\pi^2 \mu^{D_{k\beta}(\Pi(\io))},\ (F(\io,\theta,1, (k-1)\beta- C(\io,\theta)))_{I_k}) <\varepsilon
\end{equation}
for every $k\in\cN_\varepsilon$. 

\subsection{Dynamics of the sequence $(\nu_{\jo|_{i_k}})_{k\in\N}$}
Let $\Psi = \lbrace \psi_j(x) = B_j x+b_j\rbrace_{j\in\Lambda}$ be an affine iterated function system satisfying the total irreducibility and hyperbolicity conditions. We remark that in this subsection, no domination or any separation condition is required. Similarly as in the previous section, our aim is to relate the sequence $(\nu_{\jo|_{i_k}})_{k\in\N}$ to an orbit of a point in $\mathcal{Z}_\Psi$. In fact, it turns out to be more useful to study the sequence $(\pi_\theta \nu_{\jo|_{i_k}})_{k\in\N}$, for every $\theta\in\RP$. As we will demonstrate in this section, this sequence is related to $Z_\Phi$ through the function $G: Z_\Psi \to \mathcal{P}(\R^2)$,
\begin{equation}\label{eq-definitionG}
(\jo, \theta,v, t) \mapsto S_{t}^* vR_\theta \pi_\theta T_{\Pi(\jo)}\nu.
\end{equation}
For any $\theta\in\RP$, $x\in\R^2$ and matrix $B$, it follows from the identity $\langle Bx,\, e_\theta\rangle = \langle x,\, B^* e_\theta\rangle$ that
\begin{equation}\label{eq-notdifficult}
\pi_\theta(Bx) = \pm R_\theta^{-1}\Vert B^*|_\theta\Vert R_{B^*\theta} \pi_{B^*\theta}(x),
\end{equation}
were the sign is negative if and only if $\langle e_{B^*\theta},\, B^* e_\theta \rangle < 0$, recalling that $e_\theta$ denotes the unit vector of $\theta$ with non-negative $y$-coordinate. In particular, 
\begin{equation}\label{eq-lengthofpitheta}
|\pi_{\theta}(B_{\jo|_k} B(0,1))| = \Vert B_{\jo|_k}^* |_\theta\Vert
= \Vert B_{j_0}^* |_\theta\Vert  \cdot \ldots \cdot \Vert B_{j_k}^* |_{B_{j_{k-1}}^* \ldots B_{j_0}^* \theta}\Vert
\end{equation}
and since $T_{\Pi(\jo)}\circ \varphi_{\jo|_k} = B_{\jo|_k} \circ T_{\Pi(\sigma^k \jo)}$, we have
\begin{align}\label{eq-representationofnu}
\pi_\theta \nu_{\jo|_{k}} &= \pi_\theta S_{k\beta}^* T_{\Pi(\jo)} \psi_{\jo|_{k}}\nu \nonumber\\
&= R_{\theta}^{-1}\rho(k, (\jo,\theta))S^*_{k\beta+\log \Vert B_{\jo|_{k}}^*|_\theta\Vert} R_{B_{\jo|_{k}}^*\theta}\pi_{B_{\jo|_{k}}^*\theta} T_{\Pi(\sigma^{k}\jo)}\nu,
\end{align}
where $\rho$ denotes the cocycle that captures the sign of the right-hand side of \eqref{eq-notdifficult} and was defined in \eqref{eq-roopsi}. From \eqref{eq-lengthofpitheta} and Lemma \ref{lemma-suuntasuppeneekaikkialla} it readily follows that
\begin{equation}\label{eq-limit*}
    \lim_{n\to\infty} \frac{\Vert B_{\jo|_n}\Vert}{\Vert B_{\jo|_n}^*|\theta\Vert} = \cos d(\theta, \theta(\jo))
\end{equation}
for $\bnu$-almost every $\jo\in\Lambda^\N$ and $\theta\neq \theta(\jo)^\perp$. In particular, recalling that 
\begin{align*}
i_k = i_k(\jo,\beta) &= \min\lbrace n\in\N:\ \Vert B_{\jo|_n}\Vert \leq 2^{-k\beta}\rbrace \\
&=\min\lbrace n\in\N:\ \Vert B_{\jo|_n}^*|\theta\Vert \leq 2^{-k\beta + \log \cos( d(\theta,\theta(\jo)))}\rbrace,
\end{align*} 
for every large enough $k$, it follows from \eqref{eq-representationofnu} and \eqref{eq-limit*} that
\begin{equation}\label{eq-propertyofG}
\lim_{k\to\infty} \dlp(\pi_\theta\nu_{\jo|_{i_k}},\ R_\theta^{-1} G(\mathcal{T}(\jo, \theta, 1, k\beta-\log \cos d(\theta,\theta(\jo))))) = 0
\end{equation}
for $\bnu$-almost every $\jo\in\Lambda^\N$ and every $\theta\neq \theta(\jo)^\perp$. 

\subsection{Proof of Claims \ref{claim-easycase}, \ref{claim-1} and \ref{claim-2}}

We now proceed to prove Claims \ref{claim-easycase}, \ref{claim-1} and \ref{claim-2} using \eqref{eq-slice}, \eqref{eq-propertyofG} and Proposition \ref{claim-ergodicproduct}. Recall that \eqref{eq-slice} holds when $\Phi$ satisfies the irreducibility, domination and strong separation conditions, while \eqref{eq-propertyofG} holds when $\Psi$ satisfies the total irreducibility and hyperbolicity conditions. 

\begin{proof}[Proof of Claim \ref{claim-easycase}]
    Let $\mu$ and $\nu$ be as in the statement of the claim. We stress that in this proof, only total irreducibility, hyperbolicity and exponential separation is assumed of both measures. It will be convenient to replace the function $\eta\mapsto H_\beta(\eta)$ by the weak-$^*$ continuous substitute $\tilde{H}_\beta: \eta\mapsto\int_0^1 H_\beta(\delta_x*\eta)\,dx$. It follows from Lemma \ref{lemma-almostcontinuity} that $|\tilde{H}_\beta(\eta) - H_\beta(\eta)| \leq O(1)$, whence any bounds for $\frac{1}{\beta}\tilde{H}_\beta$ hold also for $\frac{1}{\beta}H_\beta$, up to an additive error of $O(1/\beta)$.
    
    Recall that $\mu_{\io|_{i_k}} = S_{k\beta}^* T_{\Pi(\io)} \varphi_{\io|_{i_k}}\mu$ and $\nu_{\jo|_{i_k}} = S_{k\beta}^* T_{\Pi(\jo)} \psi_{\jo|_{i_k}} \nu$ for some $\beta>0$ specified later. By Lemma \ref{lemma-suuntasuppeneeprelminary}, any accumulation points of $S_{k\beta} T_{\Pi(\io)} \varphi_{\io|_{i_k}}$ and $S_{k\beta} T_{\Pi(\jo)} \psi_{\jo|_{i_k}}$ as $k\to\infty$ are affine maps of rank $1$, with ranges $\theta(\io)$ and $\theta(\jo)$, respectively. In particular, for $\bmu\times\bnu$-almost every $(\io,\jo)$,
    \begin{equation}\label{eq-convolutionproduct}
    \lim_{k\to\infty} \dlp(\mu_{\io|_{i_k}}*\nu_{\jo|_{i_k}},\ \pi_{\theta(\io)}\mu_{\io|_{i_k}}*\pi_{\theta(\jo)}\nu_{\jo|_{i_k}}) = 0
    \end{equation}

    Recall the definition of the function $G$ from \eqref{eq-definitionG}, and note that by Theorem \ref{theorem-selfaffineprojections}, $\dim G(\jo,\theta,v,t) = \min\lbrace 1,\dim\nu\rbrace$ for $\lambda_\Psi$-almost every $(\jo,\theta,v,t)$. This is the only part in the proof where the exponential separation is used. Since the flows $\mathcal{Z}_\Phi$ and $\mathcal{Z}_\Psi$ have at most countably many eigenvalues by Lemma \ref{preliminary-countablymanyeigenvalues}, it follows from Lemma \ref{preliminary-eigenvalueergodic} that there exist arbitrarily large real numbers $\beta>0$ such that both $\mathcal{Z}_\Phi$ and $\mathcal{Z}_\Psi$ are ergodic under the map $\mathcal{T}_\beta$. Let $\varepsilon>0$ and $N_0\in \N$. Combining Egorov's theorem with the well-known fact that $\dim \eta \leq \liminf_{\beta\to\infty} \frac{1}{\beta}H_\beta(\eta)$ for any probability measure $\eta$, Birkhoff's ergodic theorem applied for $\mathcal{T}_\beta$ for some $\beta\geq N_0$ asserts that for $\lambda_\Psi$-a.e. $(\jo,\theta,v,t)$, 
    $$
    \lim_{n\to\infty} \frac{1}{n}\sum_{k=1}^n \frac{1}{\beta} \tilde{H}_\beta(G(\mathcal{T}_{k\beta}(\jo,\theta,v,t))) > \min \lbrace 1,\dim \nu\rbrace - \varepsilon/2.
    $$
    Fix this $\beta$ for the rest of the proof. By continuity of $\tilde{H}_\beta$, this actually holds for \emph{every} $0\leq t<g(\jo,\theta)$. Moreover, we can replace $(\jo,\theta,v,t)$ by $(\jo,\theta(\jo), 1,0)$ since the orbits of $\theta$ and $\theta(\jo)$ under $\cT_{\beta}$ are asymptotic by Lemma \ref{cor-almostcontraction}. Thus, recalling \eqref{eq-propertyofG}, we have for $\bnu$-almost every $\jo\in\Lambda^\N$ that
    \begin{equation}\label{eq-entropyofnu}
    \lim_{n\to\infty} \frac{1}{n}\sum_{k=1}^n \frac{1}{\beta} \tilde{H}_\beta(\pi_{\theta(\jo)}\nu_{\jo|_{i_k}}) > \min \lbrace 1,\dim \nu\rbrace - \varepsilon/2.
    \end{equation}
    Repeating the exact same argument for $\mu$, we also find that for $\bmu$-almost every $\io\in\Gamma^\N$, 
    \begin{equation}\label{eq-entropyofmu}
    \lim_{n\to\infty} \frac{1}{n}\sum_{k=1}^n \frac{1}{\beta} \tilde{H}_\beta(\pi_{\theta(\io)}\mu_{\io|_{i_k}}) > \min \lbrace 1, \dim\mu\rbrace - \varepsilon/2
    \end{equation}
   
    Now we wish to estimate the entropy of $\mu_{\io|_{i_k}}*\nu_{\jo|_{i_k}}$ using \eqref{eq-convolutionproduct} and the estimates \eqref{eq-entropyofmu} and \eqref{eq-entropyofnu}. However, for that we need a lower bound for the angle between $\theta(\io)$ and $\theta(\jo)$. 
    
    By Lemma \ref{lemma-differentangles} and Fubini, $d(\theta(\io), \theta(\jo)) > 0$ for $\bmu\times\bnu$-almost every $(\io,\jo)$. For every $n\in\N$, let $X_n = \lbrace (\io,\jo)\in\Gamma^\N\times\Lambda^\N:\ d(\theta(\io),\theta(\jo))\geq 1/n\rbrace$, and note that $\bmu\times\bnu(\bigcup_{n\in\N} X_n) = 1$. Fix now an integer $n$, and note that for every $(\io,\jo)\in X_n$, $\pi_{\theta(\jo)^\perp}|_{\theta(\io)} = R_{\theta(\jo)^\perp}^{-1} R_{\theta(\io)} \cos d(\theta(\io), \theta(\jo)^\perp)\pi_{\theta(\io)}$ where $\cos d(\theta(\io), \theta(\jo)^\perp)\geq 1/2n$. It follows from Lemmas \ref{lemma-chainruleapplication} and \ref{lemma-entropyofconvolution} that 
    \begin{align*}
    &\tilde{H}_\beta(\pi_{\theta(\io)}\mu_{\io|_{i_k}}*\pi_{\theta(\jo)}\nu_{\jo|_{i_k}}) \\
    \geq\ &\tilde{H}_\beta(S^*_{\log \cos d(\theta(\io), \theta(\jo)^\perp)}\pi_{\theta(\io)} \mu_{\io|_{i_k}}) + \tilde{H}_\beta(\pi_{\theta(\io)}\mu_{\io|_{i_k}}*\pi_{\theta(\jo)}\nu_{\jo|_{i_k}} |\pi_{\theta(\jo)^\perp}) - O(1) \\
    \geq\ &\tilde{H}_{\beta+\log\cos d(\theta(\io), \theta(\jo)^\perp)}(\pi_{\theta(\io)}\mu_{\io|_{i_k}}) + \tilde{H}_\beta(\pi_{\theta(\jo)}\nu_{\jo|_{i_k}}|\pi_{\theta(\jo)^\perp}) - O(1) \\
    =\ &\tilde{H}_{\beta+\log\cos d(\theta(\io), \theta(\jo)^\perp)}(\pi_{\theta(\io)}\mu_{\io|_{i_k}}) + \tilde{H}_\beta(\pi_{\theta(\jo)}\nu_{\jo|_{i_k}}) - O(1)
    \end{align*}
    for every $(\io,\jo)\in X_n$ and $k\in\N$, where the last equality follows from the fact that $\pi_{\theta(\jo)}\nu_{\jo|_{i_k}}$ is supported on the line $\theta(\jo)$. Dividing by $\beta$ and using \eqref{eq-entropyofmu}, \eqref{eq-entropyofnu}, \eqref{eq-convolutionproduct} and the fact that $\cos d(\theta(\io), \theta(\jo)^\perp)\geq 1/2n$ we obtain that for $\bmu\times\bnu$-a.e. $(\io,\jo)\in X_n$,
    \begin{align*}
    \lim_{n\to\infty} \frac{1}{n}\sum_{k=1}^n \frac{1}{\beta}\tilde{H}_\beta(\mu_{\io|_{i_k}}*\nu_{\jo|_{i_k}}) \geq \min \lbrace 1,\dim\mu \rbrace + \min \lbrace 1,\dim\nu\rbrace - \varepsilon,
    \end{align*}
    as long as $\beta$ is large enough with respect to $\varepsilon$ and $n$. Since $\bmu\times\bnu(\bigcup_{n\in\N} X_n) = 1$, this completes the proof. 
\end{proof}

\begin{proof}[Proof of Claim \ref{claim-1}]

Let $\varepsilon>0$, and let $\Phi$ denote the conjugated IFS as in the statement of the claim. Let $\mu$ be a fully supported self-affine measure associated to $\Phi$ with $\dim\mu>1$. Once again, it will be convenient to replace $H_\beta$ by the continuous substitute $\tilde{H}_\beta$ defined in the proof of Claim \ref{claim-easycase}. For $\bmu\times\mu_F$ a.e. $(\io, \theta)$ we have
$$
\lim_{\beta \to \infty} \frac{1}{\beta} \tilde{H}_\beta(\mu_{\io,\theta}) = \dim \mu_{\io,\theta} = \dim \mu - 1
$$
by Theorem \ref{thm-dimensionconservation}.

Recall the definition of the function $F: Z_\Phi \to \cP(\R^2)$ from \eqref{eq-definitionF}, and that $F': Z_\Phi' \to \cP(\R^2)$ is defined by $F'(\io,\theta,t) = (\io,\theta,1,t)$. Since $\mu_{\io,\theta}$ is a.s. exact dimensional and non-atomic, by a slight modification of the proof of Lemma \ref{lemma-continuityofentropy} there exists $N_0\in\N$ and a set $S\subseteq Z_\Phi'$ with $\lambda_\Phi'(S)\geq 1-\varepsilon$ such that 
\begin{equation}\label{eq-restrictiondim}
\frac{1}{\beta} \tilde{H}_\beta((F'(\io,\theta, t))_I) \leq \dim\mu - 1 + \varepsilon
\end{equation}
for every $(\io,\theta,t)\in S$, every $\beta\geq N_0$ and every interval $I$ on the $y$-axis such that $0\in I$ and $|I|\geq1$. Since the function $t\mapsto \frac{1}{\beta} \tilde{H}_\beta((F'(\io,\theta, t))_I)$ is continuous for $\bmu\times\mu_F$-almost every $(\io,\theta)$, \eqref{eq-restrictiondim} in fact holds for \emph{every} $0 \leq t \leq f(\io,\theta)$. 

By Proposition \ref{claim-ergodicproduct}, we may choose $\beta$ so that $\mathcal{Z}_\Phi'$ is ergodic under the discrete-time map $\mathcal{T}_\beta$. Birkhoff's ergodic theorem asserts that for $\lambda_\Phi'$-almost every $(\io,\theta,t)$, 
$$
\limsup_{n\to\infty} \frac{1}{n}  \sum_{k=1}^n \frac{1}{\beta} \tilde{H}_\beta((F'(\mathcal{T}_{(k-1)\beta}(\io,\theta, t)))_{I_k}) \leq \dim \mu - 1 + \varepsilon
$$
for any intervals $(I_k)_{k=1}^n$ on the $y$-axis such that $0\in I_k$ and $|I_k|\geq 1$. As reasoned above, this in fact holds for every $t\geq0$. Moreover, since the functions $F'$ and $F$ differ only by a reflection which does not affect entropy, for $\bmu\times\mu_F$-almost every $(\io,\theta)$ we have
\begin{align*}
    &\frac{1}{n}\sum_{k=m}^{n-1} \frac{1}{\beta} \tilde{H}_\beta(F(\io,\theta,1,(k-1)\beta- C(\io,\theta))_{I_k})  \\
    =\ &\frac{1}{n}\sum_{k=m}^{n-1} \frac{1}{\beta} \tilde{H}_\beta((F'(\io,\theta,(k-1)\beta-C(\io,\theta)))_{I_k}) \\
     \leq\ &\dim\mu-1+\varepsilon
\end{align*}
for large enough $n$, where $C(\io,\theta)$ is the constant of Lemma \ref{lemma-geomlemma} and $m\in\N$ is such that $(m-1)\beta - C(\io,\theta) \geq 0$. Finally, using \eqref{eq-slice} and fixing the choice of $(I_k)_k$, we obtain
$$
\limsup_{n\to\infty} \frac{1}{n}\sum_{k=0}^{n-1} \frac{1}{\beta}\tilde{H}_\beta(\pi^2\mu^{D_{k\beta}(\Pi(\io))}) \leq \dim\mu-1+2\varepsilon
$$
for $\bmu$-almost every $\io\in\Gamma^\N$.
\end{proof}

\begin{proof}[Proof of Claim \ref{claim-2}]

Let $\varepsilon>0$, and denote by $\Phi$ and $\Psi$ the conjugated IFSs as in the statement. Let $\mu$ and $\nu$ be fully supported self-affine measures associated to $\Phi$ and $\Psi$. Let $\tilde{H}_\beta$ denote the continuous substitute of $H_\beta$ as in the proof of Claim \ref{claim-easycase}. Recall the definitions of the functions $F: Z_\Phi \to \cP(\R^2)$ and $G: Z_\Psi \to \cP(\R^2)$ from \eqref{eq-definitionF} and \eqref{eq-definitionG}. By Marstrand's projection theorem, if $\tau$ and $\kappa$ are any exact-dimensional Borel measures on the line, then for Lebesgue-almost every $t\geq 0$ we have
$$
\dim(S_t \tau * \kappa) = \min \lbrace 1,\dim \tau + \dim \kappa\rbrace.
$$
Combining this with Theorems \ref{theorem-selfaffineprojections} and \ref{thm-dimensionconservation}, we find that
\begin{equation}\label{eq-convolutiondim}
    \dim(F(\io,\theta_1,u,t_1) * G(\jo,\theta_2,v,t_2)) = \min \lbrace 1, \dim \mu - 1 +\dim \nu \rbrace
\end{equation}
for $\lambda_\Phi \times\lambda_\Psi$-almost every $(\io,\theta_1,u,t_1,\jo,\theta_2,v,t_2)$. Since $u$ and $v$ are drawn from the uniform measure on $\lbrace -1,1\rbrace$, \eqref{eq-convolutiondim} holds for $\lambda_\Phi'\times\lambda_\Psi'$-almost every $(\io,\theta_1,t_1,\jo,\theta_2,t_2)$ and every $(u,v)\in\lbrace -1,1\rbrace^2$

Now, for $\bmu\times\mu_F$-a.e. $(\io,\theta)$, any interval that contains the origin has positive $\mu_{\io,\theta}$-measure. Therefore, for any $\varepsilon> 0$, applying Lemma \ref{lemma-continuityofentropy}, Egorov's theorem and \eqref{eq-convolutiondim} gives a set $S \subseteq Z_\Phi'\times Z_\Psi'$ with $\lambda_\Phi' \times \lambda_\Psi'(S) > 1-\varepsilon$ and an integer $N_0$ such that 
\begin{align*}
    \frac{1}{\beta}\tilde{H}_\beta(F(\io,\theta_1,u,t_1)_{I} * G(\jo,\theta_2, v,t_2)) 
    &\geq\ \dim(F(\io,\theta_1,u,t_1) * G(\jo,\theta_2,v,t_2))-\varepsilon \\
    &\geq\ \min \lbrace 1, \dim\mu - 1 + \dim\nu \rbrace - \varepsilon 
\end{align*}
for every $\beta\geq N_0$, $(\io,\theta_1,t_1,\jo,\theta_2,t_2) \in S$, $(u,v)\in\lbrace -1,1\rbrace^2$ and any interval $I$ such that $0\in I$ and $|I|\geq 1$. Tt follows from Lemma \ref{lemma-almostcontinuity} that also
$$
\frac{1}{\beta}\tilde{H}_\beta(F(\io,\theta_1,u,t_1)_{I} * G(\jo,\theta_2,v, t_2)) \geq \min \lbrace 1, \dim\mu - 1 + \dim\nu \rbrace - \varepsilon
$$
for every $\beta\geq N_0$, $(\io,\theta_1,t_1,\jo,\theta_2,t_2) \in S$, $(u,v)\in\lbrace-1,1\rbrace^2$ and any interval $I$ such that $0\in I$ and $|I|\geq 1$. 

Proposition \ref{claim-ergodicproduct} asserts that the number $\beta$ can be chosen to that $\mathcal{Z}_\Phi' \times \mathcal{Z}_\Psi'$ is ergodic under $\mathcal{T}_\beta$. It follows from Birkhoff's ergodic theorem that for $\lambda_\Phi'\times\lambda_\Psi'$-almost every $(\io,\theta_1,t_1,\jo,\theta_2,t_2)$, $\lim_{n\to\infty} \frac{1}{n}\sum_{k=1}^n \textbf{1}_S(\mathcal{T}_{k\beta}(\io,\theta_1,t_1,\jo,\theta_2,t_2)) \geq 1-\varepsilon$ and consequently, 
\begin{align}\label{eq-convolutionlowerbound}
&\liminf_{n\to\infty} \frac{1}{n}\sum_{k=1}^n\frac{1}{\beta}\tilde{H}_\beta(F(\mathcal{T}_{k\beta}(\io,\theta_1,1,t_1))_{I_k} * G(\mathcal{T}_{k\beta}(\jo,\theta_2,1,t_2))) \nonumber\\
\geq\ &\min \lbrace 1, \dim\mu - 1 + \dim\nu \rbrace - 2\varepsilon.
\end{align}
Arguing as in the proofs of Claims \ref{claim-easycase} and \ref{claim-1} and using continuity of convolution, we see that the above holds for every $0\leq t_1 \leq f(\io,\theta_1)$ and $0\leq t_2\leq g(\io,\theta_2)$.

Note that $G(\jo,\theta_2,1,t_2)$ is continuous in $\theta_2$, uniformly over $(\jo,\theta_2,t_2)\in Z_\Psi$, and that for $\bnu\times\nu_F^*$-almost every $(\jo,\theta)\in\Lambda^\N\times\RP$ we have $d(B_{\jo|_k}^*\theta_2,\ B_{\jo|_k}^* 0^\perp) \to 0$ as $k\to \infty$, by Lemma \ref{cor-almostcontraction}. Therefore, we may replace $\theta_2$ by $0^\perp$ so that \eqref{eq-convolutionlowerbound} still holds. Finally, applying \eqref{eq-slice} and \eqref{eq-propertyofG} to \eqref{eq-convolutionlowerbound} with proper choices of $t_1$ and $t_2$, we obtain
$$
\liminf_{n\to\infty} \frac{1}{n}\sum_{k=0}^{n-1} \frac{1}{\beta} \tilde{H}_\beta( \pi^2 \mu^{D_{k\beta}(\Pi(\io))} * \pi^2 \nu_{\jo|_{i_k}}) \geq \min \lbrace 1, \dim \mu - 1+ \dim \nu\rbrace - 6\varepsilon
$$
for $\bmu$-almost every $\io\in\Gamma^\N$ and $\bnu$-almost every $\jo\in\Lambda^\N$.
\end{proof}

\bibliographystyle{abbrv}
\bibliography{Bibliography}

\begin{thebibliography}{10}

\bibitem{Barany2015}
B.~B{\'a}r{\'a}ny.
\newblock On the {L}edrappier-{Y}oung formula for self-affine measures.
\newblock {\em Math. Proc. Cambridge Philos. Soc.}, 159(3):405--432, 2015.

\bibitem{BaranyHochmanRapaport2019}
B.~B\'{a}r\'{a}ny, M.~Hochman, and A.~Rapaport.
\newblock Hausdorff dimension of planar self-affine sets and measures.
\newblock {\em Invent. Math.}, 216(3):601--659, 2019.

\bibitem{BaranyKaenmaki2017}
B.~B\'{a}r\'{a}ny and A.~K\"{a}enm\"{a}ki.
\newblock Ledrappier-{Y}oung formula and exact dimensionality of self-affine
  measures.
\newblock {\em Adv. Math.}, 318:88--129, 2017.

\bibitem{BaranyKaenmakiMorris2020}
B.~B\'{a}r\'{a}ny, A.~K\"{a}enm\"{a}ki, and I.~D. Morris.
\newblock Domination, almost additivity, and thermodynamic formalism for planar
  matrix cocycles.
\newblock {\em Israel J. Math.}, 239(1):173--214, 2020.

\bibitem{BaranyKaenmakiPyoralaWu2023}
B.~B\'{a}r\'{a}ny, A.~K\"{a}enm\"{a}ki, A.~Py\"{o}r\"{a}l\"{a}, and M.~Wu.
\newblock Scaling limits of self-conformal measures.
\newblock 2023.
\newblock Preprint, available at arXiv:2308.11399.

\bibitem{BaranyKaenmakiRossi2021}
B.~B\'{a}r\'{a}ny, A.~K\"{a}enm\"{a}ki, and E.~Rossi.
\newblock Assouad dimension of planar self-affine sets.
\newblock {\em Trans. Amer. Math. Soc.}, 374(2):1297--1326, 2021.

\bibitem{BaranyKaenmakiYu2021}
B.~B\'{a}r\'{a}ny, A.~K\"{a}enm\"{a}ki, and H.~Yu.
\newblock Finer geometry of planar self-affine sets.
\newblock 2021.
\newblock Preprint, available at arXiv:2107.00983.

\bibitem{BenoistQuint2016}
Y.~Benoist and J.-F. Quint.
\newblock {\em Random walks on reductive groups}, volume~62 of {\em Ergebnisse
  der Mathematik und ihrer Grenzgebiete. 3. Folge. A Series of Modern Surveys
  in Mathematics [Results in Mathematics and Related Areas. 3rd Series. A
  Series of Modern Surveys in Mathematics]}.
\newblock Springer, Cham, 2016.

\bibitem{BochiGourmelon2009}
J.~Bochi and N.~Gourmelon.
\newblock Some characterizations of domination.
\newblock {\em Math. Z.}, 263(1):221--231, 2009.

\bibitem{BougerolLacroix1985}
P.~Bougerol and J.~Lacroix.
\newblock {\em Products of random matrices with applications to
  {S}chr\"{o}dinger operators}, volume~8 of {\em Progress in Probability and
  Statistics}.
\newblock Birkh\"{a}user Boston, Inc., Boston, MA, 1985.

\bibitem{Bourgain1989}
J.~Bourgain.
\newblock Pointwise ergodic theorems for arithmetic sets.
\newblock {\em Inst. Hautes \'Etudes Sci. Publ. Math.}, (69):5--45, 1989.
\newblock With an appendix by the author, Harry Furstenberg, Yitzhak Katznelson
  and Donald S. Ornstein.

\bibitem{Bowen1975book}
R.~Bowen.
\newblock {\em Equilibrium states and the ergodic theory of {A}nosov
  diffeomorphisms}.
\newblock Lecture Notes in Mathematics, Vol. 470. Springer-Verlag, Berlin-New
  York, 1975.

\bibitem{BruceJin2022}
C.~Bruce and X.~Jin.
\newblock Furstenberg sumset conjecture and {M}andelbrot percolations.
\newblock 2022.
\newblock Preprint, available at arXiv:2211.16410.

\bibitem{CoverThomas2006}
T.~M. Cover and J.~A. Thomas.
\newblock {\em Elements of information theory}.
\newblock Wiley-Interscience [John Wiley \& Sons], Hoboken, NJ, second edition,
  2006.

\bibitem{Davies1995}
E.~B. Davies.
\newblock {\em Spectral theory and differential operators}, volume~42 of {\em
  Cambridge Studies in Advanced Mathematics}.
\newblock Cambridge University Press, Cambridge, 1995.

\bibitem{EdekoGerlachKuhner2019}
N.~Edeko, M.~Gerlach, and V.~K\"{u}hner.
\newblock Measure-preserving semiflows and one-parameter {K}oopman semigroups.
\newblock {\em Semigroup Forum}, 98(1):48--63, 2019.

\bibitem{Falconer1990}
K.~J. Falconer.
\newblock {\em Fractal geometry}.
\newblock John Wiley \& Sons, Ltd., Chichester, 1990.
\newblock Mathematical foundations and applications.

\bibitem{FanLauRao2002}
A.-H. Fan, K.-S. Lau, and H.~Rao.
\newblock Relationships between different dimensions of a measure.
\newblock {\em Monatsh. Math.}, 135(3):191--201, 2002.

\bibitem{Feng2023}
D.-J. Feng.
\newblock Dimension of invariant measures for affine iterated function systems.
\newblock {\em Duke Math. J.}, 172(4):701--774, 2023.

\bibitem{FergusonFraserSahlsten2015}
A.~Ferguson, J.~M. Fraser, and T.~Sahlsten.
\newblock Scaling scenery of {$(\times m,\times n)$} invariant measures.
\newblock {\em Adv. Math.}, 268:564--602, 2015.

\bibitem{Fraser2022}
J.~M. Fraser.
\newblock The {F}ourier spectrum and sumset type problems.
\newblock {\em Math. Ann.}, 2024.

\bibitem{Furstenberg1963}
H.~Furstenberg.
\newblock Noncommuting random products.
\newblock {\em Trans. Amer. Math. Soc.}, 108:377--428, 1963.

\bibitem{Hall2013}
B.~C. Hall.
\newblock {\em Quantum theory for mathematicians}, volume 267 of {\em Graduate
  Texts in Mathematics}.
\newblock Springer, New York, 2013.

\bibitem{Hochmanpreprint}
M.~Hochman.
\newblock Dynamics on fractals and fractal distributions.
\newblock 2010.
\newblock Preprint, available at arXiv:1008.3731.

\bibitem{Hochman2012}
M.~Hochman.
\newblock Geometric rigidity of {$\times m$} invariant measures.
\newblock {\em J. Eur. Math. Soc. (JEMS)}, 14(5):1539--1563, 2012.

\bibitem{Hochmanlecturenotes}
M.~Hochman.
\newblock Notes on {E}rgodic {T}heory. {H}ebrew {U}niversity of {J}erusalem.
\newblock 2013.
\newblock Available at
  https://math.huji.ac.il/~mhochman/courses/ergodic-theory-2012/notes.final.pdf.

\bibitem{Hochman2014}
M.~Hochman.
\newblock On self-similar sets with overlaps and inverse theorems for entropy.
\newblock {\em Ann. of Math. (2)}, 180(2):773--822, 2014.

\bibitem{Hochman2015}
M.~Hochman.
\newblock On self-similar sets with overlaps and inverse theorems for entropy
  in {$\R^d$}.
\newblock 2015.
\newblock Preprint, available at arXiv:1503.09043.

\bibitem{HochmanRapaport2021}
M.~Hochman and A.~Rapaport.
\newblock Hausdorff dimension of planar self-affine sets and measures with
  overlaps.
\newblock {\em J. Eur. Math. Soc. (JEMS)}, 2021.
\newblock To appear, available at arXiv:1904.09812.

\bibitem{HochmanShmerkin2012}
M.~Hochman and P.~Shmerkin.
\newblock Local entropy averages and projections of fractal measures.
\newblock {\em Ann. of Math. (2)}, 175(3):1001--1059, 2012.

\bibitem{KaenmakiKoivusaloRossi2017}
A.~K\"{a}enm\"{a}ki, H.~Koivusalo, and E.~Rossi.
\newblock Self-affine sets with fibred tangents.
\newblock {\em Ergodic Theory Dynam. Systems}, 37(6):1915--1934, 2017.

\bibitem{KaenmakiSahlstenShmerkin2015}
A.~K\"{a}enm\"{a}ki, T.~Sahlsten, and P.~Shmerkin.
\newblock Structure of distributions generated by the scenery flow.
\newblock {\em J. Lond. Math. Soc. (2)}, 91(2):464--494, 2015.

\bibitem{Kempton2015}
T.~Kempton.
\newblock The scenery flow for self-affine measures.
\newblock 2015.
\newblock Preprint, available at arXiv:1505.01663.

\bibitem{Moreira1998}
C.~G. T. d.~A. Moreira.
\newblock Sums of regular {C}antor sets, dynamics and applications to number
  theory.
\newblock volume~37, pages 55--63. 1998.
\newblock International Conference on Dimension and Dynamics (Miskolc, 1998).

\bibitem{NazarovPeresShmerkin2012}
F.~Nazarov, Y.~Peres, and P.~Shmerkin.
\newblock Convolutions of {C}antor measures without resonance.
\newblock {\em Israel J. Math.}, 187:93--116, 2012.

\bibitem{ParryPollicott1990}
W.~Parry and M.~Pollicott.
\newblock Zeta functions and the periodic orbit structure of hyperbolic
  dynamics.
\newblock {\em Ast\'{e}risque}, (187-188):268, 1990.

\bibitem{PeresShmerkin2009}
Y.~Peres and P.~Shmerkin.
\newblock Resonance between {C}antor sets.
\newblock {\em Ergodic Theory Dynam. Systems}, 29(1):201--221, 2009.

\bibitem{Rossi2021}
E.~Rossi.
\newblock Visible part of dominated self-affine sets in the plane.
\newblock {\em Ann. Fenn. Math.}, 46(2):1089--1103, 2021.

\bibitem{RossiShmerkin2020}
E.~Rossi and P.~Shmerkin.
\newblock On measures that improve {$L^q$} dimension under convolution.
\newblock {\em Rev. Mat. Iberoam.}, 36(7):2217--2236, 2020.

\bibitem{Schmidt1977}
K.~Schmidt.
\newblock {\em Cocycles on ergodic transformation groups}.
\newblock Macmillan Lectures in Mathematics, Vol. 1. Macmillan Co. of India,
  Ltd., Delhi, 1977.

\bibitem{Walters1982}
P.~Walters.
\newblock {\em An introduction to ergodic theory}, volume~79 of {\em Graduate
  Texts in Mathematics}.
\newblock Springer-Verlag, New York, 1982.

\end{thebibliography}

\end{document}